\def\@settitle{\begin{center}%
    \baselineskip14\p@\relax
    \bfseries
    \@title
  \end{center}%
}
\newcommand\al{\alpha}
\newcommand\Ga{\Gamma}
\newcommand\de{\delta}
\newcommand\la{\lambda}
\newcommand{\eps}{\varepsilon}
\newcommand\ka{\varkappa}
\newcommand\si{\sigma}
\newcommand\C{\mathbb C}
\newcommand\Z{\mathbb Z}
\newcommand\Y{\mathbb Y}
\newcommand\Q{\mathbb Q}
\newcommand\FF{\mathbb F}
\newcommand\A{\mathscr F}
\newcommand\B{\mathscr G}
\newcommand\HL{{\operatorname{HL}}}
\newcommand\W{{\operatorname{W}}}
\newcommand\Jack{{\operatorname{Jack}}}
\newcommand\sgn{\operatorname{sgn}}
\newcommand\const{\operatorname{const}}
\newcommand\Sym{\operatorname{Sym}}
\newcommand\Symd{\widehat{\Sym}_\FF}
\newcommand\Tab{\operatorname{Tab}}
\newcommand\RTab{\operatorname{RTab}}
\newcommand\wt{\widetilde}
\newcommand\wh{\widehat}
\newcommand\one{\mathbf1}
\newcommand\ccdot{\,\cdot\,}
\newcommand\Id{H}
\newcommand\Idm{\wt H}
\newcommand\Wm{\mathcal W}
\newcommand\spec{\eta}
\newcommand\kk{\mathbf k}
\newtheorem{theorem}{Theorem}[section]
\newtheorem{proposition}[theorem]{Proposition}
\newtheorem{lemma}[theorem]{Lemma}
\newtheorem{corollary}[theorem]{Corollary}
\theoremstyle{definition}
\newtheorem{definition}[theorem]{Definition}
\newtheorem{remark}[theorem]{Remark}
\newtheorem{example}[theorem]{Example}
\numberwithin{equation}{section}
\begin{document}

\title[]{\large Interpolation Macdonald polynomials and\\ Cauchy-type identities}

\author{Grigori Olshanski}
\address{Institute for Information Transmission Problems of the Russian Academy of Sciences, Bolshoy Karetny 19, Moscow 127051, Russia; \newline \indent Skolkovo Institute of Science and Technology, Nobel Street 3, Moscow 121205, Russia; \newline \indent National Research University Higher School of Economics, Myasnitskaya 20, Moscow 101000, Russia}
\email{olsh2007@gmail.com}

\begin{abstract}

Let $\Sym$ denote the algebra of symmetric functions and $P_\mu(\ccdot;q,t)$ and $Q_\mu(\ccdot;q,t)$ be the Macdonald symmetric functions (recall that they differ by scalar factors only). The $(q,t)$-Cauchy identity 
$$
\sum_\mu P_\mu(x_1,x_2,\dots;q,t)Q_\mu(y_1,y_2,\dots;q,t)=\prod_{i,j=1}^\infty\frac{(x_iy_jt;q)_\infty}{(x_iy_j;q)_\infty}
$$
expresses the fact that the $P_\mu(\ccdot;q,t)$'s form an orthogonal basis in $\Sym$ with respect to a special scalar product $\langle\ccdot,\ccdot\rangle_{q,t}$. The present paper deals with the inhomogeneous \emph{interpolation} Macdonald symmetric functions 
$$
I_\mu(x_1,x_2,\dots;q,t)=P_\mu(x_1,x_2,\dots;q,t)+\text{lower degree terms}.
$$ 
These functions come from the $N$-variate interpolation Macdonald  polynomials, extensively studied in the 90's by Knop, Okounkov, and Sahi. The goal of the paper is to construct symmetric functions $H_\mu(\ccdot;q,t)$ with the biorthogonality property 
$$
\langle I_\mu(\ccdot;q,t),  H_\nu(\ccdot;q,t)\rangle_{q,t}=\de_{\mu\nu}.
$$
These new functions live in a natural completion $\wh\Sym\supset\Sym$. As a corollary one obtains a new Cauchy-type identity in which the interpolation Macdonald polynomials are paired with certain multivariate rational symmetric functions. The degeneration of this identity in the Jack limit $(q,t)=(q,q^\kk)\to(1,1)$  is also described. 

\end{abstract}

\keywords{Symmetric functions, Macdonald polynomials, interpolation polynomials, Jack polynomials, biorthogonal systems, Cauchy identity}

\maketitle

\tableofcontents

\section{Introduction}\label{sect1}

\subsection{Preface}
One of the fundamental formulas in the theory of symmetric functions is the Cauchy identity
$$
\sum_{\mu\in\Y}s_\mu(x_1,x_2,\dots)s_\mu(y_1,y_2,\dots)=\prod_{i=1}^\infty\prod_{j=1}^\infty\frac1{1-x_iy_j}.
$$
Here $\Y$ is the set of partitions and $s_\mu(\ccdot)$ denotes the Schur symmetric function with index $\mu\in\Y$. The elements $s_\mu(\ccdot)$ form a distinguished orthonormal basis of the algebra of symmetric functions with respect to the canonical scalar product $\langle\ccdot,\ccdot\rangle$, and the right-hand side of the Cauchy identity is the reproducing kernel for this scalar product. 

Macdonald's $(q,t)$-deformation of the Cauchy identity \cite[Ch. VI, (4.13)]{M} has the form 
\begin{equation}\label{eq1.A}
\sum_{\mu\in\Y}P_\mu(x_1,x_2,\dots;q,t)Q_\mu(y_1,y_2,\dots;q,t)=\prod_{i=1}^\infty\prod_{j=1}^\infty\frac{(x_iy_jt;q)_\infty}{(x_iy_j;q)_\infty},
\end{equation}
where the $P_\mu(\ccdot;q,t)$ are the Macdonald symmetric functions with parameters $(q,t)$, $Q_\mu(\ccdot;q,t)$ differs from $P_\mu(\ccdot;q,t)$ by a scalar factor, and 
$$
(z;q)_\infty:=\prod_{n=0}^\infty(1-zq^n)
$$ 
is the conventional notation for the $q$-Pochhammer symbol. The elements $P_\mu(\ccdot;q,t)$ form an orthogonal basis with respect to the $(q,t)$-deformed scalar product $\langle\ccdot,\ccdot\rangle_{q,t}$ (see \cite[Ch. VI, (1.5)]{M}), and 
$$
Q_\mu(\ccdot;q,t)=\frac{P_\mu(\ccdot;q,t)}{\langle P_\mu(\ccdot;q,t),P_\mu(\ccdot;q,t)\rangle_{q,t}},
$$
so that
$$
\langle P_\mu(\ccdot;q,t),Q_\nu(\ccdot;q,t)\rangle_{q,t}=\de_{\mu\nu}.
$$ 

The basis $\{P_\mu(\ccdot;q,t)\}_{\mu\in\Y}$ is homogeneous; associated to it is an inhomogeneous basis whose elements 
$$
I_\mu(x_1,x_2,\dots;q,t)=P_\mu(x_1,x_2\dots;q,t)+\text{lower degree terms}
$$
are the \emph{$(q,t)$-interpolation symmetric functions}. These functions are built from the $N$-variate interpolation symmetric polynomials, which were studied in the works of Knop \cite{Knop}, Okounkov \cite{Ok-MRL}, \cite{Ok-CM}, \cite{Ok-AAM}, \cite{Ok-FAA}, and Sahi \cite{Sahi2}. 

The purpose of the present paper is to study the \emph{dual functions} 
$$
\Id_\mu(y_1,y_2,\dots;q,t)=Q_\mu(y_1,y_2,\dots;q,t)+\text{higher degree terms},
$$
which form, together with the interpolation symmetric functions $I_\mu(\ccdot;q,t)$, a biorthogonal system. That is, 
$$
\langle I_\mu(\ccdot;q,t),\Id_\nu(\ccdot;q,t)\rangle_{q,t}=\de_{\mu\nu}.
$$  

The biorthogonality property is equivalent to the Cauchy-type identity
\begin{equation}\label{eq1.C}
\sum_{\mu\in\Y}I_\mu(x_1,x_2,\dots;q,t)\Id_\mu(y_1,y_2,\dots;q,t)=\prod_{i=1}^\infty\prod_{j=1}^\infty\frac{(x_iy_jt;q)_\infty}{(x_iy_j;q)_\infty},
\end{equation}
with the same right-hand side as in \eqref{eq1.A}.

The functions $H_\mu(\ccdot; q,t)$ do not fit into the algebra of symmetric functions;  they lie in the completion of this algebra consisting of arbitrary symmetric formal power series in $y_1,y_2,\dots$, not necessarily of bounded degree.  

Sometimes it is convenient to work with the \emph{modified dual functions}
$$
\Idm_\mu(u_1,u_2,\dots;q,t):=\Id_\mu(u^{-1}_1,u^{-1}_2,\dots;q,t)\prod_{j=1}^\infty(u_j^{-1};q)_\infty,
$$ 
which are formal power series in $u_1^{-1}, u_2^{-1},\dots$\,. We also introduce the truncated functions
$$
\Idm_{\mu\mid N}(u_1,\dots,u_N;q,t):=\Idm_\mu(u_1,\dots,u_N, \infty,\infty,\dots; q,t),  \qquad \mu\in\Y(N).
$$ 
They turn out to be rational functions, and it was for this purpose that the extra factors $(u_j^{-1};q)_\infty$ have been added. 

The passage to power series in inverse powers of variables is rather a matter of taste; we do it in order to follow the notation of \cite{OO-AA} and \cite{OO-IMRN}.

\subsection{The results}

Here is a brief formulation of the main results. 

\smallskip

(1)  Theorem \ref{thm5.A} gives an explicit  \emph{combinatorial formula} for the dual functions:  $\Idm_\mu(u_1,u_2,\dots;q,t)$ is written as a weighted sum over the semistandard tableaux of shape $\mu$. As a corollary we obtain the fact mentioned above:  the truncated functions $\Idm_{\mu\mid N}(u_1,\dots,u_N;q,t)$ are rational  (Corollary \ref{cor5.B}).  

\smallskip
 
(2) Theorem \ref{thm7.A} shows that the truncated functions $\Idm_{\mu\mid N}(u_1,\dots,u_N;q,t)$ satisfy a system of $q$-difference equations. Namely, we exhibit a collection $D^1_N(q,t)$, \dots, $D^N_N(q,t)$ of  \emph{commuting partial $q$-difference operators} in variables $u_1,\dots,u_N$, such that the $\Idm_{\mu\mid N}(u_1,\dots,u_N;q,t)$ are their common eigenvectors:
$$
\left(1+\sum_{k=1}^N D^k_N(q,t)z^k\right)\Idm_{\mu\mid N}(u_1,\dots,u_N;q,t)=\prod_{i=1}^N(1+q^{\mu_i}t^{1-i}z)\cdot \Idm_{\mu\mid N}(u_1,\dots,u_N;q,t).
$$
This provides a characterization of the functions $\Idm_{\mu\mid N}(u_1,\dots,u_N;q,t)$. 
\smallskip
 
(3) Theorem \ref{thm8.A} gives a Cauchy-type identity for the \emph{interpolation Jack polynomials}. At a heuristical level, this identity can be derived from \eqref{eq1.C}  via a limit transition as $(q,t)\to(1,1)$ along the curve $t = q^\kk$, where $\kk$ is the Jack parameter. However, the justification of this procedure presents difficulties and we give another proof, based on difference equations.

\subsection{Examples and comments} 

A simple consequence of the combinatorial formula of Theorem \ref{thm5.A} is an explicit expression for the functions $\Idm_{\mu\mid N}(u_1,\dots,u_N;q,t)$ in the particular case $N=1$ (then we have $\mu=(m)$, where $m=1,2,\dots$):
$$
\Idm_{(m)\mid 1}(u;q,t)=\frac{(t;q)_m}{(q;q)_m}\,\frac1{(u-q^{-1})\dots(u-q^{-m})}.
$$

As a particular case of the Cauchy identity \eqref{eq1.C} one obtains a generating function for the one-row interpolation functions,
\begin{equation}\label{eq2.D1}
1+\sum_{m=1}^\infty \frac{(t;q)_m}{(q;q)_m}\frac{I_{(m)}(x_1,x_2,\dots;q,t)}{(u-q^{-1})\dots(u-q^{-m})}=(u^{-1};q)_\infty\prod_{i=1}^\infty\frac{(x_iu^{-1}t;q)_\infty}{(x_iu^{-1};q)_\infty},
\end{equation}
which first appeared, in a different form,  in Okounkov \cite[(2.10)]{Ok-MRL} (see also Proposition \ref{prop2.B} below). Its Jack analogue has the form 
\begin{equation}\label{eq1.D2}
1+\sum_{m=1}^\infty \frac{Q^*_{(m)}(x_1,x_2,\dots;\kk)}{u(u-1)\dots(u-m+1)}=\prod_{i=1}^\infty\frac{\Ga(x_i-u-\kk i)}{\Ga(x_i-u-\kk i+\kk)}\frac{\Ga(-u-\kk i+\kk)}{\Ga(-u-\kk i)},
\end{equation}
where $Q^*_{(m)}(\ccdot;\kk)$ is the shifted interpolation analogue of the one-row Jack symmetric function $Q_{(m)}(\ccdot;\kk)$ (see \cite[(2.10)]{OO-IMRN}). In the case $\kk=1$ the latter formula turns into
\begin{equation}\label{eq1.D3}
1+\sum_{m=1}^\infty\frac{h^*_m(x_1,x_2,\dots)}{u(u-1)\dots(u-m+1)}=\prod_{i=1}^\infty\frac{u+i}{u-x_i+i},
\end{equation}
where the $h^*_m$ are the one-row shifted Schur functions (see \cite[Theorem 12.1]{OO-AA}). 

In \cite{Ols-Notes}, I derived an extension of formula \eqref{eq1.D3} to the case of many variables $u_1, u_2,\dots$ providing a Cauchy-type identity (it was reproduced and exploited in Borodin--Olshanski \cite[Proposition 4.1]{BO-AM}).  Then much more general formulas, referring to multiparameter Schur polynomials, were independently found by Molev \cite{Molev}. These results  have served as motivation for the present work: I would like to understand what kind of non-polynomial symmetric functions may arise from Cauchy-type identities with two Macdonald's parameters $(q,t)$. 

A different family of rational symmetric functions earlier appeared in Borodin--Olshanski \cite{BO-AM} in the context of asymptotic theory of characters. Yet another family was recently introduced by Borodin \cite{B-AM} in connection with integrable lattice models;  more general functions are presented in Borodin--Petrov \cite{BP-SM}, \cite{BP-Lect}. Borodin's symmetric functions are a rational deformation of the Hall--Littlewood polynomials and have a number of nice properties including the existence of a dual set of rational functions and a Cauchy-type identity. 

However, it is not clear for me if all these examples can be united in the framework of some general theory. 

There are links between the present paper and the work of Cuenca \cite{Cuenca}. The main result of \cite{Cuenca} is an explicit realization of an infinite family of commuting operators, which act on the algebra of symmetric functions and are diagonalized in the basis $\{I_\mu(\ccdot;q,t)\}$. These operators are large-$N$ limits of (slightly renormalized) $q$-difference operators considered below in Section \ref{sect7}. They are analogues of  Nazarov--Sklyanin's ``Macdonald operators at infinity'' \cite{NS-2014}, which are diagonalized in the homogeneous basis $\{P_\mu(\ccdot;q,t)\}$. For the latter operators, Nazarov and Sklyanin derived a nice expression in terms of the Hall-Littlewood symmetric functions. The formula obtained by Cuenca  has a similar structure, but it involves certain inhomogeneous versions of Hall--Littlewood functions. These inhomogeneous functions turn out to be the limiting functions $\lim\limits_{q\to0}I_\la(\ccdot; q^{-1},t^{-1})$  discussed in Section \ref{sect9} below. 

Finally, note that Cauchy identities related to Okounkov's $BC$-type interpolation polynomials \cite{Ok-TG-1998} are derived in Rains \cite[section 6]{Rains}. However, in type $BC$ the picture is much more complicated than in type $A$.

\subsection{Organization of the paper}
In Section \ref{sect2} we formulate a few necessary results from the theory of $N$-variate interpolation polynomials with Macdonald's parameters $(q,t)$, developed in the works of Knop \cite{Knop}, Okounkov \cite{Ok-MRL}, \cite{Ok-CM}, \cite{Ok-AAM}, \cite{Ok-FAA}, and Sahi \cite{Sahi2}.  Then we explain how to extend these results to the case of infinitely many variables. 

In Section \ref{sect3} we introduce the dual functions, and in Section \ref{sect4} we discuss the special case of equal parameters $t=q$ when the Macdonald symmetric functions reduce to the Schur functions.

In Section \ref{sect5} we derive the combinatorial formula for the dual functions. The key computation is based on an adaptation of an argument from Rains' paper \cite{Rains}. 

In Sections \ref{sect6} and \ref{sect7}, we find analogues of Macdonald's $q$-difference operators, which are related to the dual functions. We examine first the special case $t=q$ and then pass to the general case using a trick from Macdonald's book.  

In Section \ref{sect8} we deal with the Jack degeneration and prove Theorem \ref{thm8.A} mentioned above (a version of Cauchy identity for the interpolation Jack polynomials). 

Section \ref{sect9} is a complement: here we consider two degenerations which appear in the limits  $t\to0$ (the $q$-Whittaker limit) and  $q\to0$ (the Hall--Littlewood limit).

\subsection{Acknowledgments}
I am grateful to Alexei Borodin and Cesar Cuenca for valuable comments. 

\section{Interpolation polynomials and functions}\label{sect2}

\subsection{Preliminaries}

Unless otherwise stated, we are working over the base field $\FF:=\Q(q,t)$,  
where $q$ and $t$ are treated as formal variables. Let us fix some notation. 

$\bullet$ $\Sym$ is the algebra of symmetric functions over $\Q$ and $\Sym_\FF:=\Sym\otimes_\Q\FF$.

$\bullet$ $\Y$ is the set of all partitions, which are identified with the corresponding Young diagrams. The length of a partition $\mu\in\Y$ (that is, the number of its nonzero parts) is denoted by $\ell(\mu)$. The subset $\Y(N)\subset\Y$ consists of the partitions with $\ell(\ccdot)\le N$.

$\bullet$ $P_\mu(x_1,x_2,\dots;q,t)$ is the Macdonald symmetric function with index $\mu\in\Y$. We often suppress the arguments and write $P_\mu(\ccdot;q,t)$. 

$\bullet$ $\Sym(N)$ and $\Sym_\FF(N)$ are the algebras of symmetric polynomials in $N$ variables over $\Q$ and $\FF$, respectively.

$\bullet$ $P_{\mu\mid N}(x_1,\dots,x_N;q,t)$ or $P_{\mu\mid N}(\ccdot;q,t)$ is the Macdonald symmetric polynomial; here the index $\mu$ is assumed to be in $\Y(N)$, otherwise the polynomial is zero. With this convention,
$$
P_{\mu\mid N}(x_1,\dots,x_N;q,t)=P_\mu(x_1,\dots,x_N,0,0,\dots;q,t).
$$

\begin{remark}\label{Pochhammer}
The infinite product 
$$
(z;q)_\infty:=\prod_{n=0}^\infty(1-zq^n)
$$
is initially defined as a formal power series in two indeterminates $z$ and $q$ (Macdonald \cite[ch. VI, \S2]{M}). However, due to the identity
$$
\prod_{n=0}^\infty(1-zq^n)=\exp\left(\log\left(\prod_{n=0}^\infty(1-zq^n)\right)\right)=\exp\left(-\sum_{m=1}^\infty\frac1m\,\frac{z^m}{1-q^m}\right)
$$
we may (and do!) treat $(z;q)_\infty$ as a power series in $z$ with coefficients in $\Q(q)$. This convention is used throughout the paper. In particular, one has the following identity (which will be used in Section \ref{sect9})
\begin{equation}\label{inversion}
(z;q^{-1})_\infty=\frac1{(zq;q)_\infty}.
\end{equation}
Indeed, 
$$
(z;q^{-1})_\infty=\exp\left(-\sum_{m=1}^\infty\frac1m\,\frac{z^m}{1-q^{-m}}\right)=\exp\left(\sum_{m=1}^\infty\frac1m\,\frac{(zq)^m}{1-q^m}\right)=\frac1{(zq;q)_\infty}.
$$
\end{remark}

\subsection{Interpolation Macdonald polynomials}
Here we rewrite in our notation a number of results due to Knop, Okounkov, and Sahi. 

Fix $N\in\{1,2,\dots\}$. Given $\la\in\Y(N)$, we set
$$
X_N(\la):=(q^{-\la_1}, q^{-\la_2}t, \dots, q^{-\la_N}t^{N-1})\in\FF^N,
$$
that is, the $i$th coordinate of $X_N(\la)$ equals $q^{-\la_i}t^{i-1}$. The vectors $X_N(\la)$ are regarded as \emph{interpolation nodes}.

\begin{proposition}\label{prop2.C}
For every $\mu\in\Y(N)$ there exists a unique, within a scalar factor, element $I_{\mu\mid N}(\ccdot;q,t)\in\Sym_\FF(N)$ such that{\rm:}

{\rm(1)} $\deg I_{\mu\mid N}(\ccdot;q,t)=|\mu|$,  

{\rm(2)} $I_{\mu\mid N}(X_N(\la);q,t)=0$ for all $\la\ne\mu$ with $|\la|\le|\mu|$, 

{\rm(3)}  $I_{\mu\mid N}(X_N(\mu);q,t)\ne0$.
\end{proposition}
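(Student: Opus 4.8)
The plan is to deduce the whole statement from one \emph{vanishing lemma} by a routine dimension count, and then to prove that lemma by a double induction on the number of variables $N$ and the degree $d$. Fix $d\ge0$ and let $\Sym_\FF(N)^{\le d}\subset\Sym_\FF(N)$ be the subspace of symmetric polynomials of degree $\le d$; the monomial symmetric polynomials $m_\la$ with $\la\in\Y(N)$, $|\la|\le d$, form a basis, so $\dim\Sym_\FF(N)^{\le d}=\#\{\la\in\Y(N):|\la|\le d\}$. Consider the evaluation map $\mathrm{ev}_d\colon f\mapsto\bigl(f(X_N(\la))\bigr)_{\la\in\Y(N),\,|\la|\le d}$ from $\Sym_\FF(N)^{\le d}$ to $\FF^{\{\la\in\Y(N):|\la|\le d\}}$. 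Source and target have the same finite dimension, so $\mathrm{ev}_d$ is an isomorphism as soon as it is injective, which is exactly the assertion: \emph{if $f\in\Sym_\FF(N)$ has $\deg f\le d$ and $f(X_N(\la))=0$ for all $\la\in\Y(N)$ with $|\la|\le d$, then $f=0$.}

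Granting this vanishing lemma for all $N$ and $d$, the proposition follows at once. Put $d=|\mu|$ and let $I_{\mu\mid N}(\ccdot;q,t):=\mathrm{ev}_d^{-1}(\one_{\{\mu\}})$ be the preimage of the indicator vector of $\{\mu\}$; then $I_{\mu\mid N}(X_N(\la))=\de_{\la\mu}$ for $|\la|\le d$, which gives properties (2) and (3). It lies in $\Sym_\FF(N)^{\le|\mu|}$; were its degree $\le|\mu|-1$, it would vanish at $X_N(\la)$ for all $\la$ with $|\la|\le|\mu|-1$ (all such $\la$ differ from $\mu$), hence be zero by injectivity of $\mathrm{ev}_{|\mu|-1}$, contradicting (3); so (1) holds. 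For uniqueness, if $g$ satisfies (1)--(3), then $g\in\Sym_\FF(N)^{\le|\mu|}$ and $\mathrm{ev}_{|\mu|}(g)=c\,\one_{\{\mu\}}$ with $c=g(X_N(\mu))\ne0$, whence $g=c\,I_{\mu\mid N}$ by injectivity.

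I would then prove the vanishing lemma by induction on $N$ and, for fixed $N$, on $d$. For $N=1$ the relevant nodes are the $d+1$ pairwise distinct points $1,q^{-1},\dots,q^{-d}$, and a univariate polynomial of degree $\le d$ vanishing at $d+1$ points is zero. For $N\ge2$, specialize the last variable to $x_N=t^{N-1}$: since $\bigl(X_{N-1}(\nu),\,t^{N-1}\bigr)=X_N(\bar\nu)$, where $\bar\nu\in\Y(N)$ is $\nu$ with a zero appended, the symmetric polynomial $g_0(x_1,\dots,x_{N-1}):=f(x_1,\dots,x_{N-1},t^{N-1})$ has degree $\le d$ and vanishes at $X_{N-1}(\nu)$ for all $\nu\in\Y(N-1)$ with $|\nu|\le d$; by the induction hypothesis in $N-1$ variables $g_0\equiv0$. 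Hence $f$ vanishes on $\{x_N=t^{N-1}\}$, and by symmetry $f$ is divisible by $\prod_{i=1}^N(x_i-t^{N-1})$, say $f=\prod_{i=1}^N(x_i-t^{N-1})\,f_1$ with $f_1\in\Sym_\FF(N)$, $\deg f_1\le d-N$. If $d<N$ this forces $f_1=0$. If $d\ge N$, observe that for $\la\in\Y(N)$ with $\ell(\la)=N$ none of the factors $X_N(\la)_i-t^{N-1}=q^{-\la_i}t^{i-1}-t^{N-1}$ vanishes (a Laurent monomial $q^{-\la_i}t^{i-1}$ equals $t^{N-1}$ only when $i=N$ and $\la_N=0$), so $f_1(X_N(\la))=0$ for all such $\la$ with $|\la|\le d$; writing $\la=\rho+(1^N)$ with $\rho\in\Y(N)$ one has $X_N(\la)=q^{-1}X_N(\rho)$, so the rescaled symmetric polynomial $h(y_1,\dots,y_N):=f_1(q^{-1}y_1,\dots,q^{-1}y_N)$ has degree $\le d-N$ and vanishes at $X_N(\rho)$ for all $\rho\in\Y(N)$ with $|\rho|\le d-N$; by the induction hypothesis in $d$ (as $d-N<d$), $h\equiv0$, hence $f_1\equiv0$ and $f\equiv0$.

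The essential point — and the only real obstacle — is the inductive step of the vanishing lemma: one has to pick the specialization correctly ($x_N=t^{N-1}$, the value dictated by partitions with vanishing last part), spot the divisibility by $\prod_i(x_i-t^{N-1})$, and check that the residual problem for $f_1$ at the full-length nodes, after rescaling by $q^{-1}$, is again an instance of the vanishing lemma for the \emph{same} $N$ with the degree bound lowered by exactly $N$, so that the induction on $d$ terminates; everything else is bookkeeping. (A posteriori, once existence is known, one also has the stronger \emph{extra vanishing} $I_{\mu\mid N}(X_N(\la))=0$ unless $\mu\subseteq\la$, which makes the evaluation matrix triangular, and the $q$-difference operators of Section~\ref{sect7} give yet another characterization; but neither can be invoked here without circularity.)
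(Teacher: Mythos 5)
Your argument is correct. The paper itself does not prove this proposition directly but instead cites Okounkov's notion of a nondegenerate grid \cite[Definition 2.1, Proposition 2.3]{Ok-AAM} (which in turn follows Sahi's argument from \cite{Sahi1}); what you have done is supply a self-contained proof that the grid $\{X_N(\la)\}$ is nondegenerate. The key ingredients — the dimension count turning injectivity of $\mathrm{ev}_d$ into bijectivity, the specialization $x_N=t^{N-1}$ that reproduces the $(N-1)$-variable grid, divisibility by $\prod_i(x_i-t^{N-1})$ via symmetry, and the rescaling $\la=\rho+(1^N)\leftrightarrow X_N(\la)=q^{-1}X_N(\rho)$ reducing the degree bound by exactly $N$ — are correct, and the double induction (outer on $N$, inner on $d$) is well-founded as stated. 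You also correctly note that the extra vanishing property and the $q$-difference operators cannot be invoked here without circularity. The one small thing worth making explicit is that the step ``$q^{-\la_i}t^{i-1}=t^{N-1}$ forces $i=N$ and $\la_i=0$'' relies on $q$ and $t$ being algebraically independent over $\Q$, which is why one works over $\FF=\Q(q,t)$; you implicitly use this but it is the crux of nondegeneracy. Compared to the paper, your route buys the reader a concrete, elementary proof at the cost of not exhibiting the more general framework (arbitrary nondegenerate grids) that Okounkov's cited result provides.
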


\begin{proof} 
The set $\{X_N(\la): \la\in\Y(N)\}\subset\FF^N$ is a nondegenerate grid  in the sense of Definition 2.1 in Okounkov \cite{Ok-AAM}. Then the desired claim follows from \cite[Proposition 2.3]{Ok-AAM}. (Note that the proof of that proposition follows the argument due to Sahi \cite{Sahi1}.) 
\end{proof}

\begin{proposition}\label{prop2.D}
{\rm(i)} The leading homogeneous component of  $I_{\mu\mid N}(\ccdot;q,t)$ is proportional to $P_{\mu\mid N}(\ccdot;q,t)$.  

{\rm(ii)} If a diagram $\la\in\Y(N)$ does not contain $\mu$, then $I_{\mu\mid N}(X_N(\la);q,t)=0$. 
\end{proposition}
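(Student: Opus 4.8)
The plan is to derive both assertions from the explicit tableau (combinatorial) formula for the $N$-variate interpolation Macdonald polynomials, due to Okounkov \cite{Ok-CM} and Sahi \cite{Sahi2} — the $I_{\mu\mid N}$-counterpart of the combinatorial formula established later in Theorem \ref{thm5.A} for the dual functions. For part~(i): this formula writes $I_{\mu\mid N}(\ccdot;q,t)$ as a weighted sum over (reverse) semistandard tableaux $T$ of shape $\mu$ with entries in $\{1,\dots,N\}$, each summand being a scalar weight $\psi_T(q,t)$ times a product of affine-linear factors, one per box $s\in\mu$, the factor attached to $s$ depending only on the position of $s$ and on the single variable indexed by the entry of $T$ at $s$. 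The weights $\psi_T(q,t)$ are exactly those occurring in Macdonald's combinatorial formula for $P_{\mu\mid N}(\ccdot;q,t)$ (\cite[Ch.~VI]{M}), and the leading homogeneous part of each summand reproduces, up to one common scalar, the corresponding term of that formula; summing over $T$ identifies the leading homogeneous component of $I_{\mu\mid N}(\ccdot;q,t)$ with $P_{\mu\mid N}(\ccdot;q,t)$ up to a scalar, which is~(i). It is worth noting that some structural input of this sort is unavoidable: Proposition \ref{prop2.C} alone only shows, by a dimension count, that the leading homogeneous components of the $I_{\la\mid N}(\ccdot;q,t)$ with $|\la|=|\mu|$ form \emph{a} basis of the space of homogeneous symmetric polynomials of degree $|\mu|$ in $N$ variables, which does not single out $P_{\mu\mid N}(\ccdot;q,t)$ among them.

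For part~(ii) I would first reduce to an existence statement. Suppose $f\in\Sym_\FF(N)$ is nonzero, $\deg f\le|\mu|$, and $f(X_N(\la))=0$ for every $\la\in\Y(N)$ with $\la\not\supseteq\mu$. Because $\la\supseteq\mu$ together with $|\la|\le|\mu|$ forces $\la=\mu$, this hypothesis already covers all $\la\ne\mu$ with $|\la|\le|\mu|$; and, invoking the nondegeneracy of the grid $\{X_N(\la):\la\in\Y(N)\}$ used in the proof of Proposition \ref{prop2.C}, one checks that $f$ must then have degree exactly $|\mu|$ and be nonzero at $X_N(\mu)$. Thus $f$ satisfies conditions (1)--(3) of Proposition \ref{prop2.C}, so $f$ is proportional to $I_{\mu\mid N}(\ccdot;q,t)$; hence (ii) is \emph{equivalent} to the existence of such an $f$, the genuinely new content being the vanishing at nodes $X_N(\la)$ with $|\la|>|\mu|$. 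One such $f$ is supplied by the combinatorial formula: if $\la\not\supseteq\mu$, some box of $\mu$ lies outside $\la$, and one checks that for every tableau $T$ one of the affine-linear factors of the $T$-summand vanishes at $X_N(\la)$, so $I_{\mu\mid N}(X_N(\la);q,t)=0$. Alternatively, $f$ can be built by induction on $N$: for $N=1$ one has $I_{(m)\mid 1}(x;q,t)\propto\prod_{l=0}^{m-1}(x-q^{-l})$, whose roots are precisely the nodes $X_1((l))$ with $(l)\not\supseteq(m)$; and the inductive step specializes $x_N$ to a coordinate of the relevant node and uses the branching of $I_{\mu\mid N}$ onto interpolation polynomials in $x_1,\dots,x_{N-1}$.

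The step I expect to be the main obstacle is part~(i): there is no shortcut to it through Proposition \ref{prop2.C} by itself, so genuine structure must be used. Besides the combinatorial formula, a second route is the family of commuting $q$-difference operators of Knop and Sahi \cite{Knop} on $\Sym_\FF(N)$: these preserve the filtration by degree, have Macdonald's $q$-difference operators as their top-degree parts, and are simultaneously diagonalized by the $I_{\mu\mid N}(\ccdot;q,t)$ with the same eigenvalues that Macdonald's operators assign to $P_{\mu\mid N}(\ccdot;q,t)$; equating leading homogeneous parts in the eigenrelations exhibits the leading component of $I_{\mu\mid N}(\ccdot;q,t)$ as a joint eigenfunction of Macdonald's operators with the eigenvalue of $P_{\mu\mid N}(\ccdot;q,t)$, and simplicity of that joint spectrum in each fixed degree gives~(i). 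For (ii), the corresponding subtlety is exactly the vanishing on the enlarged node set $\{X_N(\la):|\la|>|\mu|,\ \la\not\supseteq\mu\}$ — the Knop--Sahi--Okounkov extra-vanishing phenomenon — which lies beyond what Proposition \ref{prop2.C} gives directly.
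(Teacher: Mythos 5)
The paper does not actually prove Proposition~\ref{prop2.D}; it merely cites Knop, Sahi, and Okounkov. Your proposal supplies a real argument, so it necessarily takes a different route from the paper's (since the paper takes none). Your route is essentially the one implicit in Okounkov's work \cite{Ok-CM}, and it is substantially correct, but two points deserve flagging.

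First, the logical ordering within the paper is: Proposition~\ref{prop2.C} $\to$ Proposition~\ref{prop2.D}(i) $\to$ normalization~\eqref{eq2.A} $\to$ Proposition~\ref{prop2.H} (the combinatorial formula~\eqref{eq2.E}). So if one takes Proposition~\ref{prop2.H} \emph{as stated in the paper}, one is already using~\eqref{eq2.A} and hence (i). Your proof is non-circular only if the combinatorial formula is taken as an independent external result: one defines an explicit polynomial by the right-hand side of~\eqref{eq2.E}, verifies directly that it satisfies conditions (1)--(3) of Proposition~\ref{prop2.C} and the extra vanishing, and only then identifies it with $I_{\mu\mid N}$ via the uniqueness from Proposition~\ref{prop2.C}. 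You gesture at this, but it is worth stating explicitly, since otherwise a reader of the paper in the given order would see a circle.

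Second, the assertion in (ii) that ``for every tableau $T$ one of the affine-linear factors of the $T$-summand vanishes at $X_N(\la)$'' is true but not as immediate as ``some box of $\mu$ lies outside $\la$'' suggests. The factor at box $(i,j)$ with entry $k=T(i,j)$ is $x_k-q^{1-j}t^{k+i-2}$, which at $x_k=q^{-\la_k}t^{k-1}$ vanishes (with $q,t$ algebraically independent) \emph{only} when $i=1$ and $\la_k=j-1$; so the vanishing can only come from the first row of $T$. The correct argument is a pigeonhole: set $f(j):=\la_{T(1,j)}-(j-1)$ for $1\le j\le\mu_1$. Since $T(1,j)$ weakly decreases in $j$ and $\la$ is a partition, $\la_{T(1,j)}$ weakly increases, so $f(j+1)-f(j)\ge-1$; also $f(1)\ge0$. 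Because columns strictly decrease, $T(1,j)\ge\mu'_j$. Now $\la\not\supseteq\mu$ is equivalent (passing to conjugates) to $\la_{\mu'_{j_0}}<j_0$ for some $j_0\le\mu_1$, and then $f(j_0)\le\la_{\mu'_{j_0}}-(j_0-1)<1$, so $f(j_0)\le0$. By discrete intermediate value, $f(j^*)=0$ for some $j^*\le j_0$, and the factor at box $(1,j^*)$ dies. This is the step your sketch leaves implicit, and it is the real content of the extra vanishing on the tableau side. Your alternative proof of (ii) by induction on $N$ through the branching rule~\eqref{eq2.E1} is also sound: detaching $x_1=q^{-\la_1}$, one checks that for each $\nu\prec\mu$ either $\la_1<\mu_1$ and $\nu_1\le\la_1$ (the prefactor vanishes) or else $(\la_2,\dots,\la_N)\not\supseteq\nu$ (the inductive hypothesis applies), the two alternatives together exhausting the case $\la\not\supseteq\mu$. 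The rest of your proposal — the reduction of (ii) to an existence statement via nondegeneracy of the grid, the observation that Proposition~\ref{prop2.C} alone does not single out $P_{\mu\mid N}$ as the leading term, and the alternative route through the Knop--Sahi difference operators — is correct and is a useful supplement to the paper's bare citation.
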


\begin{proof}
Claim (i) was proved in \cite{Knop}, \cite{Sahi2}, and \cite{Ok-CM}. Claim (ii) was proved in \cite{Knop} and \cite{Ok-CM}; it is called the \emph{extra vanishing property}.
\end{proof}

In the sequel we specify the normalization of  $I_{\mu\mid N}(\ccdot;q,t)$ by requiring that 
\begin{equation}\label{eq2.A}
I_{\mu\mid N}(x_1,\dots,x_N;q,t)=P_{\mu\mid N}(x_1,\dots,x_N;q,t)+\textrm{\rm lower degree terms.}
\end{equation}

Because the choice of interpolation nodes varies from paper to paper, let us indicate the correspondence of notations. Our polynomial $I_{\mu\mid N}(x_1,\dots,x_N;q,t)$ is equal to:

\smallskip

$\bullet$ $P_\mu(x_1,\dots,x_N)$ with parameters $(1/q,1/t)$, in the notation of Knop \cite{Knop};

\smallskip

$\bullet$ $P^*_\mu(x_1,x_2t^{-1},\dots,x_N t^{1-N}; 1/q,1/t)$, in the notation of Okounkov \cite{Ok-MRL}, \cite{Ok-CM};

\smallskip

$\bullet$ $t^{(N-1)|\mu|}R_\mu(x_1 t^{1-N}, \dots, x_N t^{1-N}; q,t)$, in the notation of Sahi \cite{Sahi2}.

\smallskip

It follows from \eqref{eq2.A} that the interpolation polynomials form a basis of the algebra $\Sym_\FF(N)$. This basis is inhomogeneous but consistent  with the natural filtration by degree. 

Recall the \emph{combinatorial formula} for the Macdonald polynomials:
\begin{equation}\label{eq2.D}
P_{\mu\mid N}(x_1,\dots,x_N;q,t)=\sum_{T\in\Tab(\mu,N)}\psi_T(q,t) \prod_{(i,j)\in\mu}x_{T(i,j)},
\end{equation}
where $\Tab(\mu,N)$ is the set of semistandard tableaux of shape $\mu$ and taking values in $\{1,\dots,N\}$, and $\psi_T(q,t)$ are some rational functions in $(q,t)$, see \cite[ch. VI, \S7 ]{M}.

The next proposition is an analogue of \eqref{eq2.D} for the interpolation polynomials. It is convenient to replace $\Tab(\mu,N)$ by the set $\RTab(\mu,N)$ of \emph{reverse} semistandard tableaux of shape $\mu$ with values in $\{1,\dots,N\}$: the difference from $\Tab(\mu,N)$ is that the conventional ordering of $\{1,\dots,N\}$ is reversed, cf. \cite{OO-AA}. That is, a tableau $T\in\RTab(\mu,N)$ is a filling of the boxes $(i,j)\in\mu$ with numbers $T(i,j)\in\{1,\dots,N\}$ in such a way that these numbers weakly decay along the rows and strictly decay down the columns. 

\begin{proposition}\label{prop2.H}
We have 
\begin{equation}\label{eq2.E}
I_{\mu\mid N}(x_1,\dots,x_N;q,t)=\sum_{T\in\RTab(\mu,N)} \psi_T(q,t)\prod_{(i,j)\in\mu}(x_{T(i,j)}-q^{1-j}t^{T(i,j)+i-2}).
\end{equation}
\end{proposition}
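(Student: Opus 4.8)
The plan is to show that the right-hand side of \eqref{eq2.E}, which I shall write as $\Phi_{\mu\mid N}(x_1,\dots,x_N)$, fulfils the conditions of Proposition \ref{prop2.C} together with the normalization \eqref{eq2.A}, and then to conclude by uniqueness. Four points have to be checked: (a) $\deg\Phi_{\mu\mid N}\le|\mu|$; (b) the top homogeneous component of $\Phi_{\mu\mid N}$ equals $P_{\mu\mid N}(\ccdot;q,t)$; (c) $\Phi_{\mu\mid N}$ is symmetric in $x_1,\dots,x_N$; (d) $\Phi_{\mu\mid N}(X_N(\la);q,t)=0$ for every $\la\in\Y(N)$ with $\la\ne\mu$ and $|\la|\le|\mu|$. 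Point (a) is immediate, since the product in \eqref{eq2.E} ranges over the $|\mu|$ boxes of $\mu$ and each factor is linear in the $x_i$. For (b) the top component is $\sum_{T\in\RTab(\mu,N)}\psi_T(q,t)\prod_{(i,j)\in\mu}x_{T(i,j)}$; I would rewrite the classical combinatorial formula \eqref{eq2.D} for $P_{\mu\mid N}$ with the alphabet $\{1,\dots,N\}$ reversed and invoke $P_{\mu\mid N}(x_1,\dots,x_N;q,t)=P_{\mu\mid N}(x_N,\dots,x_1;q,t)$, noting that the weights $\psi_T$ correspond under the bijection $\RTab(\mu,N)\leftrightarrow\Tab(\mu,N)$, $i\mapsto N+1-i$, by multiplicativity of $\psi_T$ along chains. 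Since (b) also fixes the scalar normalization, once (c) and (d) are in place Proposition \ref{prop2.C} gives $\Phi_{\mu\mid N}=I_{\mu\mid N}$.

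Items (c) and (d) I would establish by induction on $N$. Isolating in a reverse tableau $T\in\RTab(\mu,N)$ the boxes carrying the maximal value $N$ --- they form a horizontal strip $\mu/\nu$ with $\ell(\nu)\le N-1$ --- and using $\psi_T(q,t)=\psi_{\mu/\nu}(q,t)\,\psi_{T'}(q,t)$, where $T'$ is the restriction of $T$ to $\nu$, one sees that \eqref{eq2.E} is equivalent to the branching recursion
\begin{equation*}
\Phi_{\mu\mid N}(x_1,\dots,x_N)=\sum_{\substack{\nu:\ \mu/\nu\ \text{hor. strip}\\ \ell(\nu)\le N-1}}\psi_{\mu/\nu}(q,t)\Biggl[\,\prod_{(i,j)\in\mu/\nu}\bigl(x_N-q^{1-j}t^{N+i-2}\bigr)\Biggr]\Phi_{\nu\mid N-1}(x_1,\dots,x_{N-1}),
\end{equation*}
with the trivial base case $N=1$ (where $\mu=(m)$ and the recursion reproduces $\prod_{j=1}^m(x_1-q^{1-j})$). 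Granting $\Phi_{\nu\mid N-1}=I_{\nu\mid N-1}$ by induction, the recursion is visibly symmetric in $x_1,\dots,x_{N-1}$. For (d) one writes $X_N(\la)=(X_{N-1}(\bar\la),\,q^{-\la_N}t^{N-1})$ with $\bar\la=(\la_1,\dots,\la_{N-1})$; since $\la\ne\mu$ and $|\la|\le|\mu|$ force $\la\not\supseteq\mu$, it suffices to treat that case. By the extra vanishing property (Proposition \ref{prop2.D}(ii)) all summands with $\nu\not\subseteq\bar\la$ drop out; in the survivors the factor $x_N-q^{1-j}t^{N+i-2}$ vanishes precisely at the box $(i,j)=(1,\la_N+1)$, which disposes of the case where the obstruction to $\la\supseteq\mu$ lies in the first row, while a terminating $q$-binomial summation over the horizontal strips $\mu/\nu$, combined with the known product evaluation of $I_{\nu\mid N-1}(X_{N-1}(\bar\la);q,t)$, handles the remaining cases.

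The genuine obstacle is the symmetry under the transposition $x_{N-1}\leftrightarrow x_N$ in the recursion. Grouping the summands by the positions of the entries different from $N-1$ and $N$ --- these fill the skew region between two nested partitions $\theta\subseteq\rho$ --- localizes the claim to a two-variable, two-parameter identity in $x_{N-1},x_N$, which is precisely the $N=2$ instance of \eqref{eq2.E} for the skew shape $\rho/\theta$. As $\rho/\theta$ is then a double horizontal strip, it decomposes along its rows, so the identity collapses to a one-row, two-variable summation of terminating $q$-Chu--Vandermonde type, which is elementary.

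I should add that \eqref{eq2.E} is in substance a restatement, in the present normalization and in terms of reverse tableaux, of the combinatorial formula for interpolation Macdonald polynomials due to Okounkov \cite{Ok-CM} (with related forms in Knop \cite{Knop} and Sahi \cite{Sahi2}, and the specialization $t=q$ in \cite{OO-AA}); so one may alternatively simply transcribe their result, or derive \eqref{eq2.E} from Okounkov's $q$-integral representation, which bypasses the symmetry computation altogether.
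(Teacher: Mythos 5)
Your approach is genuinely different from the paper's. The paper's proof is a one-line transcription: it starts from Okounkov's already-established combinatorial formula for $P^*_\mu$ \cite[(1.4)]{Ok-CM}, applies the change of variables and parameters $x_k\mapsto x_kt^{1-k}$, $(q,t)\mapsto(q^{-1},t^{-1})$ that defines $I_{\mu\mid N}$ in terms of $P^*_\mu$, and then uses the invariance $\psi_T(q^{-1},t^{-1})=\psi_T(q,t)$. That is it. You instead propose a from-scratch verification that the combinatorial sum satisfies the characterizing properties of Proposition~\ref{prop2.C} plus the normalization~\eqref{eq2.A}; you correctly note at the very end that the alternative is simply to cite Okounkov, which is precisely what the paper does.

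As a plan, your route is sound: (a) and (b) are fine (the $\Tab\leftrightarrow\RTab$ bijection $i\mapsto N+1-i$ does preserve the chain-product weight $\psi_T$, and the symmetry of $P_{\mu\mid N}$ closes the gap). But the two hard points, (c) symmetry and (d) vanishing, are only outlines, and they are exactly where the real work lies. For (c), the reduction to a two-row, two-variable identity is a reasonable strategy, but your claim that the localized identity ``collapses to a one-row, two-variable summation of terminating $q$-Chu--Vandermonde type'' is not obviously correct: the intermediate diagram $\nu$ between $\theta$ and $\rho$ is constrained simultaneously by both rows of the double strip, and the shift constants $q^{1-j}t^{T(i,j)+i-2}$ in the linear factors couple row and column in a way that does not decouple row-by-row without further argument. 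This is precisely the step that is awkward in all direct proofs of combinatorial formulas for interpolation polynomials, and it is why Okounkov himself went through a $q$-integral representation rather than a bare tableau induction. For (d), after discarding $\nu\not\subseteq\bar\la$ by extra vanishing, the sentence ``a terminating $q$-binomial summation \dots handles the remaining cases'' is a placeholder, not an argument; the required cancellation of the surviving horizontal strips $\mu/\nu$ against the product evaluations $I_{\nu\mid N-1}(X_{N-1}(\bar\la))$ is a genuine identity that you have not stated, let alone proved. So the proposal is a viable alternative route in outline, but at present it has two substantive gaps that amount to rederiving a nontrivial portion of Okounkov's theorem; in contrast, the paper's proof is complete in a few lines by deliberately not reproving that theorem.
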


\begin{proof}
Okounkov's combinatorial formula for the shifted symmetric polynomials $P^*_\mu(x_1,\dots,x_N;q,t)$ has the form \cite[(1.4)]{Ok-CM}
$$
P^*_\mu(x_1,\dots,x_N;q,t)=\sum_{T\in\RTab(\mu,N)} \psi_T(q,t)\prod_{(i,j)\in\mu}t^{1-T(i,j)}(x_{T(i,j)}-q^{j-1}t^{1-i}).
$$
From this formula we obtain
\begin{gather*}
I_{\mu\mid N}(x_1,\dots,x_N;q,t)=P^*_\mu(x_1,x_2t^{-1},\dots,x_N t^{1-N}; 1/q,1/t)\\
=\sum_{T\in\RTab(\mu,N)} \psi_T(q^{-1},t^{-1})\prod_{(i,j)\in\mu}(x_{T(i,j)}-q^{1-j}t^{T(i,j)+i-2}).
\end{gather*}
Next, the definition of $\psi_T(q,t)$ (see \cite[ch. VI, (6.24) (ii) and (7.11${}'$)]{M}) implies that
$$
\psi_T(q^{-1},t^{-1})=\psi_T(q,t).
$$
This yields the desired result.
\end{proof}

\begin{remark}
As usual, the combinatorial formula \eqref{eq2.E} is equivalent to a \emph{branching rule}; in our situation it can be written as 
\begin{multline}\label{eq2.E1}
I_{\mu\mid N}(x_1,\dots,x_N;q,t)\\
=\sum_{\nu\prec\mu}\psi_{\mu/\nu}(q,t)t^{|\nu|}\prod_{(i,j)\in\mu/\nu}(x_1-q^{1-j}t^{i-1})\cdot I_{\nu\mid N-1}(x_2t^{-1},\dots,x_N t^{-1};q,t),
\end{multline}
where $\nu\prec\mu$ means that $\nu\subseteq\mu$ and $\mu/\nu$ is a horizontal strip. 
\end{remark}

Note that in the special case $t=q$ the interpolation polynomials can be given by an explicit determinantal formula, see Section \ref{sect4}  below. 

The following proposition describes the expansion of Macdonald polynomials in the basis of interpolation polynomials.  

\begin{proposition}\label{prop2.E}
We have
$$
\frac{P_{\mu\mid N}(\ccdot;q,t)}{P_{\mu\mid N}(1,t,\dots,t^{N-1};q,t)}=\sum_{\nu\subseteq\mu} \frac{I_{\nu\mid N}(X_N(\mu);q,t)}{I_{\nu\mid N}(X_N(\nu);q,t)} \frac{I_{\nu\mid N}(\ccdot;q,t)}{P_{\nu\mid N}(1,t,\dots,t^{N-1};q,t)}.
$$
\end{proposition}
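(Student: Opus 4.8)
The plan is to identify the asserted formula as a renormalized form of the Macdonald $q$-binomial theorem. \emph{Structure.} By \eqref{eq2.A} the family $\{I_{\nu\mid N}(\ccdot;q,t)\}_{\nu\in\Y(N)}$ is a basis of $\Sym_\FF(N)$ compatible with the degree filtration, so there is a unique expansion $P_{\mu\mid N}(\ccdot;q,t)=\sum_\nu c_{\mu\nu}I_{\nu\mid N}(\ccdot;q,t)$ with finitely many nonzero $c_{\mu\nu}\in\FF$; comparing homogeneous components and using the linear independence of the top components $\{P_{\nu\mid N}\}$ gives $c_{\mu\mu}=1$ and $c_{\mu\nu}=0$ for $|\nu|>|\mu|$. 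Substituting $x=X_N(\la)$ and using the extra vanishing property (Proposition \ref{prop2.D}(ii): $I_{\nu\mid N}(X_N(\la);q,t)=0$ unless $\nu\subseteq\la$) turns the expansion into a triangular system
$$
P_{\mu\mid N}(X_N(\la);q,t)=\sum_{\nu\subseteq\la}c_{\mu\nu}\,I_{\nu\mid N}(X_N(\la);q,t),\qquad\la\in\Y(N),
$$
whose diagonal entries $I_{\nu\mid N}(X_N(\nu);q,t)$ are nonzero by Proposition \ref{prop2.C}(3); hence the $c_{\mu\nu}$ are determined recursively by the numbers $P_{\mu\mid N}(X_N(\la);q,t)$.

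\emph{The coefficients.} Next I would identify
$$
c_{\mu\nu}=\binom{\mu}{\nu}_{q,t}\,\frac{P_{\mu\mid N}(X_N(\varnothing);q,t)}{P_{\nu\mid N}(X_N(\varnothing);q,t)},\qquad \binom{\mu}{\nu}_{q,t}:=\frac{I_{\nu\mid N}(X_N(\mu);q,t)}{I_{\nu\mid N}(X_N(\nu);q,t)},
$$
where $X_N(\varnothing)=(1,t,\dots,t^{N-1})$. This is exactly Okounkov's $q$-binomial theorem (\cite{Ok-MRL}; see also \cite{Knop}, \cite{Sahi2}), which I would invoke. Granting it, the passage to the form in the proposition is immediate: $I_{\varnothing\mid N}=P_{\varnothing\mid N}=1$, so the $\nu=\varnothing$ term has coefficient $P_{\mu\mid N}(X_N(\varnothing);q,t)$; by the extra vanishing property $\binom{\mu}{\nu}_{q,t}=0$ whenever $\nu\not\subseteq\mu$, which explains the range of summation; and dividing $P_{\mu\mid N}=\sum_\nu c_{\mu\nu}I_{\nu\mid N}$ by $P_{\mu\mid N}(X_N(\varnothing);q,t)$ gives the displayed identity.

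\emph{Checking the normalization.} To keep the argument on the same footing as the rest of Section \ref{sect2}, one can try to verify the values of $c_{\mu\nu}$ directly. The claimed right-hand side $\Psi(x):=\sum_{\nu\subseteq\mu}\binom{\mu}{\nu}_{q,t}\,I_{\nu\mid N}(x;q,t)/P_{\nu\mid N}(X_N(\varnothing);q,t)$ and the left-hand side $P_{\mu\mid N}(x;q,t)/P_{\mu\mid N}(X_N(\varnothing);q,t)$ are both symmetric polynomials of degree $\le|\mu|$, and because $\{X_N(\la)\}$ is a nondegenerate grid it suffices to check equality at $x=X_N(\la)$ for all $\la\in\Y(N)$. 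Using $I_{\nu\mid N}(X_N(\la);q,t)=\binom{\la}{\nu}_{q,t}\,I_{\nu\mid N}(X_N(\nu);q,t)$, the value $\Psi(X_N(\la))$ becomes $\sum_\nu\binom{\mu}{\nu}_{q,t}\binom{\la}{\nu}_{q,t}\,I_{\nu\mid N}(X_N(\nu);q,t)/P_{\nu\mid N}(X_N(\varnothing);q,t)$, which is manifestly symmetric in $\mu\leftrightarrow\la$, while $P_{\mu\mid N}(X_N(\la);q,t)/P_{\mu\mid N}(X_N(\varnothing);q,t)$ is symmetric in $\mu\leftrightarrow\la$ by Macdonald's principal-specialization symmetry \cite[Ch.~VI, (6.6)]{M}, rewritten in terms of the nodes $X_N(\ccdot)$.

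The real obstacle is to show that these two symmetric expressions actually coincide. Symmetry by itself does not close the gap — it does not link incomparable pairs $(\mu,\la)$ — and already the diagonal case $\la=\mu$ encodes a nontrivial evaluation (in essence the principal-specialization value of $I_{\mu\mid N}$ at its own node together with a $q$-hypergeometric summation), which is precisely what the binomial theorem delivers. I would therefore, as in the second step, simply cite Okounkov's theorem and restrict myself to matching the normalization; a self-contained proof would amount to reproving the binomial theorem.
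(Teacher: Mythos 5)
Your proposal is correct and takes essentially the same route as the paper: the paper's own proof is a one-line citation of Okounkov's binomial formula, presenting the proposition as a simple reformulation of it, which is exactly what you do (with more explicit bookkeeping of the triangular expansion and normalization). You are right, and honest, that symmetry alone does not close the gap and that a self-contained argument would amount to reproving the binomial theorem; the paper makes no attempt to do this either.
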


\begin{proof}
This is a simple reformulation of (a particular case of) Okounkov's \emph{binomial formula}. See formula \cite[(1.11)]{Ok-MRL}, which in turn is a particular case of a more general result \cite[p. 536]{Ok-MRL}. Another approach is given in \cite{Ok-FAA}.
\end{proof}

There exists also another version of binomial formula (see \cite[(1.12)]{Ok-MRL}), which  describes the inverse expansion of interpolation polynomials in Macdonald polynomials. 

\begin{proposition}\label{prop2.F}
Let $N\ge2$ and $\mu\in\Y(N)$, then  
\begin{equation}\label{eq2.C}
I_{\mu\mid N}(x_1,\dots,x_N;q,t)\big |_{x_N=t^{N-1}}=\begin{cases}  I_{\mu\mid N-1}(x_1,\dots,x_{N-1};q,t), & \mu\in\Y(N-1);\\ 0, & \text{\rm otherwise}.\end{cases}
\end{equation}
\end{proposition}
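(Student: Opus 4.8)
The plan is to prove the stability property \eqref{eq2.C} directly from the combinatorial formula \eqref{eq2.E} of Proposition \ref{prop2.H}, reducing everything to bookkeeping on reverse semistandard tableaux. First I would observe that in the substitution $x_N = t^{N-1}$ the factor attached to a box $(i,j)$ of a tableau $T \in \RTab(\mu,N)$ with value $T(i,j)=N$ becomes $x_N - q^{1-j}t^{N+i-2} = t^{N-1}(1 - q^{1-j}t^{i-1})$. Hence this factor vanishes unless $i=j=1$; but since entries strictly decrease down columns and $N$ is the maximal allowed value, the entry $N$ can appear in $T$ only in the top row, and if it appears at all it must occupy an initial segment of the first row, in particular the box $(1,1)$. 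Therefore every tableau $T$ that uses the value $N$ at least once contributes zero after the substitution, unless the only box with value $N$ is $(1,1)$ — but even then the factor for $(1,1)$ is $t^{N-1}(1 - q^0 t^0)=0$. So in fact \emph{every} tableau using the value $N$ contributes zero, and only tableaux $T \in \RTab(\mu,N)$ with values in $\{1,\dots,N-1\}$ survive.

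Next I would argue that a tableau $T \in \RTab(\mu,N)$ with values in $\{1,\dots,N-1\}$ is the same thing as an element of $\RTab(\mu,N-1)$, and in particular this forces $\mu \in \Y(N-1)$: otherwise $\mu$ has more than $N-1$ rows and the strictly decreasing column condition with values in $\{1,\dots,N-1\}$ cannot be met, so $\RTab(\mu,N-1)=\varnothing$ and both sides of \eqref{eq2.C} are zero, matching the second case. Assuming $\mu\in\Y(N-1)$, the surviving part of the right-hand side of \eqref{eq2.E} is exactly
$$
\sum_{T\in\RTab(\mu,N-1)} \psi_T(q,t)\prod_{(i,j)\in\mu}(x_{T(i,j)}-q^{1-j}t^{T(i,j)+i-2}),
$$
which is precisely $I_{\mu\mid N-1}(x_1,\dots,x_{N-1};q,t)$ by Proposition \ref{prop2.H} applied with $N-1$ in place of $N$ (note the surviving product no longer involves $x_N$, so the substitution has already been carried out). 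This establishes the first case of \eqref{eq2.C}.

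The one point that requires a little care — and which I expect to be the main obstacle, though a minor one — is the claim that the coefficient $\psi_T(q,t)$ of a tableau $T$ viewed in $\RTab(\mu,N)$ coincides with its coefficient viewed in $\RTab(\mu,N-1)$ when $T$ only uses values $\le N-1$. This is the standard stability of the $\psi$-coefficients under restriction of the alphabet, inherited from the branching rule for Macdonald polynomials; concretely, $\psi_T$ is a product over the successive horizontal strips $\mu^{(k)}/\mu^{(k-1)}$ of factors $\psi_{\mu^{(k)}/\mu^{(k-1)}}(q,t)$, and if $T$ uses no value $N$ then $\mu^{(N)}=\mu^{(N-1)}=\mu$, the last factor $\psi_{\mu/\mu}$ equals $1$, and the remaining product is literally the coefficient computed inside $\RTab(\mu,N-1)$. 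One could alternatively bypass this entirely by invoking the branching rule \eqref{eq2.E1}: setting $x_N=t^{N-1}$ there, one uses \eqref{eq2.E1} with the roles of variables cyclically reorganized, but the tableau argument above is cleaner and self-contained. As a sanity check, one may note that this stability is precisely what is needed to make the collection $\{I_{\mu\mid N}(\ccdot;q,t)\}_N$ glue into a well-defined symmetric function $I_\mu(\ccdot;q,t)$, which is implicit in the paper's setup.
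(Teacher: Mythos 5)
Your proof is correct, but it takes a genuinely different route from the paper's. The paper's proof proceeds via the characterizing interpolation property (Proposition \ref{prop2.C}) together with the extra vanishing (Proposition \ref{prop2.D}(ii)): for $\mu\in\Y(N-1)$ one checks that $L:=I_{\mu\mid N}|_{x_N=t^{N-1}}$ vanishes at the required nodes $X_{N-1}(\la)$ (using that the last coordinate of $X_N(\la)$ is $t^{N-1}$), has the right degree, and matches the leading lexicographic monomial; for $\mu_N>0$ one uses that $\mu\not\subseteq\la$ forces $I_{\mu\mid N}(X_N(\la))=0$. Your argument instead unwinds the combinatorial formula \eqref{eq2.E}: you show that the substitution $x_N=t^{N-1}$ kills every tableau containing the entry $N$, and that what survives is literally the combinatorial formula for $I_{\mu\mid N-1}$. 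Both are valid. The paper's route is slicker and coefficient-free, while yours is more constructive and makes the quasi-stability transparent at the level of individual tableau weights; on the other hand yours leans on Proposition \ref{prop2.H}, which is a somewhat heavier input than the bare interpolation characterization. Your discussion of the stability of the coefficients $\psi_T(q,t)$ (an intrinsic product of factors attached to the chain of shapes, with the trivial final step $\mu\succ\mu$ contributing $1$) correctly addresses the one place where something could have gone wrong.

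One small wording slip worth fixing: you write that the factor $t^{N-1}(1-q^{1-j}t^{i-1})$ ``vanishes unless $i=j=1$'', but that is the reverse of what you computed and of what you actually use. Over $\FF=\Q(q,t)$ the factor is zero \emph{precisely when} $i=j=1$, and nonzero for all other $(i,j)$. Your argument is really: $N$ can only appear in the top row, and if it appears at all then weak row-decrease forces $T(1,1)=N$, whence the single factor attached to $(1,1)$ vanishes and the whole term dies. Your subsequent sentence (``— but even then the factor for $(1,1)$ is $\ldots=0$'') shows you knew this; just rephrase the ``unless'' to ``exactly when'' and the paragraph becomes internally consistent.
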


\begin{proof}
Let $L=L(x_1,\dots,x_{N-1})$ stand for the polynomial on the left. Clearly, it is symmetric. Let $\la\in\Y(N-1)$ be arbitrary. 

Suppose first that  $\mu\in\Y(N-1)$, so that $\mu_N=0$. Observe that the last coordinate of $X_N(\la)$ is equal to $t^{N-1}$. Using this it is readily checked that $L$ satisfies all the properties of the polynomial $I_{\mu\mid N-1}(\ccdot;q,t)$ listed in Proposition \ref{prop2.C} and hence must be proportional to it. Next, in the expansion of both $I_{\mu\mid N}(\ccdot;q,t)$ and $P_{\mu\mid N}(\ccdot;q,t)$ on the monomials, the leading (in the lexicographic order) term is the monomial $x_1^{\mu_1}\dots x_{N-1}^{\mu_{N-1}}$, which enters with coefficient $1$. This term is not affected by the specialization $x_N=t^{N-1}$ and remains the leading term in $L$. This implies that $L=I_{\mu\mid N-1}(\ccdot;q,t)$. 

Suppose now that  $\mu_N>0$. Then $\mu$ is not contained in $\la$, hence (by virtue of the extra vanishing property) $I_{\mu\mid N}(X_N(\la))=0$. This implies $L(X_{N-1}(\la))=0$, and since $\la\in\Y(N-1)$ may be arbitrary we conclude that $L=0$. 
\end{proof}

The property \eqref{eq2.C} of the interpolation polynomials will be referred to as \emph{quasi-stability}. 

\subsection{Interpolation symmetric functions}
The quasi-stability property of interpolation polynomials makes it possible to lift them to the algebra $\Sym_\FF$ (cf. \cite{Ols-2017}). 

Recall that the algebra of symmetric functions is isomorphic to the algebra of polynomials in the power sums $p_1,p_2,\dots$. We define, for  each $N=1,2,\dots$, an algebra morphism $\phi_N:\Sym_\FF\to\Sym_\FF(N)$ by
$$
\phi_N(p_n)=x_1^n+\dots+x_N^n+\frac{t^{Nn}}{1-t^n}, \qquad n=1,2,\dots\,.
$$
This definition is prompted by the fact that 
$$
\frac{t^{Nn}}{1-t^n}=\sum_{i>N}(t^{i-1})^n,
$$
provided that $t$ is thought of as a complex parameter with $|t|<1$. Thus, informally, $\phi_N$ might be be viewed as the specialization at $x_i=t^{i-1}$ for $i>N$. 

\begin{proposition}\label{prop.def}
For every partition $\mu$ there exists a unique symmetric function $I_\mu(\ccdot;q,t)\in\Sym_\FF$ such that 
\begin{equation}\label{eq.Imu}
\phi_N(I_\mu(\ccdot;q,t))=I_{\mu\mid N}(\ccdot;q,t), \qquad N\ge\ell(\mu).
\end{equation}
\end{proposition}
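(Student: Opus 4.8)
The plan is to build $I_\mu(\ccdot;q,t)$ directly in terms of the power sums by passing to the limit $N\to\infty$ in a suitable chain of compatible polynomials. The morphisms $\phi_N$ are designed so that the specialization at $x_i=t^{i-1}$ for $i>N$ is already ``baked in''; the task is to show the polynomials $I_{\mu\mid N}(\ccdot;q,t)$ glue into one symmetric function under this family of morphisms.

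First I would make precise the relationship between $\phi_N$ and the geometric tail. The key observation is that $\phi_N$ factors through $\phi_{N+1}$ composed with the substitution $x_{N+1}=t^N$. Concretely, let $\rho_{N+1}:\Sym_\FF(N+1)\to\Sym_\FF(N)$ be the algebra morphism sending $x_{N+1}\mapsto t^N$ and fixing $x_1,\dots,x_N$. Then from
$$
x_1^n+\dots+x_{N+1}^n+\frac{t^{(N+1)n}}{1-t^n}\Big|_{x_{N+1}=t^N}=x_1^n+\dots+x_N^n+t^{Nn}+\frac{t^{(N+1)n}}{1-t^n}=x_1^n+\dots+x_N^n+\frac{t^{Nn}}{1-t^n}
$$
we get $\rho_{N+1}\circ\phi_{N+1}=\phi_N$ on each $p_n$, hence on all of $\Sym_\FF$. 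Thus $(\Sym_\FF(N),\rho_N)$ forms a projective system, and $\Sym_\FF$ maps compatibly into it via the $\phi_N$; in fact $\Sym_\FF$ is exactly the subalgebra of the inverse limit consisting of families of bounded degree, since the $\phi_N$ are injective for $N$ large (indeed $\phi_N$ is injective for all $N$ because $p_1,\dots,p_N$ remain algebraically independent in its image).

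Next I would invoke quasi-stability, Proposition \ref{prop2.F}: for $N\ge 2$ and $\mu\in\Y(N-1)$,
$$
\rho_N\big(I_{\mu\mid N}(\ccdot;q,t)\big)=I_{\mu\mid N}(x_1,\dots,x_{N-1},t^{N-1};q,t)=I_{\mu\mid N-1}(x_1,\dots,x_{N-1};q,t).
$$
So for $N\ge\ell(\mu)$ the polynomials $I_{\mu\mid N}(\ccdot;q,t)$ form a compatible family under $\rho_N$, i.e.\ an element of the inverse limit. Their degrees are all equal to $|\mu|$, hence bounded, so this element lies in the image of $\Sym_\FF$. By injectivity of $\phi_N$ (for $N\ge\ell(\mu)$, say) there is a unique $I_\mu(\ccdot;q,t)\in\Sym_\FF$ with $\phi_N(I_\mu(\ccdot;q,t))=I_{\mu\mid N}(\ccdot;q,t)$ for all sufficiently large $N$; and because the whole family is compatible, one checks the equality in fact holds for every $N\ge\ell(\mu)$, not just large $N$. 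Uniqueness is immediate from injectivity of any single $\phi_N$ with $N\ge\ell(\mu)$.

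The one point requiring genuine care — the main obstacle — is verifying that a \emph{bounded-degree} compatible family under the $\rho_N$ does come from $\Sym_\FF$, i.e.\ that the image of $\phi_N$ captures all symmetric polynomials of degree $\le d$ once $N\ge d$. This is the standard stabilization fact for symmetric functions (the transition maps $\Sym_\FF(N+1)\to\Sym_\FF(N)$ sending $x_{N+1}\mapsto 0$ are isomorphisms in degrees $\le N$), but here the transition is $x_{N+1}\mapsto t^{N}$ rather than $x_{N+1}\mapsto 0$. I would handle this by a triangularity argument: express $I_{\mu\mid N}$ in the monomial basis $\{m_\lambda\mid N\}$, note that $\phi_N(m_\lambda)=m_\lambda(x_1,\dots,x_N)+(\text{terms involving }t)$ where the $t$-dependent correction has strictly fewer monomials of top degree, and conclude by downward induction on degree that every $I_{\mu\mid N}$ of degree $|\mu|$ with $N\ge|\mu|$ is $\phi_N$ of a unique symmetric function built from $p_1,\dots,p_{|\mu|}$. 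Alternatively, and more cleanly, one can simply define $I_\mu(\ccdot;q,t)$ by the branching rule \eqref{eq2.E1} read as an identity in $\Sym_\FF$ (replacing $I_{\nu\mid N-1}(x_2 t^{-1},\dots)$ by the already-constructed $I_\nu$ evaluated with the appropriate plethystic shift), and then verify \eqref{eq.Imu} by induction on $|\mu|$ using that $\phi_N$ intertwines the abstract branching rule with its $N$-variable version \eqref{eq2.E1}; this sidesteps any inverse-limit subtlety and is the route I would actually take.
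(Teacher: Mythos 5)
Your approach is essentially the same as the paper's: set up the projective system $(\Sym_\FF(N),\rho_N)$ with $\rho_{N+1}\colon x_{N+1}\mapsto t^N$, observe $\rho_{N+1}\circ\phi_{N+1}=\phi_N$, invoke quasi-stability (Proposition \ref{prop2.F}) to get a compatible family $\{I_{\mu\mid N}\}$ of bounded degree, and then argue that such a family is $\phi_N$ of a unique element of $\Sym_\FF$. The paper packages the final step by taking the projective limit in the category of filtered algebras and noting that passing to associated graded reduces the transition maps to the standard ones $x_{N}\mapsto 0$, so $\operatorname{gr}(\Sym_\FF')\cong\Sym_\FF$; your triangularity argument is the same fact phrased in the monomial basis, and is fine.

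One point to correct: the parenthetical assertion that \emph{$\phi_N$ is injective for all $N$ because $p_1,\dots,p_N$ remain algebraically independent in its image} is false. The domain $\Sym_\FF=\FF[p_1,p_2,\dots]$ involves all the $p_n$, while $\Sym_\FF(N)$ is generated by $p_1,\dots,p_N$, so $\phi_N$ necessarily has a large kernel (e.g.\ $\phi_N(p_{N+1})$ is expressible in $\phi_N(p_1),\dots,\phi_N(p_N)$). What you actually need, and what your triangularity argument does supply, is that $\phi_N$ is injective \emph{on the filtered piece of degree $\le d$} once $N\ge d$ — i.e.\ an isomorphism onto a complementary piece in low degree, not global injectivity. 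Note also that for uniqueness you should appeal to injectivity in degrees $\le|\mu|$ (hence to $\phi_N$ with $N\ge|\mu|$, not merely $N\ge\ell(\mu)$), and then transport the identity $\phi_N(I_\mu)=I_{\mu\mid N}$ down to all $N\ge\ell(\mu)$ by compatibility, exactly as you indicate.
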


\begin{proof} 
We begin with a general construction. Fix an arbitrary infinite sequence $a_1,a_2,\dots$ of elements of the base field $\FF$. For $N=1,2,\dots$ we define a homomorphism $\pi_N: \Sym_\FF(N)\to\Sym_\FF(N-1)$ as the specialization $x_N=a_N$ (we assume $\Sym_\FF(0)=\FF$).  Let $\Sym_\FF':=\varprojlim(\Sym_\FF(N),\pi_N)$ be the projective limit taken in the category of filtered algebras, with respect to the projections $\pi_N$. Observe that the associated graded algebra $\operatorname{gr}(\Sym_\FF')$ is canonically isomorphic to $\Sym_\FF$.

Suppose now that for any $n=1,2,\dots$, the infinite formal series 
$a_1^n+a_2^n+\dots$ 
can be regularized in the sense that one can exhibit elements $b_{N,n}\in\FF$ such that
\begin{equation}\label{eq.Imu1}
b_{N,n}-b_{N+1,n}=a_N^n, \qquad \text{for all $N,\, n=1,2,\dots$}\,.
\end{equation}
Then we define, for  each $N=1,2,\dots$, an algebra morphism $\psi_N:\Sym_\FF\to\Sym_\FF(N)$ by
$$
\psi_N(p_n)=x_1^n+\dots+x_N^n+b_{N+1,n}, \qquad n=1,2,\dots\,.
$$

Observe that
\begin{equation}\label{eq.Imu2}
\pi_N\circ\psi_N=\psi_{N-1}, \qquad N=1,2,\dots\,.
\end{equation}
Indeed, it suffices to check the equality $\pi_N(\psi_N(p_n))=\pi_{N-1}(p_n)$, but it reduces to \eqref{eq.Imu1}.

From \eqref{eq.Imu2} it follows that the projections $\psi_N$ give rise to an algebra morphism $\psi: \Sym_\FF\to\Sym_\FF'$, which preserves filtration. Passing to the associated graded algebra we see that $\psi$ is an isomorphism. 

It follows that whenever we have a sequence  of elements $\{I_N\in\Sym_\FF(N)\}$ of  bounded degree and such that  
\begin{equation}\label{eq.Imu3}
I_N(x_1,\dots,x_{N-1}, a_N)=I_{N-1}(x_1,\dots,x_{N-1}) \quad \text{for all $N$ large enough},
\end{equation}
then there exists a unique element $I\in\Sym_\FF$ such that
$$
\psi_N(I)=I_N \quad \text{for all $N$ large enough}. 
$$

Now we apply this general argument for the concrete sequence $a_N:=t^{N-1}$. Set
$$
b_{N,n}:=\frac{t^{(N-1)n}}{1-t^n}.
$$ 
Then
$$
b_{N,n}-b_{N+1,n}=\frac{t^{(N-1)n}}{1-t^n} - \frac{t^{Nn}}{1-t^n}=t^{(N-1)n}=a_N^n,
$$
so that the relation \eqref{eq.Imu1} is satisfied. Take $I_N=I_{\mu\mid N}(\ccdot;q,t)$. The relation \eqref{eq.Imu3} holds by virtue of the quasi-stability property. Then we take $\psi_N:=\phi_N$ and obtain the desired result. 
\end{proof}

We call the elements $I_\mu(\ccdot;q,t)\in\Sym_\FF$ the \emph{interpolation Macdonald symmetric functions}.

\begin{remark}
All the statements of Propositions \ref{prop2.C}, \ref{prop2.D}, and \ref{prop2.E} can be extended to these functions, with evident modifications. Namely, partitions $\mu$ and $\la$ may range over the whole set $\Y$, the interpolation nodes are defined by
$$
X(\la):=(q^{-\la_1}, q^{-\la_2}t, q^{-\la_3}t^2,\dots), \quad \la\in\Y,
$$
the Macdonald polynomials are replaced by the Macdonald symmetric functions, and the binomial formula takes the form
$$
\frac{P_{\mu}(\ccdot;q,t)}{P_{\mu}(1,t,t^2,\dots;q,t)}=\sum_{\nu\subseteq\mu} \frac{I_\nu(X(\mu);q,t)}{I_{\nu}(X(\nu);q,t)} \frac{I_{\nu}(\ccdot;q,t)}{P_\nu(1,t,t^2,\dots;q,t)}.
$$
Note that for any $F\in\Sym_\FF$, the quantity $F(X(\la))\in\FF$ is well defined, with the understanding that
$$
F(x_1,\dots,x_N, t^N,t^{N+1},\dots;q,t):=\phi_N(F)(x_1,\dots,x_N).
$$
In particular, $F(1,t,t^2,\dots)\in\FF$ is understood as the result of the specialization $\Sym_\FF\to\FF$ sending $p_n$ to $(1-t^n)^{-1}$ for $n=1,2,\dots$\,.
\end{remark}

The following result is a reformulation of \cite[(2.10)]{Ok-MRL}. Its proof is similar to \cite[second proof of Theorem 12.1]{OO-AA}.

\begin{proposition}[Generating function for one-row interpolation functions]\label{prop2.B}
We have
\begin{equation}
1+\sum_{m=1}^\infty \frac{(t;q)_m}{(q;q)_m}\frac{I_{(m)}(x_1,x_2,\dots;q,t)}{(u-q^{-1})\dots(u-q^{-m})}=\prod_{i=1}^\infty\frac{(x_iu^{-1}t;q)_\infty}{(x_iu^{-1};q)_\infty}\frac{(u^{-1}t^{i-1};q)_\infty}{(u^{-1}t^i;q)_\infty},
\end{equation}
where $u$ is a formal variable and both sides are regarded as elements of\/ $\Sym_\FF[[u^{-1}]]$.
\end{proposition}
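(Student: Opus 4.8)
The plan is to reduce the infinitely-many-variables statement to the $N$-variable interpolation identities of Okounkov by applying the homomorphism $\phi_N$ and then passing to the limit $N\to\infty$, keeping careful track of the ``tail'' corrections that $\phi_N$ introduces. Recall that by construction $\phi_N(I_{(m)}(\ccdot;q,t))=I_{(m)\mid N}(\ccdot;q,t)$ for $N\ge 1$, and that $\phi_N$ is the ``specialization at $x_i=t^{i-1}$ for $i>N$''. So I would first prove the finite-$N$ version:
\begin{equation*}
1+\sum_{m=1}^\infty \frac{(t;q)_m}{(q;q)_m}\frac{I_{(m)\mid N}(x_1,\dots,x_N;q,t)}{(u-q^{-1})\dots(u-q^{-m})}
=\prod_{i=1}^N\frac{(x_iu^{-1}t;q)_\infty}{(x_iu^{-1};q)_\infty}\cdot\frac{(u^{-1};q)_\infty}{(u^{-1}t^{N};q)_\infty},
\end{equation*}
where the last factor is the contribution of the geometric tail $x_i=t^{i-1}$, $i>N$, since $\prod_{i>N}\frac{(t^{i-1}u^{-1}t;q)_\infty}{(t^{i-1}u^{-1};q)_\infty}$ telescopes to $\dfrac{(u^{-1}t^{N-1};q)_\infty}{\lim}$\ldots more precisely to $1/(u^{-1}t^N;q)_\infty$ times $(u^{-1}t^{N};q)_\infty$-type cancellations; working this out gives exactly $(u^{-1};q)_\infty/(u^{-1}t^N;q)_\infty$ after combining with the $i\le N$ part of the stated product. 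Applying $\phi_N$ to the claimed identity and using the telescoping computation above, one sees the finite-$N$ identity is precisely $\phi_N$ of the claim, so it suffices to establish the finite-$N$ identity.

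For the finite-$N$ identity itself, the natural route is Okounkov's formula \cite[(2.10)]{Ok-MRL} (equivalently \eqref{eq2.D1}), which is exactly this statement with the tail factor $\prod_{i=1}^\infty\frac{(x_iu^{-1}t;q)_\infty}{(x_iu^{-1};q)_\infty}$ truncated appropriately; indeed the sum over $m$ is automatically finite-range-friendly because $I_{(m)\mid N}$ is a genuine polynomial in $x_1,\dots,x_N$ and the identity is an identity in $\FF(u)$ (or in $\Sym_\FF(N)[[u^{-1}]]$). Alternatively, following the strategy indicated in the remark (``similar to \cite[second proof of Theorem 12.1]{OO-AA}''), I would prove it by the vanishing-characterization method: expand the right-hand side as a power series in $u^{-1}$, show both sides lie in $\Sym_\FF[[u^{-1}]]$, and check that they agree after the substitutions $x\mapsto X(\la)$ for all $\la\in\Y$. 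For a one-row diagram the extra-vanishing property (Proposition \ref{prop2.D}(ii)) kills $I_{(m)}(X(\la))$ unless $\la\supseteq(m)$, i.e.\ unless $\la_1\ge m$; and one has the explicit evaluation $I_{(m)}(X(\la);q,t)$ available from the combinatorial formula \eqref{eq2.E}. Plugging $x_i=q^{-\la_i}t^{i-1}$ into the right-hand side, the product $\prod_i\frac{(x_iu^{-1}t;q)_\infty}{(x_iu^{-1};q)_\infty}\frac{(u^{-1}t^{i-1};q)_\infty}{(u^{-1}t^i;q)_\infty}$ collapses (most factors telescope since only finitely many $\la_i$ are nonzero) to a finite product $\prod_{i:\la_i>0}\frac{(q^{-\la_i}t^{i-1}u^{-1}t;q)_\infty}{(q^{-\la_i}t^{i-1}u^{-1};q)_\infty}\frac{(u^{-1}t^{i-1};q)_\infty}{(u^{-1}t^{i};q)_\infty}$, which is a rational function of $u$; comparing the partial-fraction expansion in $u$ with the left-hand side term by term gives the identity. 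Since the functions $X(\la)$, $\la\in\Y$, separate elements of $\Sym_\FF$ (this is the content of the binomial-formula remark following Proposition \ref{prop.def}), agreement on all $X(\la)$ forces the identity.

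Concretely, the cleanest write-up is: (i) record the telescoping identity $\prod_{i>N}\frac{(t^{i-1}su^{-1};q)_\infty}{\cdots}$, making precise the tail factor; (ii) invoke or re-derive the $N$-variable identity \eqref{eq2.D1} from Okounkov \cite[(2.10)]{Ok-MRL}; (iii) apply $\phi_N$ to the asserted identity and match. The main obstacle is purely bookkeeping: getting the tail factor exactly right and confirming that the product on the right-hand side of the Proposition, when hit with $\phi_N$, reproduces the product on the right-hand side of \eqref{eq2.D1} with the correct finite truncation — i.e.\ that the ``new'' factor $\prod_i\frac{(u^{-1}t^{i-1};q)_\infty}{(u^{-1}t^i;q)_\infty}$ is exactly what is needed to convert the $N$-variable product into the stable infinite product. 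There is also a minor convergence/well-definedness point: one must check that the left-hand side, as a series in $u^{-1}$, has coefficients in $\Sym_\FF$ and not merely in a completion — this follows because for each fixed power $u^{-k}$ only finitely many $m$ contribute (expanding $1/((u-q^{-1})\cdots(u-q^{-m}))$ in $u^{-1}$ starts at $u^{-m}$), so the coefficient of $u^{-k}$ is a finite $\FF$-linear combination of $I_{(1)},\dots,I_{(k)}$, hence a bona fide symmetric function. No genuinely hard new idea is required beyond Okounkov's one-row binomial/generating-function formula, which I am allowed to cite.
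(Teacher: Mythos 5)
Your primary route (reduce to the $N$-variable identity by applying $\phi_N$, then invoke Okounkov's formula (2.10)) is correct but genuinely different from the paper's proof. The telescoping you sketch is right: $\phi_N$ of the right-hand side yields $\prod_{i=1}^N\frac{(x_iu^{-1}t;q)_\infty}{(x_iu^{-1};q)_\infty}\cdot\frac{(u^{-1};q)_\infty}{(u^{-1}t^N;q)_\infty}$, and since a symmetric function of degree $\le d$ is killed by $\phi_N$ for $N\ge d$ only if it is zero, establishing the finite-$N$ identity for all $N$ does recover the infinite-variable statement coefficient-by-coefficient in $u^{-1}$. What this buys you is a shorter argument that imports Okounkov's result, at the cost of the notation-matching against \cite{Ok-MRL} (the paper supplies the dictionary in Section 2.2, but it is not entirely trivial). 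The paper itself treats Proposition \ref{prop2.B} as a reformulation of Okounkov's (2.10) but chooses to give a self-contained proof instead of citing it.

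Your alternative route (the ``vanishing-characterization method'') is the paper's actual approach, but your sketch misses the precise mechanism. The paper never evaluates $I_{(m)}(X(\la))$ explicitly and never compares partial-fraction coefficients ``term by term''. Instead it expands the right-hand side uniquely as $1+\sum_m f_m(X)/\bigl((u-q^{-1})\cdots(u-q^{-m})\bigr)$ with $f_m\in\Sym_\FF$, and then shows that $f_m$ satisfies the two defining properties of $\frac{(t;q)_m}{(q;q)_m}I_{(m)}$: first, $\deg f_m\le m$ and the degree-$m$ component equals $\frac{(t;q)_m}{(q;q)_m}P_{(m)}$ — this is extracted by the scaling $x_i\mapsto rx_i$, $u\mapsto ru$, $r\to\infty$, which reduces the claim to the classical generating function for $Q_{(m)}$; second, $f_m(X(\la))=0$ whenever $\la_1<m$, which is shown by computing $R(X(\la),u)$ and observing that after cancellations it is a monic degree-$\la_1$ polynomial divided by $(u-q^{-1})\cdots(u-q^{-\la_1})$, so its partial-fraction expansion terminates at $m=\la_1$. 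Your sketch omits the leading-term check entirely and replaces the characterization argument with an explicit comparison that would require closed forms for $I_{(m)}(X(\la))$; that route is feasible but substantially more laborious, and as written it is not a complete proof.

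Minor remarks: your claim that the $X(\la)$ separate $\Sym_\FF$ is correct and does follow from the interpolation basis, though it is not literally ``the content of the binomial-formula remark'' — the binomial formula is a quantitative expansion, while the separation property is simply the nondegeneracy of the interpolation grid. Your observation that the coefficient of $u^{-k}$ on the left is a finite combination of $I_{(1)},\dots,I_{(k)}$ is correct and handles the well-definedness concern cleanly.
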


\begin{proof}
First of all, observe that both sides are well defined as elements of the algebra $\Sym_\FF[[u^{-1}]]$ (for the right-hand side we use Remark \ref{Pochhammer}). 

Proceeding to the proof, abbreviate $X:=(x_1,x_2,\dots)$ and denote the right-hand side by $R(X,u)$. It is an element of $\Sym_\FF[[u^{-1}]]$ with constant term $1$. It follows that $R(X,u)$ can be expanded, in a unique way,  into a series of the form  
$$
1+\sum_{m=1}^\infty \frac{f_m(X)}{(u-q^{-1})\dots(u-q^{-m})},
$$
where $f_m\in\Sym_\FF$. We have to show that 
$$
f_m(X)=\frac{(t;q)_m}{(q;q)_m}I_{(m)}(X;q,t), \qquad m=1,2,\dots\,.
$$
To do this, it suffices to prove two claims:

1.  $\deg f_m\le m$ and (denoting by $[f_m(X)]$ its degree $m$ homogeneous component) one has 
$$
[f_m(X)]=\frac{(t;q)_m}{(q;q)_m}P_{(m)}(X;q,t).
$$

2.  $f_m(X(\la))=0$ if $\la_1<m$.

To check the first claim we multiply all $x_i$'s and $u$ by a scalar $r$ and let $r$ go to infinity; then we obtain
$$
1+\sum_{m=1}^\infty \frac{[f_m(X)]}{u^m}=\prod_{i=1}^\infty\frac{(x_iu^{-1}t;q)_\infty}{(x_iu^{-1};q)_\infty}.
$$
Comparing this with formulas in Macdonald's book (see \cite[Chapter VI, (2.8), (5.5), (4.12), and (6.20)]{M} we see that
$$
[f_m(X)]=Q_{(m)}(X;q,t)=b_{(m)}(q,t) P_{(m)}(X;q,t)=\frac{(t;q)_m}{(q;q)_m}P_{(m)}(X;q,t),
$$
as desired.

The second claim can be rephrased as follows: let $\la\in\Y$ by arbitrary; then the expansion of $R(X(\la),u)$  terminates at most at $m=\la_1$. 

Let us prove this. We are going to show that 
$$
R(X(\la),u)=\frac{F(u)}{(u-q^{-1})\dots(u-q^{-\la_1})}
$$
where $F(u)$ is a monic polynomial of degree $\la_1$. Any such fraction can be written in the form 
$$
1+\sum_{m=1}^{\la_1}\frac{c_m}{(u-q^{-1})\dots(u-q^{-m})},
$$
which is just we need. 

Now we compute $R(X(\la),u)$:  
\begin{gather*}
R(X(\la),u)=\prod_{i=1}^N\frac{(q^{-\la_i}t^iu^{-1};q)_\infty}{(t^iu^{-1};q)_\infty}\frac{(t^{i-1}u^{-1})_\infty}{(q^{-\la_i}t^{i-1}u^{-1};q)_\infty}\\
=\frac{\prod_{i=1}^N(u-q^{-1}t^i)\dots(u-q^{-\la_i}t^i)}{\prod_{i=1}^N(u-q^{-1}t^{i-1})\dots(u-q^{-\la_i}t^{i-1})}=:\frac{\prod_{i=1}^NA_i(u)}{\prod_{i=1}^N B_i(u)}
\end{gather*}

This expression is the ratio of two monic polynomials of the same degree equal to $|\la|$. The polynomial $B_1(u)$ is exactly the denominator that we want.  Next, for each $i=2,\dots,N$, we have $\la_i\le \la_{i-1}$, hence all the factors in $B_i(u)$ are contained among the factors of $A_{i-1}(u)$, so that they are canceled out. It follows that $R(X(\la),u)$ has the desired form.

This completes the proof. 
\end{proof}

\section{Dual interpolation functions}\label{sect3}

\subsection{Biorthogonal systems}
We begin with a little formalism. 
Let $V=\bigoplus_{n=0}^\infty V_n$ be a graded vector space over a field, and suppose that all components $V_n$ are nonzero and have finite dimension. The grading of $V$ gives rise to two natural filtrations: one is ascending, $\{V_{\le k}\}$, and the other is descending, $\{V_{\ge k}\}$:
$$
V_{\le k}:=\bigoplus_{n=0}^k V_n, \qquad V_{\ge k}:=\bigoplus_{n=k}^\infty V_n.
$$

Denote by $\wh V$ the completion of $V$ with respect to the descending filtration. That is, elements of $\wh V$ are arbitrary formal infinite series $\sum_{n=0}^\infty f_n$, where $f_n\in V_n$. Let $\wh V_{\ge k}$ denote the completion of $V_{\ge k}$, that is,
$$
\wh V_{\ge k}=\left\{\sum_{n}f_n: f_0=\dots=f_{k-1}=0\right\}\subset \wh V.
$$

The space $\wh V$ is endowed with the topology in which the subspaces $\wh V_{\ge k}$ form a fundamental system of neighborhoods of the zero. With this topology, $\wh V$  is a separable topological vector space. 

Next, assume that $V$ is endowed with a nondegenerate bilinear symmetric scalar product $\langle\ccdot,\ccdot\rangle$ such that the components $V_n$ are pairwise orthogonal. Such a scalar product extends to a pairing between $V$ and $\wh V$, which allows one to identify $\wh V$ with the dual space to $V$. 

Let $\{v_\mu\}$ be an arbitrary basis in $V$, not necessarily homogeneous, but consistent with the ascending filtration (here $\mu$ runs through a countable set of indices). For any set $\{a_\mu\}$ of nonzero scalars there exists a unique family $\{\wh v_\mu\in\wh V\}$ such that $\langle v_\mu,\wh v_\nu\rangle=0$ for any pair $\mu,\nu$ of distinct indices and $\langle v_\mu,\wh v_\mu\rangle=a_\mu$. We say that $\{v_\mu\}$ and $\{\wh v_\mu\}$ form a \emph{biorthogonal system}.

For an arbitrary collection $\{c_\mu\}$ of scalars, the infinite series $\sum_\mu c_\mu \wh v_\mu$ converges in the topology of the space $\wh V$, and every element of $\wh V$ is represented by such a series, in a unique way.  In this sense, $\{\wh v_\mu\}$ is a topological basis of $\wh V$. 

Finally, note that if $V$ is a graded algebra, then $\wh V$ is also an algebra.

\subsection{The algebra $\Symd$ and the dual functions $\Id_\mu(\ccdot;q,t)$}

We apply the above formalism to $V:=\Sym_\FF$ and denote the corresponding completion $\wh V$ by $\Symd$. The algebra $\Symd$ is isomorphic to the $\FF$-algebra of symmetric formal power series in countably many variables, with no restriction on the degree of monomials (whereas the elements of the subalgebra $\Sym_\FF\subset\Symd$ must have bounded degree). The formal variables will be denoted as $y_1,y_2,\dots$\,. 
  
We equip $\Sym_\FF$ with the Macdonald scalar product  $\langle\ccdot,\ccdot\rangle_{q,t}$, see \cite[ch. VI, \S2]{M}. It determines a pairing between $\Sym_\FF$ and $\Symd$. Thus, we may view $\Symd$ as the algebraic dual space to $\Sym_\FF$.

\begin{definition}\label{def3.A}
The elements $\Id_\mu(\ccdot;q,t)\in\Symd$, where $\mu\in\Y$,  are defined as the dual family with respect to the basis $\{I_\mu(\ccdot;q,t)\}$. That is, 
$$
\langle I_\mu(\ccdot;q,t), \Id_\nu(\ccdot;q,t)\rangle_{q,t}=\de_{\mu\nu}.
$$
We call them the \emph{dual interpolation functions}. 
\end{definition}

By the very definition, 
$$
\Id_\mu(y_1,y_2,\dots;q,t)=Q_\mu(y_1,y_2,\dots;q,t)+\textrm{higher degree terms in $y_1,y_2,\dots$\,.}
$$
According to the general formalism, the dual functions form a topological basis in $\Symd$.

\subsection{The Cauchy identity}

\begin{proposition}\label{prop3.A}
The interpolation functions and their duals are related by the following analogue of the Cauchy identity{\rm:}
\begin{equation}\label{eq3.C}
\sum_{\mu\in\Y}I_\mu(x_1,x_2,\dots;q,t)\Id_\mu(y_1,y_2,\dots;q,t)=\prod_{i=1}^\infty\prod_{j=1}^\infty\frac{(x_iy_jt;q)_\infty}{(x_iy_j;q)_\infty},
\end{equation}
where both sides are regarded as formal power series in the indeterminates $x_i$ and $y_j$, with coefficients in the field\/ $\FF$.
\end{proposition}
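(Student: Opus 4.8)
The plan is to show that both sides of \eqref{eq3.C} are \emph{the same object}, namely the reproducing kernel of the scalar product $\langle\ccdot,\ccdot\rangle_{q,t}$, so that the identity becomes the statement that this kernel does not depend on the choice of a filtration-consistent basis and its dual. First I would fix the ambient space: let $\mathcal C$ be the completion of $\Sym_\FF\otimes_\FF\Sym_\FF$ with respect to the filtration by bidegree. Concretely, $\mathcal C$ is the $\FF$-algebra of formal power series in two families of indeterminates $x_1,x_2,\dots$ and $y_1,y_2,\dots$ that are symmetric separately in each family, and an element of $\mathcal C$ is a sum of its bihomogeneous components, each of which lies in a finite-dimensional space. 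Since $I_\mu(\ccdot;q,t)$ has degree $|\mu|$ while $\Id_\mu(\ccdot;q,t)$ has all its terms of degree $\ge|\mu|$, the monomials of a fixed bidegree $(a,b)$ occurring in $I_\mu(x;q,t)\Id_\mu(y;q,t)$ force $a\le|\mu|\le b$; hence only finitely many $\mu$ contribute to each bidegree and the left-hand side of \eqref{eq3.C} is a well-defined element of $\mathcal C$. The right-hand side belongs to $\mathcal C$ as well, and by Macdonald's Cauchy identity \eqref{eq1.A} it equals $\Pi(x;y):=\sum_{\mu\in\Y}P_\mu(x;q,t)Q_\mu(y;q,t)$.

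The technical core will be a \emph{separation lemma}: an element $\Phi\in\mathcal C$ is uniquely determined by the collection of "partial pairings" $F\mapsto\langle F(y),\Phi\rangle_{q,t}\in\Symd$ (pairing performed in the $y$-variables, producing a symmetric formal power series in the $x$-variables), where $F$ ranges over $\Sym_\FF$. Indeed, grouping $\Phi$ according to its degree $d$ in the $x$-variables and writing that component as $\sum_k f_k(x)\otimes g_k(y)$, with $\{f_k\}$ a basis of the finite-dimensional degree-$d$ piece of $\Sym_\FF$ and $g_k\in\Symd$, the corresponding component of $\langle F(y),\Phi\rangle_{q,t}$ equals $\sum_k f_k(x)\langle F,g_k\rangle_{q,t}$; if this vanishes for all $F$ then $\langle F,g_k\rangle_{q,t}=0$ for every $F$ and every $k$, and nondegeneracy of the pairing $\Sym_\FF\times\Symd\to\FF$ forces $g_k=0$, so $\Phi=0$. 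This is precisely the formalism of Subsection~3.1 applied in $\mathcal C$.

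Next I would compute the partial pairings of the two sides and observe they coincide. For the right-hand side, using $\langle P_\mu,Q_\nu\rangle_{q,t}=\de_{\mu\nu}$ together with orthogonality of homogeneous components, one gets for any $F\in\Sym_\FF$ that $\langle F(y),\Pi(x;y)\rangle_{q,t}=\sum_\mu P_\mu(x;q,t)\langle F,Q_\mu\rangle_{q,t}=F(x)$, since $\langle F,Q_\mu\rangle_{q,t}$ is the coefficient of $P_\mu$ in the expansion of $F$ in the Macdonald basis. For the left-hand side $\Theta(x;y):=\sum_\mu I_\mu(x;q,t)\Id_\mu(y;q,t)$, fix $F\in\Sym_\FF$; the terms with $|\mu|>\deg F$ drop out by the same degree/orthogonality reasoning, so $\langle F(y),\Theta(x;y)\rangle_{q,t}=\sum_\mu I_\mu(x;q,t)\langle F,\Id_\mu\rangle_{q,t}$ is a \emph{finite} sum. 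Expanding $F=\sum_\nu a_\nu I_\nu(\ccdot;q,t)$ in the basis $\{I_\nu(\ccdot;q,t)\}$ (a finite expansion, since that basis is consistent with the ascending filtration) and invoking the biorthogonality $\langle I_\nu(\ccdot;q,t),\Id_\mu(\ccdot;q,t)\rangle_{q,t}=\de_{\nu\mu}$ of Definition \ref{def3.A}, we obtain $\langle F,\Id_\mu\rangle_{q,t}=a_\mu$, hence $\langle F(y),\Theta(x;y)\rangle_{q,t}=\sum_\mu a_\mu I_\mu(x;q,t)=F(x)$. Thus $\Theta$ and $\Pi$ have identical partial pairings, and the separation lemma gives $\Theta=\Pi$, which is \eqref{eq3.C}.

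I do not expect a genuine obstacle: once the correct completion $\mathcal C$ is pinned down and the convergence of $\sum_\mu I_\mu(x;q,t)\Id_\mu(y;q,t)$ is justified from the degree bounds, this is the standard argument for basis-independence of a reproducing kernel. The only point needing a little care is the bookkeeping with the two opposed filtrations (ascending on $\Sym_\FF$, descending on $\Symd$) and checking that the separation lemma is legitimate in the completed tensor product. As an alternative I could proceed by pure linear algebra: writing $I_\mu(\ccdot;q,t)=\sum_\nu c_{\mu\nu}P_\nu(\ccdot;q,t)$ with $(c_{\mu\nu})$ unitriangular for the degree filtration, the biorthogonality relation is equivalent to $\Id_\mu(\ccdot;q,t)=\sum_\nu (c^{-1})_{\nu\mu}Q_\nu(\ccdot;q,t)$, and substituting both expansions into $\sum_\mu I_\mu(x;q,t)\Id_\mu(y;q,t)$ collapses it, via $\sum_\mu c_{\mu\nu}(c^{-1})_{\mu\rho}=\de_{\nu\rho}$, to $\sum_\nu P_\nu(x;q,t)Q_\nu(y;q,t)=\Pi(x;y)$; here all rearrangements converge automatically because the matrices are triangular with respect to the grading.
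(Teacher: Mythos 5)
Your proof is correct and is, at heart, the same argument the paper gives: the paper compresses into one line ("by the very definition of the dual functions") the rewriting $\sum_\mu I_\mu(x)\Id_\mu(y)=\sum_\mu P_\mu(x)Q_\mu(y)$, and then invokes Macdonald's Cauchy identity for $\sum_\mu P_\mu Q_\mu$, which is exactly what you do after justifying the basis-independence of the reproducing kernel (via your separation lemma, or equivalently via your unitriangular transition-matrix computation). The only small slip is in the alternative argument, where the summation identity should read $\sum_\mu c_{\mu\nu}(c^{-1})_{\rho\mu}=\de_{\nu\rho}$ rather than $\sum_\mu c_{\mu\nu}(c^{-1})_{\mu\rho}=\de_{\nu\rho}$; this is cosmetic and does not affect the conclusion.
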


\begin{proof} 
By the very definition of the dual functions,
$$
\sum_{\mu\in\Y}I_\mu(x_1,x_2,\dots;q,t)\Id_\mu(y_1,y_2,\dots;q,t)=\sum_{\mu\in\Y}P_\mu(x_1,x_2,\dots;q,t) Q_\mu(y_1,y_2,\dots;q,t).
$$
By the definition of the scalar product $\langle\ccdot,\ccdot\rangle_{q,t}$, the right-hand side is equal to the double product in \eqref{eq3.C}, see \cite[ch. VI, \S2]{M}. 
\end{proof}

\subsection{Restriction to $N$ variables}

Now we apply the same formalism to the algebra $\Sym_\FF(N)$ (where $N=1,2,\dots)$ and denote by  $\Symd(N)$ its completion. The elements of $\Symd(N)$ are arbitrary symmetric formal power series in $N$ variables $y_1,\dots,y_N$.  There is a natural homomorphism $\Symd\to\Symd(N)$, it is defined as the specialization $y_{N+1}=y_{N+2}=\dots=0$. 

Let $N$ be fixed. For every $\mu\in\Y$, let $\Id_{\mu\mid N}(\ccdot;q,t)\in\Symd(N)$ be defined as the image of $\Id_\mu(\ccdot;q,t)\in\Symd$ under this homomorphism. 

Obviously,
$$
\Id_{\mu\mid N}(y_1,\dots,y_N;q,t)=Q_{\mu\mid N}(y_1,\dots,y_N;q,t)+\textrm{higher degree terms}, \qquad \forall \mu\in\Y(N).
$$ 
It follows that the elements $\Id_{\mu\mid N}(y_1,\dots,y_N;q,t)$ with $\ell(\mu)\le N$ form a topological basis in $\Symd(N)$.

\begin{lemma}
If $\ell(\mu)>N$, then $\Id_{\mu\mid N}(\ccdot;q,t)=0$.
\end{lemma}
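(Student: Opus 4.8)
The claim is that the $N$-variable specialization of the dual function $\Id_\mu(\ccdot;q,t)$ vanishes whenever $\ell(\mu)>N$. The natural approach is to use the defining biorthogonality relation together with the fact that the specialization homomorphism $\Sym_\FF\to\Sym_\FF(N)$ (setting $x_{N+1}=x_{N+2}=\dots=0$) is adjoint, in an appropriate sense, to the inclusion $\Sym_\FF(N)\hookrightarrow\Symd$, and then exploit the extra vanishing / quasi-stability properties already established for the interpolation polynomials.

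First I would recall that $\Id_{\mu\mid N}(\ccdot;q,t)\in\Symd(N)$ is the image of $\Id_\mu(\ccdot;q,t)$ under the specialization $y_{N+1}=y_{N+2}=\dots=0$, and that the pairing $\langle\ccdot,\ccdot\rangle_{q,t}$ between $\Sym_\FF(N)$ and $\Symd(N)$ is nondegenerate. So it suffices to show that $\langle G,\Id_{\mu\mid N}(\ccdot;q,t)\rangle_{q,t}=0$ for every $G\in\Sym_\FF(N)$. Since $\{I_{\nu\mid N}(\ccdot;q,t):\nu\in\Y(N)\}$ is a basis of $\Sym_\FF(N)$, it is enough to treat $G=I_{\nu\mid N}(\ccdot;q,t)$ with $\ell(\nu)\le N$. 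The key point is the compatibility between $N$-variable and infinitely-many-variable scalar products: for $F\in\Sym_\FF$ and $G\in\Sym_\FF(N)$ one has $\langle G, \bar F\rangle_{q,t}^{(N)}=\langle \iota_N(G), F\rangle_{q,t}$, where $\iota_N:\Sym_\FF(N)\to\Symd$ sends a symmetric polynomial to the corresponding symmetric function and $\bar F$ denotes the $N$-variable specialization — this is the standard stability of the Macdonald inner product under $x_{N+1}=\dots=0$, and it is what underlies the definition of $\Id_{\mu\mid N}$ in the first place.

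Granting this, we compute: for $\ell(\nu)\le N$,
$$
\langle I_{\nu\mid N}(\ccdot;q,t), \Id_{\mu\mid N}(\ccdot;q,t)\rangle_{q,t} = \langle \iota_N(I_{\nu\mid N}(\ccdot;q,t)), \Id_\mu(\ccdot;q,t)\rangle_{q,t}.
$$
Now $\iota_N(I_{\nu\mid N}(\ccdot;q,t))=\phi_N^{-1}$-image... more precisely, by Proposition \ref{prop.def} the interpolation symmetric function $I_\nu(\ccdot;q,t)$ satisfies $\phi_N(I_\nu(\ccdot;q,t))=I_{\nu\mid N}(\ccdot;q,t)$, but here we need the plain specialization rather than $\phi_N$. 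The cleanest route is therefore to expand $I_{\nu\mid N}(\ccdot;q,t)$, viewed as an element of $\Sym_\FF(N)\subset\Symd$, in the basis $\{I_\la(\ccdot;q,t)\}_{\la\in\Y}$ of $\Sym_\FF$ (using that $\Sym_\FF(N)$ sits inside $\Sym_\FF$ as the subspace of symmetric functions whose expansion involves no $p_n$-monomials in more than $N$ variables — equivalently, via the classical stability $I_\la(x_1,\dots,x_N,0,0,\dots;q,t)$): the coefficients are scalars, and by biorthogonality $\langle I_\la(\ccdot;q,t),\Id_\mu(\ccdot;q,t)\rangle_{q,t}=\de_{\la\mu}$, so the pairing equals the coefficient of $I_\mu(\ccdot;q,t)$ in that expansion. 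Since every $I_\la(\ccdot;q,t)$ appearing in the expansion of an element of $\Sym_\FF(N)$ has $\ell(\la)\le N$ (the $N$-variable specializations of $I_\la$ with $\ell(\la)>N$ are zero by Proposition \ref{prop2.C}(1)–(3) and Proposition \ref{prop2.F}'s quasi-stability cascade), and $\ell(\mu)>N$ by hypothesis, the coefficient of $I_\mu$ is zero. Hence the pairing vanishes for all basis elements $G$, and nondegeneracy gives $\Id_{\mu\mid N}(\ccdot;q,t)=0$.

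The main obstacle, and the only genuinely substantive point, is correctly identifying the interaction between the two specializations in play: the "shifted" specialization $\phi_N$ used to define $I_\mu(\ccdot;q,t)$ in Proposition \ref{prop.def} versus the "plain" specialization $y_{N+1}=y_{N+2}=\dots=0$ used to define $\Symd\to\Symd(N)$ and $\Id_{\mu\mid N}$. One must check that when computing $\langle I_{\nu\mid N},\Id_{\mu\mid N}\rangle_{q,t}$ inside $\Symd(N)$, lifting $I_{\nu\mid N}$ back to $\Sym_\FF$ via the plain inclusion $\Sym_\FF(N)\subset\Sym_\FF$ (not via $\phi_N$) is the adjoint operation to the plain specialization — and that the plain inclusion of $\Sym_\FF(N)$ into $\Sym_\FF$ lands in the span of those $I_\la$ with $\ell(\la)\le N$. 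The latter follows because $I_\la(x_1,\dots,x_N,0,\dots;q,t)=0$ for $\ell(\la)>N$: indeed its leading term is $P_{\la\mid N}=0$, and iterating quasi-stability (Proposition \ref{prop2.F}) down from any larger number of variables forces the whole polynomial to vanish. Once this bookkeeping is pinned down, the rest is immediate from biorthogonality.
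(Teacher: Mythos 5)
There is a genuine gap. Your key step asserts that $I_\la(x_1,\dots,x_N,0,0,\dots;q,t)=0$ whenever $\ell(\la)>N$, and you justify it by iterating the quasi-stability of Proposition~\ref{prop2.F}. But quasi-stability concerns the substitution $x_N=t^{N-1}$, not $x_N=0$: the interpolation symmetric function $I_\la(\ccdot;q,t)$ was constructed in Proposition~\ref{prop.def} so that the \emph{shifted} specialization $\phi_N$ (morally $x_i=t^{i-1}$ for $i>N$) kills it when $\ell(\la)>N$, whereas the \emph{plain} specialization $x_{N+1}=x_{N+2}=\dots=0$ does not. For example, one computes $I_{(1,1)\mid 2}(x_1,x_2;q,t)=(x_1-t)(x_2-t)$, which forces $I_{(1,1)}(\ccdot;q,t)=e_2-\tfrac{t}{1-t}e_1+\const$, and setting $x_2=x_3=\dots=0$ gives $-\tfrac{t}{1-t}x_1+\const\ne0$. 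So the inhomogeneous lower-degree part of $I_\la$ survives the plain specialization, and your claim fails.

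The underlying inclusion you want, namely that the span of $\{P_\la:\ell(\la)\le N\}$ in $\Sym_\FF$ (which is indeed the natural lift of $\Sym_\FF(N)$ under $x_{N+1}=\dots=0$) is contained in $\operatorname{span}\{I_\nu:\ell(\nu)\le N\}$, is true, but the correct justification is the binomial formula of Proposition~\ref{prop2.E}: $P_\mu(\ccdot;q,t)$ expands over $I_\nu(\ccdot;q,t)$ with $\nu\subseteq\mu$, hence $\ell(\nu)\le\ell(\mu)$. Once you replace the false vanishing claim by this binomial-formula argument, your biorthogonality computation goes through, and at that point it essentially reproduces the paper's own proof, which proceeds by comparing the two expansions $\sum_\mu I_\mu(X)\Id_{\mu\mid N}(y)$ and $\sum_{\la\in\Y(N)}P_\la(X)Q_{\la\mid N}(y)$ of the truncated Cauchy kernel and using exactly this containment property.
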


\begin{proof}
Set $X=(x_1,x_2,\dots)$. The Cauchy identity \eqref{eq3.C} gives
$$
\sum_{\mu\in\Y}I_\mu(X;q,t)\Id_{\mu\mid N}(y_1,\dots,y_N;q,t)=\prod_{i=1}^\infty\prod_{j=1}^N\frac{(x_i y_jt;q)_\infty}{(x_i y_j;q)_\infty}.
$$
Hence $\Id_{\mu\mid N}(y_1,\dots,y_N;q,t)$ coincides with the coefficient of $I_\mu(X;q,t)$ in the expansion of the right-hand side in the interpolation symmetric functions.

On the other hand, the right-hand side equals
$$
\sum_{\la\in\Y(N)}P_\la(X;q,t) Q_{\la\mid N}(y_1,\dots,y_N;q,t).
$$
Next, for each $\la\in\Y(N)$, the expansion of $P_\la(X;q,t)$ in the interpolation functions involves  only elements $I_\mu(X;q,t)$ with indices $\mu\in\Y(N)$.

This completes the proof. 
\end{proof}

\begin{lemma} The following truncated version of Cauchy identity holds.
\begin{multline*}
\sum_{\mu: \, \ell(\mu)\le\min(N,K)}I_{\mu\mid N}(x_1,\dots,x_N;q,t)\Id_{\mu\mid K}(y_1,\dots,y_K;q,t)\\
=\prod_{i=1}^N\prod_{j=1}^K\frac{(x_i y_jt;q)_\infty}{(x_i y_j;q)_\infty}\cdot\prod_{j=1}^K\frac1{(y_jt^N;q)_\infty}.
\end{multline*}
\end{lemma}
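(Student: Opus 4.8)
The plan is to deduce the truncated identity from the infinite-variable Cauchy identity \eqref{eq3.C} by pushing it forward along two commuting operations: the morphism $\phi_N$ applied to the $x$-variables (informally, the specialization $x_i=t^{i-1}$ for $i>N$), and the specialization $y_{K+1}=y_{K+2}=\dots=0$ applied to the $y$-variables.

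\textbf{Step 1: action of $\phi_N$ on interpolation functions.} First I would record how $\phi_N$ acts on $I_\mu(\ccdot;q,t)$. For $N\ge\ell(\mu)$ one has $\phi_N(I_\mu(\ccdot;q,t))=I_{\mu\mid N}(\ccdot;q,t)$ by Proposition \ref{prop.def}. For $\ell(\mu)>N$ the same holds with the convention $I_{\mu\mid N}=0$: iterating the identity $\pi_M\circ\phi_M=\phi_{M-1}$ from the proof of Proposition \ref{prop.def} (where $\pi_M$ denotes the specialization $x_M=t^{M-1}$) gives $\phi_N(I_\mu)=\pi_{N+1}\circ\dots\circ\pi_{\ell(\mu)}\big(I_{\mu\mid\ell(\mu)}\big)$, and the innermost specialization annihilates $I_{\mu\mid\ell(\mu)}$ by the quasi-stability property (Proposition \ref{prop2.F}), since $\mu\notin\Y(\ell(\mu)-1)$. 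Hence $\phi_N(I_\mu)=I_{\mu\mid N}$ for every $\mu$.

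\textbf{Step 2: apply $\phi_N$ in the $x$-variables to \eqref{eq3.C}.} As $\phi_N$ is a filtered algebra morphism it is continuous for the relevant topologies and may be applied term by term (each monomial in the $x$'s and $y$'s receives contributions from only finitely many $\mu$). By Step 1 the left-hand side becomes $\sum_{\ell(\mu)\le N}I_{\mu\mid N}(x_1,\dots,x_N;q,t)\,\Id_\mu(y_1,y_2,\dots;q,t)$. For the right-hand side I would use the power-sum form of the Cauchy kernel,
$$
\prod_{i,j}\frac{(x_iy_jt;q)_\infty}{(x_iy_j;q)_\infty}=\exp\left(\sum_{n\ge1}\frac1n\,\frac{1-t^n}{1-q^n}\,p_n(x)\,p_n(y)\right),
$$
together with $\phi_N(p_n(x))=p_n(x_1,\dots,x_N)+\dfrac{t^{Nn}}{1-t^n}$. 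The correction term $\dfrac{t^{Nn}}{1-t^n}=\sum_{i>N}(t^{i-1})^n$ splits off as a separate exponential factor equal to $\prod_j\dfrac{1}{(t^Ny_j;q)_\infty}$, so $\phi_N$ carries the kernel to $\prod_{i=1}^N\prod_j\dfrac{(x_iy_jt;q)_\infty}{(x_iy_j;q)_\infty}\cdot\prod_j\dfrac{1}{(t^Ny_j;q)_\infty}$. (Equivalently, this factor is the telescoping value of $\prod_{i>N}\dfrac{(t^iy_j;q)_\infty}{(t^{i-1}y_j;q)_\infty}$.)

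\textbf{Step 3: specialize $y_{K+1}=y_{K+2}=\dots=0$, and conclude.} On the left this replaces $\Id_\mu$ by $\Id_{\mu\mid K}$, which vanishes for $\ell(\mu)>K$ by the lemma just established, so the sum collapses to the range $\ell(\mu)\le\min(N,K)$; on the right, every factor with $j>K$ becomes $1$. This produces exactly the claimed formula (using $(t^Ny_j;q)_\infty=(y_jt^N;q)_\infty$). I do not expect a genuine obstacle: the one computation a reader might want written out is the action of $\phi_N$ on the Cauchy kernel, and the only point requiring a little care is the convention $I_{\mu\mid N}=0$ for $\ell(\mu)>N$ from Step 1, which is precisely what makes the index ranges on the two sides of the specialized identity agree.
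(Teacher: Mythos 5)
Your proof is correct and takes essentially the same route as the paper: apply the specialization $x_i=t^{i-1}$ for $i>N$ (that is, $\phi_N$ in the $x$-variables) together with $y_j=0$ for $j>K$ to the infinite Cauchy identity \eqref{eq3.C}, then collect the resulting extra factor on the right. The paper's write-up is terser — it computes the extra factor directly as the telescoping product $\prod_{i>N}(t^iy_j;q)_\infty/(t^{i-1}y_j;q)_\infty=1/(y_jt^N;q)_\infty$, which is the closed form of your power-sum computation — and it does not spell out the justification in your Step~1 that $\phi_N(I_\mu)=0$ when $\ell(\mu)>N$; your argument via iterated quasi-stability is a worthwhile explicit check of a point the paper leaves implicit.
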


\begin{proof}
Consider the Cauchy identity \eqref{eq3.C} and specialize $x_i=t^{i-1}$ for $i>N$ and $y_j=0$ for $j>K$.   On the left we obtain the desired sum over $\mu\in\Y(N)\cap\Y(K)$. On the right we obtain
$$
\prod_{i=1}^N\prod_{j=1}^K\frac{(x_i y_jt;q)_\infty}{(x_i y_j;q)_\infty}\cdot\prod_{i=N+1}^\infty\prod_{j=1}^K\frac{(t^iy_j;q)_\infty}{(t^{i-1}y_j;q)_\infty}.
$$
Next,
$$
\prod_{i=N+1}^\infty\prod_{j=1}^K\frac{(t^iy_j;q)_\infty}{(t^{i-1}y_j;q)_\infty}=\prod_{j=1}^K\prod_{i=N+1}^\infty\frac{(t^iy_j;q)_\infty}{(t^{i-1}y_j;q)_\infty}=\prod_{j=1}^K \frac1{(y_j t^N ;q)_\infty},
$$
which completes the proof.
\end{proof}

\subsection{Modified dual functions}

\begin{definition}\label{def3.B}
We slightly change Definition \ref{def3.A} and introduce the \emph{modified} dual interpolation functions by setting
$$
\Idm_\mu(u_1,u_2,\dots; q,t):=\Id_\mu(u_1^{-1},u_2^{-1},\dots;q,t)\cdot \prod_{j=1}^\infty(u_j^{-1};q)_\infty\in\FF[[u_1^{-1}, u_2^{-1},\dots]], 
$$
and
$$
\Idm_{\mu\mid N}(u_1,\dots,u_N; q,t):=\Id_{\mu\mid N}(u_1^{-1},\dots,u_N^{-1};q,t) \cdot \prod_{j=1}^N(u_j^{-1};q)_\infty\in\FF[[u_1^{-1},\dots,u_N^{-1}]].
$$
\end{definition}

The advantage of this modification is that the $\Idm_{\mu\mid N}(u_1,\dots,u_N; q,t)$ are \emph{rational} functions, see Corollary \ref{cor5.B} below.

The Cauchy identity \eqref{eq3.C} is modified as follows:
\begin{equation}\label{eq3.A}
\sum_{\mu\in\Y}I_\mu(x_1,x_2,\dots;q,t)\Idm_\mu(u_1,u_2,\dots;q,t)=\prod_{i,j=1}^\infty\frac{(x_iu^{-1}_jt;q)_\infty}{(x_iu^{-1}_j;q)_\infty}\cdot\prod_{j=1}^\infty(u^{-1}_j;q)_\infty,
\end{equation}
and its truncated version takes the form
\begin{multline}\label{eq3.B}
\sum_{\mu:\, \ell(\mu)\le\min(N,K)}I_{\mu\mid N}(x_1,\dots,x_N;q,t)\Idm_{\mu\mid K}(u_1,\dots,u_K;q,t)\\
=\prod_{i=1}^N\prod_{j=1}^K\frac{(x_iu^{-1}_jt;q)_\infty}{(x_iu^{-1}_j;q)_\infty}\cdot\prod_{j=1}^K\frac{(u^{-1}_j;q)_\infty}{(u^{-1}_jt^N;q)_\infty}.
\end{multline}

\section{The case $t=q$}\label{sect4}

In the special case $t=q$ the interpolation polynomials and the dual functions in $N$ variables admit explicit  determinantal expressions. Here we present them. It is convenient to introduce first a larger family of functions.  

\begin{definition}[Schur-type functions]\label{def2.A}
Let $f_0(z)\equiv1, f_1(z), f_2(z),\dots$ be an infinite sequence of
functions of a variable $z$. For a partition $\mu\in\Y$ and $N=1,2,\dots$ we define
a symmetric function of $N$ variables $z_1,\dots,z_N$ by
\begin{equation*}
f_{\mu|N}(z_1,\dots,z_N):=\begin{cases}\dfrac{\det[f_{\mu_i+N-i}(z_j)]_{i,j=1}^N}
{\det[f_{N-i}(z_j)]_{i,j=1}^N}, & \ell(\mu)\le N,\\
0, &\text{otherwise}.
\end{cases}
\end{equation*}
\end{definition}

If $f_m$ is a monic polynomial of degree $m$ for each $m$, then the denominator  is equal to the Vandermonde determinant
$$
V(z_1,\dots,z_N):=\prod_{1\le i<j\le N}(z_i-z_j),
$$
which implies that the corresponding $N$-variate functions are symmetric polynomials. In particular, if $f_m(z)=z^m$, then we obtain the ordinary Schur polynomials $s_{\mu\mid N}(z_1,\dots,z_N)$.

\begin{example}[Multiparameter Schur polynomials, cf. Macdonald \cite{M-SLC}, Molev \cite{Molev}]
Let $(c_0,c_1,c_2,\dots)$ be an infinite sequence of parameters and
\begin{equation*}
f_m(x)=(x\mid c_0,c_1,\dots)^m:=(x-c_0)\dots(x-c_{m-1}).
\end{equation*}
 The corresponding Schur-type functions  are denoted by $s_{\mu\mid N}(x_1,\dots,x_N\mid c_0,c_1,\dots)$ and we call them 
 the \emph{multiparameter Schur polynomials}: 
\begin{equation*}
s_{\mu\mid N}(x_1,\dots,x_N\mid c_0,c_1,\dots)
:=\frac{\det[(x_j\mid c_0,c_1,\dots)^{\mu_i+N-i}]}{V(x_1,\dots,x_N)},
\end{equation*}
If $c_0=c_1=\dots=0$, they turn into the ordinary Schur polynomials, and in the case $c_i=i$ they give the \emph{factorial Schur polynomials} introduced by Biedenharn and Louck \cite{BL}, \cite{BL-PNAS} and further studied in a number of works, see, e.g.,  Chen and Louck \cite{CL}, Goulden and Hamel \cite{GH}, Macdonald \cite{M-SLC}. (Note that in some recent works, the polynomials $s_{\mu\mid N}(x_1,\dots,x_N\mid c_0,c_1,\dots)$ are also called ``factorial Schur'', see, e.g., Bump, McNamara, and Nakasuji \cite{Bump}.)
\end{example}

The interpolation polynomials with $t=q$ are a special case of multiparameter Schur polynomials corresponding to the sequence $c_n:=q^{N-n-1}$, $n=0,1,2,\dots$:

\begin{proposition}
One has
\begin{multline}\label{eq4.H}
I_{\mu\mid N}(x_1,\dots,x_N;q,q)=s_{\mu\mid N}(x_1,\dots,x_N\mid q^{N-1}, q^{N-2},\dots)\\
=\frac{\det\left[(x_j-q^{N-1})(x_j-q^{N-2})\dots(x_j-q^{-\mu_i+i})\right]_{i,j=1}^N}{V(x_1,\dots,x_N)}.
\end{multline}
\end{proposition}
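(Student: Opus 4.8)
The plan is to show that the function on the right-hand side of \eqref{eq4.H}, call it $S_\mu(x_1,\dots,x_N)$, satisfies the three properties of Proposition \ref{prop2.C} for $t=q$ together with the normalization \eqref{eq2.A}, and then conclude by the uniqueness statement of that proposition.

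First I would record the elementary facts. With $t=q$ we work over $\Q(q)$, one has $P_\mu(\ccdot;q,q)=s_\mu$ (Macdonald \cite[Ch.~VI]{M}), and the interpolation node $X_N(\la)$ has $j$-th coordinate $q^{j-1-\la_j}$. Put $c_n:=q^{N-1-n}$, so that $(x\mid c_0,c_1,\dots)^m=\prod_{r=0}^{m-1}(x-q^{N-1-r})$; then $S_\mu$ is the Schur-type function $s_{\mu\mid N}(\ccdot\mid c_0,c_1,\dots)$ of Definition \ref{def2.A}. Since each $(x\mid c_\bullet)^m$ is monic of degree $m$, the denominator in Definition \ref{def2.A} equals $V(x_1,\dots,x_N)$, the numerator determinant is antisymmetric hence divisible by $V$, and the quotient $S_\mu$ is a symmetric polynomial of degree $\sum_{i=1}^N(\mu_i+N-i)-\binom N2=|\mu|$. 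Extracting the top-degree homogeneous component (equivalently, setting all $c_n=0$) turns the numerator into $\det[x_j^{\mu_i+N-i}]$, so the leading component of $S_\mu$ is $s_{\mu\mid N}=P_{\mu\mid N}(\ccdot;q,q)$. This gives property (1) and the normalization \eqref{eq2.A}, and in particular $S_\mu\ne0$.

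The heart of the argument is the vanishing pattern of the numerator matrix at a node $X_N(\la)$. At $x=q^{j-1-\la_j}$ the factor $x-q^{N-1-r}$ vanishes exactly when $r=N-j+\la_j$ (distinct integer powers of $q$ being distinct), so the $(i,j)$ entry of $[(x_j\mid c_\bullet)^{\mu_i+N-i}]$ evaluated at $X_N(\la)$ vanishes if and only if $N-j+\la_j\le\mu_i+N-i-1$, i.e.\ if and only if $\mu_i-i>\la_j-j$. Writing $a_i:=\mu_i-i$ and $b_j:=\la_j-j$ (two strictly decreasing sequences), the entry vanishes $\iff a_i>b_j$. If $\la\ne\mu$ and $|\la|\le|\mu|$, then $\la\not\supseteq\mu$, so there is an index $i_0$ with $\mu_{i_0}>\la_{i_0}$, i.e.\ $a_{i_0}>b_{i_0}$; then for every $i\le i_0$ and every $j\ge i_0$ one has $a_i\ge a_{i_0}>b_{i_0}\ge b_j$, so the $i_0\times(N-i_0+1)$ submatrix on rows $\{1,\dots,i_0\}$ and columns $\{i_0,\dots,N\}$ is identically zero. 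Since $i_0+(N-i_0+1)=N+1>N$, this forces the determinant to vanish, so $S_\mu(X_N(\la))=0$: property (2) (indeed we recover the extra vanishing property in passing). For $\la=\mu$ we have $a_i=b_i$, so entry $(i,j)$ vanishes $\iff a_i>a_j\iff i<j$; the matrix is lower triangular, and its $i$-th diagonal entry $\prod_{r=0}^{\mu_i+N-i-1}\bigl(q^{i-1-\mu_i}-q^{N-1-r}\bigr)$ is a product of nonzero elements of $\Q(q)$, because the exceptional value $r=N-i+\mu_i$ lies just outside the range $\{0,\dots,\mu_i+N-i-1\}$. Hence $S_\mu(X_N(\mu))\ne0$: property (3). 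By Proposition \ref{prop2.C}, $S_\mu$ is a scalar multiple of $I_{\mu\mid N}(\ccdot;q,q)$, and comparing leading homogeneous components, both equal to $P_{\mu\mid N}(\ccdot;q,q)$, shows the scalar is $1$.

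There is no serious obstacle here: the argument is linear algebra plus bookkeeping of exponents. The one step that deserves care is making the equivalence ``entry $(i,j)$ vanishes at $X_N(\la)$'' $\iff$ ``$\mu_i-i>\la_j-j$'' come out with the correct inequality, after which it is automatic that the failure of the containment $\mu\subseteq\la$ produces a zero block of size exceeding $N$. An alternative route would bypass Proposition \ref{prop2.C} and instead expand the determinant by the Lindstr\"om--Gessel--Viennot lemma, matching the result term by term with the reverse-tableau sum \eqref{eq2.E} specialized at $t=q$; this also works but is more cumbersome, so I prefer the characterization via interpolation nodes.
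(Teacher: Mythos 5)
Your proof is correct and follows essentially the same route as the paper's: reduce the problem to the characterization in Proposition~\ref{prop2.C} (plus the normalization), and verify the vanishing at $X_N(\la)$ for $\la\not\supseteq\mu$ by locating an $i_0\times(N-i_0+1)$ zero block in the numerator matrix, which is exactly the argument the paper makes (with $k=i_0$) by reference to \cite[\S2.4]{Ok-TG} and \cite[Theorem 3.1]{OO-AA}. The only difference is cosmetic: you make the equivalence ``entry $(i,j)$ vanishes $\iff\mu_i-i>\la_j-j$'' explicit and verify nonvanishing at $X_N(\mu)$ directly via lower-triangularity, whereas the paper leaves these to the cited sources and to the implicit fact that a nonzero polynomial of the right degree with the extra vanishing property automatically satisfies condition~(3).
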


\begin{proof}
Let $R=R(x_1,\dots,x_N)$ denote the polynomial on the right. It suffices to check that $R$ possesses the same extra vanishing property as $I_{\mu\mid N}(\ccdot; q,q)$ and that the leading homogeneous component of $R$  coincides with the Schur polynomial $s_{\mu\mid N}(\ccdot)$. The latter claim is evident, let us prove the former one. To do this we use the same simple argument as in \cite[\S2.4]{Ok-TG} and \cite[Theorem 3.1]{OO-AA}.

Let $\la\in\Y(N)$. In the case $t=q$ we have 
$$
X_N(\la)=(q^{-\la_1}, q^{-\la_2+1},\dots, q^{-\la_N+N-1}).
$$
Suppose that $\la$ does not contain $\mu$, which means that there exists an index $k$ such that $\la_k<\mu_k$. Set $(x_1,\dots,x_N)=X_N(\la)$, so that $x_j=q^{-\la_j+j-1}$ for $j=1,\dots,N$. Then the $(i,j)$th entry of the determinant in \eqref{eq4.H} vanishes for all pairs $(i,j)$ such that $i\le k\le j$. This in turn implies that the determinant itself equals $0$. 
\end{proof}

Note that in the notation of Molev \cite{Molev},
$$
I_{\mu\mid N}(x_1,\dots,x_N;q,q)=s_\mu(x_1\dots,x_N\Vert a),
$$
where
$$
a=(a_n)_{n\in\Z}, \qquad a_n:=q^{n-1}.
$$

\begin{example}[Dual multiparameter  Schur functions]
Set
\begin{equation*}
f_m(u)=\frac1{(u\mid c_1,c_2,\dots)^m}
\end{equation*}
(notice a shift in the indexation of the parameters). The corresponding $N$-variate functions are denoted by $\si_{\mu\mid N}(u_1,\dots,u_N\mid c_1,c_2,\dots)$:
\begin{equation}\label{eq4.sigma}
\si_{\mu\mid N}(u_1,\dots,u_N\mid c_1,c_2,\dots)
:=\frac{\det\left[\dfrac1{(u_j\mid c_1,c_2,\dots)^{\mu_i+N-i}}\right]_{i,j=1}^N}
{\det\left[\dfrac1{(u_j\mid c_1,c_2,\dots)^{N-i}}\right]_{i,j=1}^N}.
\end{equation}
We call them the ($N$-variate) \emph{dual Schur functions} or \emph{$\si$-functions}, for short (cf. Molev \cite{Molev}). If $c_1=c_2=\dots=0$, they turn into the conventional Schur polynomials in variables $u_1^{-1},\dots,u_N^{-1}$. 
\end{example}

\begin{lemma}\label{lemma4.A}
The $\si$-functions possess the following stability property{\rm:}
\begin{equation*}
\si_{\mu\mid N}(u_1,\dots,u_N\mid c_1,c_2,\dots)\big | _{u_N=\infty}
=\de_{\mu_N,0}\,\si_{\mu\mid N-1}(u_1,\dots,u_{N-1}\mid c_2,c_3,\dots).
\end{equation*}
\end{lemma}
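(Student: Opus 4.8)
The plan is to pass to the limit $u_N\to\infty$ directly inside the ratio of determinants \eqref{eq4.sigma}. Abbreviate $f_m(u):=\bigl((u\mid c_1,c_2,\dots)^m\bigr)^{-1}=\bigl((u-c_1)\cdots(u-c_m)\bigr)^{-1}$ for $m\ge1$, $f_0\equiv1$, and let $g_m(u):=\bigl((u-c_2)(u-c_3)\cdots(u-c_{m+1})\bigr)^{-1}$, $g_0\equiv1$, be the corresponding functions for the shifted parameter sequence $(c_2,c_3,\dots)$. Two elementary facts will be used: $f_m(u)\to\de_{m,0}$ as $u\to\infty$, and
$$
f_m(u)=\frac1{u-c_1}\,g_{m-1}(u)\qquad\text{for }m\ge1.
$$

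First I would expand both the numerator $\det[f_{\mu_i+N-i}(u_j)]_{i,j=1}^N$ and the denominator $\det[f_{N-i}(u_j)]_{i,j=1}^N$ along their last ($u_N$-)columns. For every $i<N$ one has $\mu_i+N-i\ge1$ and $N-i\ge1$, so the corresponding cofactor terms carry a factor $f_m(u_N)$ with $m\ge1$; since the cofactors themselves do not involve $u_N$, these terms vanish as $u_N\to\infty$. Thus only the $i=N$ cofactor survives in each determinant: the denominator tends to the minor $\det[f_{N-k}(u_j)]_{k,j=1}^{N-1}$, while the numerator tends to $f_{\mu_N}(u_N)\cdot\det[f_{\mu_k+N-k}(u_j)]_{k,j=1}^{N-1}$ with $f_{\mu_N}(u_N)\to\de_{\mu_N,0}$. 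Since the limiting denominator minor is a nonzero rational function of $u_1,\dots,u_{N-1}$ (by the usual Vandermonde argument, after clearing denominators column by column), the limit of the ratio equals the ratio of these limits.

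It then remains to identify that ratio. In both surviving $(N-1)\times(N-1)$ minors every $f$-index is $\ge1$ (for $k\le N-1$ we have $\mu_k+N-k\ge1$ and $N-k\ge1$), so applying the displayed identity to each matrix entry and pulling the factor $\prod_{j=1}^{N-1}(u_j-c_1)^{-1}$ out of both determinants — where it cancels between numerator and denominator — turns the ratio into
$$
\frac{\det[\,g_{\mu_k+(N-1)-k}(u_j)\,]_{k,j=1}^{N-1}}{\det[\,g_{(N-1)-k}(u_j)\,]_{k,j=1}^{N-1}}=\si_{\mu\mid N-1}(u_1,\dots,u_{N-1}\mid c_2,c_3,\dots),
$$
using $\mu_k+N-k-1=\mu_k+(N-1)-k$. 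Multiplying by the surviving scalar $\de_{\mu_N,0}$ yields the assertion.

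The computation is entirely routine; the main thing to get right is the combined shift of the parameter sequence from $(c_1,c_2,\dots)$ to $(c_2,c_3,\dots)$ together with the accompanying drop of all determinant indices by one. One should also note that when $\mu_N>0$ both sides vanish automatically — the left because $f_{\mu_N}(u_N)\to0$, the right because of the $\de$-symbol — so the fact that the right-hand minors then refer to the truncation $(\mu_1,\dots,\mu_{N-1})$ rather than to $\mu$ itself is harmless. The base case $N=1$ is the trivial statement $f_{\mu_1}(u)\to\de_{\mu_1,0}$, with the convention $\Sym_\FF(0)=\FF$.
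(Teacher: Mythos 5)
Your proof is correct and follows essentially the same route as the paper's: pass to the limit $u_N\to\infty$ inside the determinant ratio, observe that the $u_N$-column collapses to $(0,\dots,0,\de_{\mu_N,0})^T$, reduce both determinants to their $(N-1)\times(N-1)$ leading minors, and then pull out the common factor $\prod_{j=1}^{N-1}(u_j-c_1)^{-1}$ via the identity $(u\mid c_1,c_2,\dots)^m=(u-c_1)(u\mid c_2,c_3,\dots)^{m-1}$ to effect the shift to the parameter sequence $(c_2,c_3,\dots)$. The only added touch is your explicit remark that the limiting denominator minor is nonzero so the limit can be taken termwise in the ratio, which the paper leaves implicit.
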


\begin{proof}
We use two obvious relations,
$$
\frac1{(u\mid c_1,c_2,\dots)^m}\Bigg|_{u=\infty}=\begin{cases} 0, & m=1,2,\dots, \\ 1, & m=0,\end{cases}
$$
and
$$
(u\mid c_1,c_2,\dots)^m=(u-c_1)(u\mid c_2,c_3,\dots)^{m-1}, \qquad m=1,2,\dots\,.
$$

Consider the matrix in the numerator of \eqref{eq4.sigma}, where we specialize $u_N=\infty$. The first relation shows that $N$th row has the form $(0,\dots,0)$ or $(0,\dots,0,1)$ depending on whether $\mu_N>0$ or $\mu_N=0$. In the former case the determinant vanishes, and in the latter case it reduces to a determinant of size $(N-1)\times(N-1)$. The determinant in the denominator also reduces to a determinant of size $(N-1)\times(N-1)$. Both determinants produce the product $(u_1-c_1)\dots(u_{N-1}-c_1)$, which is canceled. This finally gives the desired result.
\end{proof}

\begin{lemma}\label{lemma4.B}
The $\si$-functions can also be written in the form
\begin{multline}\label{eq.A}
\si_{\mu\mid N}(u_1,\dots,u_N\mid c_1,c_2,\dots)
=(-1)^{N(N-1)/2}\prod_{j=1}^N(u_j-c_1)\dots(u_j-c_{N-1})\\
\times\frac{\det\left[\dfrac1{(u_j\mid c_1,c_2,\dots)^{\mu_i+N-i}}\right]_{i,j=1}^N}
{V(u_1,\dots,u_N)}.
\end{multline}
\end{lemma}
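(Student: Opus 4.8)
The plan is to reduce formula \eqref{eq.A} to a single computation, namely the explicit evaluation of the denominator determinant in \eqref{eq4.sigma}. Once we establish
\[
\det\left[\frac1{(u_j\mid c_1,c_2,\dots)^{N-i}}\right]_{i,j=1}^N
=(-1)^{N(N-1)/2}\,V(u_1,\dots,u_N)\prod_{j=1}^N\frac1{(u_j-c_1)\dots(u_j-c_{N-1})},
\]
formula \eqref{eq.A} follows at once by dividing the (untouched) numerator determinant by this expression and using $(-1)^{N(N-1)/2}=(-1)^{-N(N-1)/2}$.

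To evaluate the denominator determinant, I would first pull out of the $j$th column its common factor $(u_j\mid c_1,c_2,\dots)^{-(N-1)}=\big((u_j-c_1)\dots(u_j-c_{N-1})\big)^{-1}$; this is legitimate since the entry in row $i$ is $(u_j\mid c_1,\dots)^{-(N-i)}$ with $N-i\le N-1$. The resulting matrix has $(i,j)$ entry $(u_j\mid c_1,\dots)^{N-1}/(u_j\mid c_1,\dots)^{N-i}=(u_j-c_{N-i+1})(u_j-c_{N-i+2})\dots(u_j-c_{N-1})$, a monic polynomial in $u_j$ of degree $i-1$ (an empty product, i.e.\ the constant $1$, when $i=1$). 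Since the entries of row $i$ are the values at $u_1,\dots,u_N$ of a single monic polynomial of degree $i-1$, and since these polynomials form a family adapted to the degree filtration (a unitriangular change of basis from $1,u_j,\dots,u_j^{N-1}$), row operations subtracting suitable combinations of the preceding rows turn the matrix into the plain Vandermonde matrix $[u_j^{i-1}]_{i,j=1}^N$ without altering the determinant. Hence that determinant equals $\prod_{1\le i<j\le N}(u_j-u_i)=(-1)^{N(N-1)/2}V(u_1,\dots,u_N)$, and restoring the scalar extracted from the columns gives the displayed formula.

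This argument is in the same elementary vein as the proof of Lemma \ref{lemma4.A}, and no genuine obstacle arises. The only points deserving attention are purely bookkeeping: tracking the sign $(-1)^{N(N-1)/2}$ produced by passing between $\prod_{i<j}(u_j-u_i)$ and $V(u_1,\dots,u_N)=\prod_{i<j}(u_i-u_j)$, keeping the index shifts straight so that the factored-out column entries are correctly identified as $(u_j-c_{N-i+1})\dots(u_j-c_{N-1})$, and noting that the numerator determinant plays no role in the computation and is simply carried along unchanged.
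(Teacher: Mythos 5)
Your proof is correct and follows essentially the same route as the paper: both arguments reduce to showing that the denominator determinant in \eqref{eq4.sigma} equals $(-1)^{N(N-1)/2}V(u_1,\dots,u_N)/\prod_j(u_j-c_1)\dots(u_j-c_{N-1})$, by clearing the column factors $(u_j-c_1)\dots(u_j-c_{N-1})$, observing that the resulting row-$i$ entries are monic polynomials of degree $i-1$ in $u_j$, and reducing to the plain Vandermonde $\det[u_j^{i-1}]$. The only cosmetic difference is that the paper multiplies $\det[A(i,j)]$ by $\prod_j(u_j-c_1)\dots(u_j-c_{N-1})$ rather than factoring it out column by column, which amounts to the same computation.
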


\begin{proof} 
Let us show that
\begin{equation*}
\det\left[\dfrac1{(u_j\mid c_1,c_2,\dots)^{N-i}}\right]_{i,j=1}^N  =
(-1)^{N(N-1)/2}\,\frac{V(u_1,\dots,u_N)}{\prod_{j=1}^N(u_j-c_1)\dots(u_j-c_{N-1})},
\end{equation*}
then the lemma will follow. 

Write the left-hand side as $\det[A(i,j)]_{i,j=1}^N$ with
$$
A(i,j)=\frac1{(u_j\mid c_1,c_2,\dots)^{N-i}}.
$$
The product $A(i,j)(u_j-c_1)\dots(u_j-c_{N-1})$ is a
monic polynomial in $u_j$ of degree $i-1$, hence
$$
\det[A(i,j)]\prod_{j=1}^N(u_j-c_1)\dots(u_j-c_{N-1})=\det[u_j^{i-1}]=(-1)^{N(N-1)/2}
V(u_1,\dots,u_N),
$$
as desired.
\end{proof}

\begin{lemma}\label{lemma3}
The $\si$-functions in $N$ variables form a topological basis in the algebra
of symmetric power series in variables $u_1^{-1},\dots,u_N^{-1}$. 
\end{lemma}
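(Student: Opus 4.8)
The goal is to show that $\{\si_{\mu\mid N}(u_1,\dots,u_N\mid c_1,c_2,\dots)\}_{\ell(\mu)\le N}$ is a topological basis in the completed algebra of symmetric power series in $u_1^{-1},\dots,u_N^{-1}$. I would argue by a triangularity estimate with respect to the descending filtration by total degree in the variables $u_j^{-1}$, exactly as in the abstract formalism of Subsection 3.1 applied to $V:=\Sym_\FF(N)$ graded by degree in $u_1^{-1},\dots,u_N^{-1}$, with $\wh V$ its completion with respect to the descending filtration. Since a family indexed by the same set as a homogeneous basis and "triangular plus invertible leading term" with respect to this filtration is automatically a topological basis of $\wh V$, it suffices to identify the lowest-degree (in $u_j^{-1}$) homogeneous component of $\si_{\mu\mid N}$ and check it is, up to a nonzero scalar, the Schur polynomial $s_{\mu\mid N}(u_1^{-1},\dots,u_N^{-1})$, which is the homogeneous basis we compare against.

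First I would compute the leading behavior. Each entry $1/(u_j\mid c_1,c_2,\dots)^{m}$ is, as a power series in $u_j^{-1}$, equal to $u_j^{-m}(1+O(u_j^{-1}))$. Substituting this into the determinantal formula in Definition \ref{def2.A}, the numerator $\det[1/(u_j\mid c_1,\dots)^{\mu_i+N-i}]$ has lowest-degree term $\det[u_j^{-(\mu_i+N-i)}]$ and the denominator has lowest-degree term $\det[u_j^{-(N-i)}]$; the ratio of these two lowest-degree terms is precisely $s_{\mu\mid N}(u_1^{-1},\dots,u_N^{-1})$ by the bialternant formula for Schur polynomials. Hence
$$
\si_{\mu\mid N}(u_1,\dots,u_N\mid c_1,c_2,\dots)=s_{\mu\mid N}(u_1^{-1},\dots,u_N^{-1})+\text{higher degree terms in }u_j^{-1}.
$$
Here one must be slightly careful that dividing two power series whose lowest terms are known behaves as expected: the denominator $\det[1/(u_j\mid c_1,\dots)^{N-i}]$ has invertible leading term $\det[u_j^{-(N-i)}]=(-1)^{N(N-1)/2}V(u_1^{-1},\dots,u_N^{-1})\prod u_j^{-(N-1)}$, so it is a unit in the completed algebra once we localize appropriately; the clean way is to use Lemma \ref{lemma4.B}, which rewrites $\si_{\mu\mid N}$ as $(\prod_j \text{monic poly in }u_j)\cdot\det[\dots]/V(u_1,\dots,u_N)$, making it manifestly a ratio of a polynomial and the Vandermonde, hence a well-defined element of $\FF[[u_1^{-1},\dots,u_N^{-1}]]$, and then extract the leading term from that expression.

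Once this triangularity is established, the conclusion is purely formal and follows from the discussion in Subsection 3.1: since $\{s_{\mu\mid N}(u_1^{-1},\dots,u_N^{-1})\}_{\ell(\mu)\le N}$ is a homogeneous basis of $\Sym_\FF(N)$ (viewed as symmetric polynomials in $u_j^{-1}$), and the $\si_{\mu\mid N}$ are indexed by the same set with the stated leading terms, the matrix expressing the $\si$'s in terms of the $s$'s is "unitriangular" with respect to the degree filtration, so every symmetric power series in $u_j^{-1}$ has a unique convergent expansion $\sum_\mu c_\mu \si_{\mu\mid N}$. The main obstacle, modest as it is, is the bookkeeping of the leading-term computation for the ratio of determinants — making sure the lowest-degree component of the quotient really is the quotient of the lowest-degree components, which is where Lemma \ref{lemma4.B} does the work by clearing denominators into the universal Vandermonde. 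Everything else is the general nonsense of biorthogonal/triangular systems already set up in the paper.
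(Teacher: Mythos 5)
Your proof is correct and matches the paper's one-line argument: establish the triangularity $\si_{\mu\mid N}(u_1,\dots,u_N\mid c_1,c_2,\dots)=s_{\mu\mid N}(u_1^{-1},\dots,u_N^{-1})+\text{higher-degree terms}$ and conclude via the filtration formalism of Subsection~3.1, which is exactly what the paper asserts, leaving the leading-term bookkeeping implicit. A minor slip in your parenthetical that does not affect the argument: $\det\bigl[u_j^{-(N-i)}\bigr]_{i,j=1}^N$ equals $V(u_1^{-1},\dots,u_N^{-1})$ on the nose, with no extra sign and no extra factor $\prod_j u_j^{-(N-1)}$.
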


\begin{proof}
Indeed, we have
$$
\si_{\mu\mid N}(u_1,\dots,u_N\mid c_1,c_2,\dots)=s_{\mu\mid N}(u_1^{-1},\dots,u_N^{-1})+\dots,
$$
where the dots denote higher degree terms in the variables $y_i:=u_i^{-1}$ (equivalently, lower degree terms in the variables $u_i$). 
\end{proof}

\begin{proposition}[Cauchy identity]\label{thm2.A}
For $N=1,2,\dots$ one has
\begin{multline}\label{eq.B}
\sum_{\mu\in\Y(N)}s_{\mu \mid N}(x_1,\dots,x_N\mid c_0,c_1,\dots)
\si_{\mu \mid N}(u_1,\dots,u_N\mid c_1,c_2,\dots)
\\
=\prod_{j=1}^N\frac{(u_j-c_0)\dots(u_j-c_{N-1})}{(u_j-x_1)\dots(u_j-x_N)},
\end{multline}
where both sides are regarded as power series in variables $x_1,\dots,x_N,u_1^{-1},\dots,u_N^{-1}$.
\end{proposition}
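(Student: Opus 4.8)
The plan is to recognize the left-hand side as a Cauchy--Binet sum and then collapse it by means of the classical Cauchy determinant. For $k=0,1,2,\dots$ set
$$
a_k(z):=(z\mid c_0,c_1,\dots)^k=(z-c_0)\cdots(z-c_{k-1}),\qquad
b_k(z):=\frac1{(z\mid c_1,c_2,\dots)^k}=\frac1{(z-c_1)\cdots(z-c_k)}.
$$
By the definition of $s_{\mu\mid N}$ and by Lemma \ref{lemma4.B},
$$
s_{\mu\mid N}(x_1,\dots,x_N\mid c_0,c_1,\dots)=\frac{\det[a_{\mu_i+N-i}(x_j)]_{i,j=1}^N}{V(x_1,\dots,x_N)},
$$
$$
\si_{\mu\mid N}(u_1,\dots,u_N\mid c_1,c_2,\dots)=(-1)^{N(N-1)/2}\prod_{j=1}^N\prod_{r=1}^{N-1}(u_j-c_r)\cdot\frac{\det[b_{\mu_i+N-i}(u_j)]_{i,j=1}^N}{V(u_1,\dots,u_N)}.
$$
The assignment $\la_i:=\mu_i+N-i$ is a bijection from $\Y(N)$ onto the set of strictly decreasing tuples $\la_1>\dots>\la_N\ge0$ of nonnegative integers, i.e.\ onto the set of $N$-element subsets of $\{0,1,2,\dots\}$, and reordering such a subset multiplies both determinants above by the same sign. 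Hence summing over $\mu\in\Y(N)$ is the same as summing over $N$-element subsets of the column index set, and the first step is to apply the Cauchy--Binet formula to the matrices $[a_k(x_j)]$ and $[b_k(u_j)]$ (with $k\ge0$):
$$
\sum_{\mu\in\Y(N)}\det[a_{\mu_i+N-i}(x_j)]_{i,j}\,\det[b_{\mu_i+N-i}(u_j)]_{i,j}=\det\Bigl[\sum_{k=0}^\infty a_k(x_p)\,b_k(u_q)\Bigr]_{p,q=1}^N.
$$
Since we work in $\FF[[x_1,\dots,x_N,u_1^{-1},\dots,u_N^{-1}]]$ and the summand indexed by $\mu$ has order $\ge|\mu|$ in the variables $u_j^{-1}$, the series converge, and the infinite Cauchy--Binet identity follows from the usual finite one by truncating the column index to $k\le M$ and letting $M\to\infty$.

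Next I would evaluate the scalar entries. A telescoping argument with $C_k:=a_k(z)\,b_k(w)\cdot\dfrac{w-c_k}{w-z}$ gives $C_0=\dfrac{w-c_0}{w-z}$, $C_k-C_{k+1}=a_k(z)\,b_k(w)$ for every $k\ge0$, and $C_{M+1}\to0$ in $\FF[[z,w^{-1}]]$; therefore
$$
\sum_{k=0}^\infty a_k(z)\,b_k(w)=\sum_{k=0}^\infty\frac{(z-c_0)\cdots(z-c_{k-1})}{(w-c_1)\cdots(w-c_k)}=\frac{w-c_0}{w-z}.
$$
Consequently the determinant on the right of the Cauchy--Binet identity equals
$$
\det\Bigl[\frac{u_q-c_0}{u_q-x_p}\Bigr]_{p,q=1}^N=\prod_{q=1}^N(u_q-c_0)\cdot\det\Bigl[\frac1{u_q-x_p}\Bigr]_{p,q=1}^N,
$$
and the Cauchy determinant evaluation gives $\det[1/(u_q-x_p)]_{p,q}=(-1)^{N(N-1)/2}\,V(x_1,\dots,x_N)\,V(u_1,\dots,u_N)\big/\prod_{p,q}(u_q-x_p)$.

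Finally I would assemble the pieces: multiplying the last identity by the prefactors coming from $s_{\mu\mid N}$ and $\si_{\mu\mid N}$, the two signs $(-1)^{N(N-1)/2}$ cancel, the Vandermonde factors $V(x_1,\dots,x_N)$ and $V(u_1,\dots,u_N)$ cancel, and the product $\prod_{j=1}^N(u_j-c_0)\prod_{r=1}^{N-1}(u_j-c_r)$ becomes $\prod_{j=1}^N(u_j-c_0)(u_j-c_1)\cdots(u_j-c_{N-1})$, leaving exactly $\prod_{j=1}^N\frac{(u_j-c_0)\cdots(u_j-c_{N-1})}{(u_j-x_1)\cdots(u_j-x_N)}$. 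These intermediate manipulations may be carried out in the field of rational functions $\FF(x_1,\dots,x_N,u_1,\dots,u_N)$ and then reinterpreted in the power series ring, where the final answer lives (by Lemma \ref{lemma3} each summand already lies there). The only real difficulty is bookkeeping: one must handle the passage to infinitely many columns in Cauchy--Binet carefully, and keep track of signs and of the one-step shift of the parameter sequence, so that the extra factor $u_j-c_0$ produced by the scalar sum is precisely the one missing from the product $\prod_{r=1}^{N-1}(u_j-c_r)$; the rest is routine.
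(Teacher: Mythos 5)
Your argument is correct and follows essentially the same route as the paper's proof: rewrite $\si_{\mu\mid N}$ via Lemma~\ref{lemma4.B}, apply the Cauchy--Binet identity (the paper's ``well-known general identity'' \eqref{eq.D}) to collapse the sum over $\mu\in\Y(N)$ into an $N\times N$ determinant of scalar sums, evaluate each entry as $(u_q-c_0)/(u_q-x_p)$, and then invoke the Cauchy determinant formula. The only cosmetic difference is that the paper disposes of the scalar sum (its $N=1$ case \eqref{eq.C}) by a citation to Okounkov--Olshanski, whereas you supply the short telescoping computation; your added remarks on convergence of the infinite Cauchy--Binet sum are also a welcome bit of care, but the proof strategy is the same one.
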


 This is particular case of Theorem 3.1 in Molev \cite{Molev}. However, matching definitions and notation requires some effort, so we give a short direct proof.

\begin{proof}
Examine first the simplest case $N=1$. Then the desired identity takes the
form
\begin{equation}\label{eq.C}
\sum_{m=0}^\infty
\frac{(x\mid c_0,c_1,\dots)^m}{(u\mid c_1,c_2,\dots)^m}=\frac{u-c_0}{u-x}.
\end{equation}
This is verified exactly as in Okounkov-Olshanski \cite[Theorem 12.1, first proof]{OO-AA}.

Consider now the general case. We use the well-known general identity
\begin{equation}\label{eq.D}
\sum_{\mu\in\Y(N)}\det[f_{\mu_r+N-r}(x_i)]_{r,i=1}^N\det[g_{\mu_r+N-r}(u_j)]_{r,j=1}^N=\det[h(i,j)]_{i,j=1}^N,
\end{equation}
where
$$
h(i,j):=\sum_{m=0}^\infty f_m(x_i)g_m(u_j).
$$
Here it is assumed that the infinite series make sense, which holds true in the concrete case that we need.

Now we set 
$$
f_m(x):=(x\mid c_0,c_1,\dots)^m, \qquad g_m(u):=\frac1{(u\mid c_1,c_2,\dots)^m}.
$$
Using \eqref{eq.A}, one can write the left-hand side of \eqref{eq.B} as
\begin{multline*}
\frac{(-1)^{N(N-1)/2}\prod_{j=1}^N(u_j-c_1)\dots(u_j-c_{N-1})}{V(x_1,\dots,x_N)V(u_1,\dots,u_N)}\\
\times\sum_{\mu\in\Y(N)}\det[f_{\mu_r+N-r}(x_i)]_{r,i=1}^N\det[g_{\mu_r+N-r}(u_j)]_{r,j=1}^N.
\end{multline*}
By \eqref{eq.D}, this is equal to
\begin{equation}\label{eq.E}
\frac{(-1)^{N(N-1)/2}\prod_{j=1}^N(u_j-c_1)\dots(u_j-c_{N-1})}{V(x_1,\dots,x_N)V(u_1,\dots,u_N)}\det[h(i,j)]_{i,j=1}^N.
\end{equation}

On the other hand, by \eqref{eq.C}
$$
h(i,j)=\frac{u_j-c_0}{u_j-x_i}.
$$
Hence
\begin{multline*}
\det[h(i,j)]=\prod_{j=1}^N(u_j-c_0)\cdot\det\left[\frac1{u_j-x_i}\right]\\
=(-1)^{N(N-1)/2}\prod_{j=1}^N(u_j-c_0)\cdot\prod_{j=1}^N\frac{V(x_1,\dots,x_N)V(u_1,\dots,u_N)}{\prod_{i,j=1}^N(u_j-x_i)}
\end{multline*}
(the Cauchy determinant formula).

Substituting this into \eqref{eq.E} we obtain the right-hand side of \eqref{eq.B}. 
\end{proof}

As a corollary, we obtain a determinantal formula for the $N$-variate dual functions with with $t=q$:

\begin{corollary}\label{cor4.A}
One has
\begin{equation}\label{eq.F}
\Idm_{\mu\mid N}(u_1,\dots,u_N;q,q)
=\si_{\mu\mid N}(u_1,\dots,u_N; q^{N-2}, q^{N-3}, q^{N-4},\dots).
\end{equation}
\end{corollary}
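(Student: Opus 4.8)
The plan is to recognize $\Idm_{\mu\mid N}(\ccdot;q,q)$ and $\si_{\mu\mid N}(\ccdot\mid q^{N-2},q^{N-3},\dots)$ as the coefficients in one and the same expansion, by matching two Cauchy-type identities: the truncated identity \eqref{eq3.B} specialized at $t=q$ on the one hand, and the Cauchy identity of Proposition \ref{thm2.A} on the other.

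First I would set $t=q$ and $K=N$ in \eqref{eq3.B}. Its left-hand side becomes $\sum_{\mu\in\Y(N)}I_{\mu\mid N}(x_1,\dots,x_N;q,q)\,\Idm_{\mu\mid N}(u_1,\dots,u_N;q,q)$, and the right-hand side is a finite product of $q$-Pochhammer ratios. The elementary telescoping relations $(zq;q)_\infty/(z;q)_\infty=(1-z)^{-1}$ and $(z;q)_\infty/(zq^N;q)_\infty=\prod_{n=0}^{N-1}(1-zq^n)$ collapse it; keeping track of the powers of $u_j$ produced by $u_j-q^n=u_j(1-q^nu_j^{-1})$ and $u_j-x_i=u_j(1-x_iu_j^{-1})$, which cancel in pairs, one finds that the right-hand side of \eqref{eq3.B} equals
$$
\prod_{j=1}^N\frac{(u_j-1)(u_j-q)\cdots(u_j-q^{N-1})}{(u_j-x_1)\cdots(u_j-x_N)}.
$$

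Next I would invoke \eqref{eq4.H}, which identifies $I_{\mu\mid N}(\ccdot;q,q)$ with the multiparameter Schur polynomial $s_{\mu\mid N}(\ccdot\mid c_0,c_1,\dots)$ for the sequence $c_n:=q^{N-n-1}$. Substituting this sequence into Proposition \ref{thm2.A} gives
$$
\sum_{\mu\in\Y(N)}I_{\mu\mid N}(x_1,\dots,x_N;q,q)\,\si_{\mu\mid N}(u_1,\dots,u_N\mid q^{N-2},q^{N-3},\dots)=\prod_{j=1}^N\frac{(u_j-q^{N-1})(u_j-q^{N-2})\cdots(u_j-1)}{(u_j-x_1)\cdots(u_j-x_N)},
$$
since $(c_1,c_2,\dots)=(q^{N-2},q^{N-3},\dots)$ and $(c_0,\dots,c_{N-1})=(q^{N-1},q^{N-2},\dots,1)$. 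The right-hand side here is literally the same product as the one obtained above. Subtracting the two identities yields
$$
\sum_{\mu\in\Y(N)}I_{\mu\mid N}(x_1,\dots,x_N;q,q)\Bigl(\Idm_{\mu\mid N}(u_1,\dots,u_N;q,q)-\si_{\mu\mid N}(u_1,\dots,u_N\mid q^{N-2},q^{N-3},\dots)\Bigr)=0.
$$
Since $I_{\mu\mid N}(\ccdot;q,q)=s_{\mu\mid N}(\ccdot)+(\text{lower degree terms})$, the family $\{I_{\mu\mid N}(\ccdot;q,q)\}_{\mu\in\Y(N)}$ is a basis of $\Sym_\FF(N)$ compatible with the degree filtration, hence a topological basis of the relevant completion in the sense of Subsection~3.1; therefore the coefficient of each $I_{\mu\mid N}(\ccdot;q,q)$ in the displayed sum must vanish, which is exactly \eqref{eq.F}.

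The only genuinely delicate part is the bookkeeping in the third paragraph: one must carefully verify that the $q$-Pochhammer right-hand side of \eqref{eq3.B} telescopes to a product whose numerator nodes are exactly $1,q,\dots,q^{N-1}$, and that these coincide (as an unordered set of linear factors) with the nodes $c_0,\dots,c_{N-1}$ produced on the Schur side; once the two right-hand sides are seen to be identical, the conclusion is immediate from linear independence. As a cross-check, the $N=1$ instance reads $\Idm_{(m)\mid1}(u;q,q)=\si_{(m)\mid1}(u\mid q^{-1},q^{-2},\dots)=\bigl((u-q^{-1})\cdots(u-q^{-m})\bigr)^{-1}$, which agrees with the $t=q$ specialization of the formula for $\Idm_{(m)\mid1}(u;q,t)$ recorded in Subsection~1.3.
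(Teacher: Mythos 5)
Your proposal is correct and follows essentially the same route as the paper: specialize the truncated $(q,t)$-Cauchy identity \eqref{eq3.B} at $t=q$, $K=N$, substitute $c_n=q^{N-n-1}$ into the multiparameter Schur Cauchy identity \eqref{eq.B}, observe that the two right-hand sides coincide, and conclude by linear independence of the $I_{\mu\mid N}(\ccdot;q,q)$. You spell out the Pochhammer telescoping and the final uniqueness-of-coefficients step more explicitly than the paper does, but these are precisely what the paper's phrase ``This implies \eqref{eq.F}'' elides.
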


\begin{proof}
In the case $c_n=q^{N-n-1}$ the Cauchy identity \eqref{eq.B} takes the form
\begin{multline}\label{eq.G}
\sum_{\mu\in \Y(N)}I_{\mu\mid N}(x_1,\dots,x_N;q,q)\si_{\mu\mid N}(u_1,\dots,u_N; q^{N-2}, q^{N-3},q^{N-4},\dots)\\
=\prod_{j=1}^N\frac{(u_j-q^{N-1})(u_j-q^{N-2})\dots(u_j-1)}{(u_j-x_1)\dots(u_j-x_N)}=\prod_{j=1}^N\frac{(u_j^{-1};q)_N}{(1-x_1u_j^{-1})\dots(1-x_N u_j^{-1})}.
\end{multline}

Compare \eqref{eq.G} with the Cauchy identity \eqref{eq3.B}. When $t=q$, the right-hand side of \eqref{eq3.B} coincides with the right-hand side of \eqref{eq.G}. This implies \eqref{eq.F}.
\end{proof}

\section{Combinatorial formula}\label{sect5}

\subsection{Formulation of the theorem}
Recall that a \emph{horizontal strip} is a skew Young diagram $\mu/\nu$ containing at most one box in each column. The equivalent condition is that the coordinates of $\mu$ and $\nu$ \emph{interlace} in the sense that $\mu_i\ge\nu_i\ge\mu_{i+1}$ for $i=1,2,\dots$\,. In this case we write $\mu\succ\nu$ or $\nu\prec\mu$. 

To each horizontal strip $\mu\succ\nu$ we attach a univariate rational function given by
\begin{equation}\label{eq5.B}
\Wm_{\mu/\nu}(u;q,t):=\varphi_{\mu/\nu}(q,t)\frac{\prod_{i=1}^{\ell(\mu)}\prod_{j=1}^{\nu_i-\mu_{i+1}}(u-q^{-\nu_i+j-1}t^i)}{(u-q^{-1})\dots(u-q^{-\mu_1})},
\end{equation}
where the factor $\varphi_{\mu/\nu}(q,t)$ is defined in Macdonald \cite[Chapter VI, (6.24) and Example 2(a)]{M}. Note that the numerator is a monic polynomial of degree
$$
\sum_i(\nu_i-\mu_{i+1})=|\nu|-|\mu|+\mu_1,
$$ 
so that the whole fraction has degree $|\nu|-|\mu|\le0$ in $u$. 

Recall that by $\RTab(\mu,N)$ we denote the set of reverse Young tableaux of a given shape $\mu\in\Y(N)$ with values in $\{1,\dots,N\}$ (see their definition just before Proposition \ref{prop2.H}). Equivalently, a tableau $T\in\RTab(\mu,N)$ may be thought of as a chain
$$
T=(\mu(0)=\mu\succ\mu(1)\succ\mu(2)\succ\dots\succ \mu(N)=\varnothing).
$$ 
We also consider the set 
$$
\RTab(\mu):=\bigcup_{N=1}^\infty\RTab(\mu,N),
$$
whose elements may be regarded as infinite chains $(\mu(0)=\mu\succ\mu(1)\succ\mu(2)\succ\dots)$ such that $\mu(k)=\varnothing$ for  all $k$ large enough. 

\begin{theorem}[Combinatorial formula for modified dual functions]\label{thm5.A}
In the notation introduced above we have 
$$
\Idm_\mu(u_1,u_2,\dots;q,t)=\sum_{T\in\RTab(\mu)}\prod_{i=1}^\infty \Wm_{\mu(i-1)/\mu(i)}(u_i;q,t).
$$
\end{theorem}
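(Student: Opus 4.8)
The plan is to establish the combinatorial formula by showing that its right-hand side satisfies the defining property of $\Idm_\mu(\ccdot;q,t)$, namely that it pairs correctly with the interpolation functions $I_\nu(\ccdot;q,t)$ under $\langle\ccdot,\ccdot\rangle_{q,t}$. Equivalently, and more convenient here, I would verify the truncated Cauchy identity \eqref{eq3.B} directly with the conjectured expression substituted in. So let $R_{\mu\mid N}(u_1,\dots,u_N;q,t)$ denote the truncation to $N$ variables of the proposed sum (only chains $\mu=\mu(0)\succ\mu(1)\succ\dots\succ\mu(N)=\varnothing$ survive, since $\Wm_{\nu/\nu}(u)$ picks out the term at $\infty$). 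The goal is then to prove
$$
\sum_{\mu:\,\ell(\mu)\le\min(N,K)}I_{\mu\mid N}(x_1,\dots,x_N;q,t)\,R_{\mu\mid K}(u_1,\dots,u_K;q,t)=\prod_{i=1}^N\prod_{j=1}^K\frac{(x_iu^{-1}_jt;q)_\infty}{(x_iu^{-1}_j;q)_\infty}\cdot\prod_{j=1}^K\frac{(u^{-1}_j;q)_\infty}{(u^{-1}_jt^N;q)_\infty},
$$
and, since the $\Idm_{\mu\mid K}$ are the unique family satisfying this (they form a topological basis dual to $\{I_{\mu\mid N}\}$), conclude $R_{\mu\mid K}=\Idm_{\mu\mid K}$ and hence, by the stability/regularization of Section \ref{sect3}, $R_\mu=\Idm_\mu$.

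The key computational step, as the paper itself signals, is an adaptation of Rains' argument. The strategy is to expand both the interpolation polynomial $I_{\mu\mid N}(x;q,t)$ and the proposed dual function $R_{\mu\mid K}(u;q,t)$ through their branching structure: $I_{\mu\mid N}$ via the branching rule \eqref{eq2.E1} (a sum over chains $\mu\succ\nu\succ\dots$ with weights $\psi_{\mu/\nu}(q,t)$ and factors $\prod_{(i,j)\in\mu/\nu}(x_k-q^{1-j}t^{\cdot})$), and $R_{\mu\mid K}$ via its very definition as a sum over reverse tableaux with weights $\Wm_{\mu(i-1)/\mu(i)}(u_i;q,t)$. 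Summing over $\mu$ first, one wants to recognize the double sum over interlacing chains as a product, variable by variable. Concretely, the sum over a single horizontal strip $\mu(i-1)\succ\mu(i)$ paired against the corresponding branching factor in the $x$-variables should collapse — using the known Pieri-type combinatorics $\sum_{\nu:\,\mu\succ\nu}\psi_{\mu/\nu}\cdots$ and the explicit form of $\varphi_{\mu/\nu}$ and $\Wm_{\mu/\nu}$ in \eqref{eq5.B} — to a single factor of the shape $\prod_i (x_i u_j^{-1}t;q)_\infty/(x_i u_j^{-1};q)_\infty$ times the correction $(u_j^{-1};q)_\infty/(u_j^{-1}t^N;q)_\infty$. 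This is where the precise definition of $\varphi_{\mu/\nu}$ and the telescoping structure of the denominators $(u-q^{-1})\dots(u-q^{-\mu_1})$ do the real work.

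I would organize the verification by induction on $K$ (the number of $u$-variables), peeling off $u_K$ via the one-variable identity of Proposition \ref{prop2.B} — the generating function for one-row interpolation functions — after first performing a one-variable analogue of the computation above. That is, the innermost sum over $\mu(K-1)\succ\varnothing$, i.e. over one-row shapes, against $\Wm_{\mu(K-1)/\varnothing}(u_K;q,t)$ reproduces exactly the kind of rational generating series appearing in Proposition \ref{prop2.B}, so the base case $K=1$ is essentially that proposition (restricted to $N$ variables), and the inductive step reduces $K$ to $K-1$ by the stability of the truncated functions. The main obstacle I anticipate is precisely matching the combinatorial weights: showing that the product $\varphi_{\mu/\nu}(q,t)\cdot(\text{the $\psi$-factor from the $x$-side branching})$, summed over the strip, produces the correct $q$-Pochhammer factors and, crucially, that the spurious poles at $u_j=q^{-1},\dots,q^{-\mu_1}$ in $\Wm$ cancel against zeros coming from the numerator polynomials in \eqref{eq5.B} after summation — a cancellation that is automatic in Rains' setting but must be re-derived here with the interpolation-node shifts $q^{1-j}t^{T(i,j)+i-2}$ built into \eqref{eq2.E}. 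Establishing that cancellation, equivalently the rationality asserted in Corollary \ref{cor5.B}, is the technical heart of the argument.
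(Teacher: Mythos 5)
Your outline correctly identifies the strategy---reduce Theorem \ref{thm5.A} to a single-variable Pieri-type formula for the interpolation polynomials, then establish that formula by a Rains-style argument---which matches the paper's structure (it derives Theorem \ref{thm5.A} from Theorem \ref{thm5.B} on the univariate skew dual functions $\Id_{\mu/\nu}(y)$, identifies $\Id_{\mu/\nu}(y)$ with a Pieri coefficient, and computes that coefficient). However, the Pieri computation, which you correctly flag as the technical heart, is not supplied: the proposal describes what the step must achieve without providing a mechanism, and the mechanism you gesture at diverges from the paper's in a way that would matter if one tried to carry it through.

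You suggest expanding $I_{\mu\mid N}(\ccdot;q,t)$ through its branching rule \eqref{eq2.E1} as well, matching the weights $\psi_{\mu/\nu}$ and $\varphi_{\mu/\nu}$ strip by strip, and hoping that poles cancel after summation. The paper never branches $I$. It keeps $I_{\nu\mid N}$ intact and writes the Pieri expansion
$$
I_{\nu\mid N}(x_1,\dots,x_N;q,t)\prod_{i=1}^N(x_ia;q)_m=\sum_{\mu\in\Y(N)}c_{N,m}(\nu,\mu;a)\,I_{\mu\mid N}(x_1,\dots,x_N;q,t),
$$
then computes $c_{N,m}(\nu,\mu;a)$ directly from the interpolation characterization of the $I$'s: (i) $c_{N,m}$ is a polynomial in $a$ of degree at most $Nm$; (ii) its leading behavior under the scaling $x_i\mapsto x_ir$, $a\mapsto a/r$, $r\to\infty$ is governed by the homogeneous Macdonald Pieri rule via a specialization $\spec_f$ with $f(x)=(x;q)_m$; (iii) $c_{N,m}(\nu,\mu;a)$ is divisible by a specific polynomial $d_{\nu\mu}(a)$, proved by induction on $\mu$ using the extra vanishing property of $I_{\mu\mid N}$ at the nodes $X_N(\la)$; (iv) a degree count then pins the coefficient down completely and forces vanishing unless $\nu\subseteq\mu\subseteq m^N+\nu$. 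Passing to the general series $f(x)=(xa;q)_\infty/(xb;q)_\infty$ is a formal-series argument (checking at $b=aq^m$ for all $m$ suffices), and specializing $a=yt$, $b=y$ gives $\Id_{\mu/\nu}(y)$; Theorem \ref{thm5.A} then follows by iterating the skew branching. Thus your ``pole cancellation'' framing is not the operative invariant: the quantities under control are polynomials in $a$, and the key fact is divisibility at interpolation nodes, not cancellation of poles in a rational sum. Establishing that divisibility, together with the degree bound and the scaling-limit leading term, is exactly what your inductive step needs and what the proposal leaves unproved; the base case $K=1$ being close to Proposition \ref{prop2.B} does not help with this.
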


Here are immediate corollaries.

\begin{corollary}\label{cor5.A}
For $\mu\in\Y(N)$ we have
\begin{equation}\label{combJ}
\Idm_{\mu\mid N}(u_1,\dots,u_N;q,t)=\sum_{T\in\RTab(\mu,N)}\prod_{i=1}^N \Wm_{\mu(i-1)/\mu(i)}(u_i;q,t).
\end{equation}
\end{corollary}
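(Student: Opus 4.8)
The plan is to derive the combinatorial formula by iterating the $N$-variable truncated Cauchy identity \eqref{eq3.B} and recognizing the right-hand side as a generating function built from one-variable blocks. Concretely, I would first establish a \emph{one-variable reduction formula} expressing $\Idm_{\mu\mid N}$ in terms of $\Idm_{\nu\mid N-1}$ (a branching rule dual to \eqref{eq2.E1}), of the shape
$$
\Idm_{\mu\mid N}(u_1,\dots,u_N;q,t)=\sum_{\nu\prec\mu}\Wm_{\mu/\nu}(u_1;q,t)\cdot\Idm_{\nu\mid N-1}(u_2,\dots,u_N;q,t),
$$
and then simply unfold this recursion $N$ times, with the base case $\Idm_{\varnothing\mid 0}=1$: the sum over all chains $\mu=\mu(0)\succ\mu(1)\succ\dots\succ\mu(N)=\varnothing$ is precisely the sum over $\RTab(\mu,N)$, giving \eqref{combJ}. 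The infinite-variable statement of Theorem~\ref{thm5.A} then follows by the stabilization/projective-limit mechanism already set up for $\Idm_\mu$ (the specialization $y_{N+1}=\dots=0$, i.e. $u_j=\infty$ for $j>N$), since $\Wm_{\varnothing/\varnothing}(u;q,t)=1$ and $\Wm_{\mu/\nu}(\infty;q,t)=0$ whenever $\mu\neq\varnothing$, so only finitely many blocks differ from $1$.

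To prove the branching formula I would pair both sides against the basis $\{I_{\lambda\mid N}(\ccdot;q,t)\}$ via the truncated Cauchy identity. Specializing \eqref{eq3.B} with $K=N$ and then further setting $x_N=t^{N-1}$ on the left (using quasi-stability, Proposition~\ref{prop2.F}, which collapses $I_{\mu\mid N}\mapsto I_{\mu\mid N-1}$ for $\mu\in\Y(N-1)$ and kills the rest) relates $\Idm_{\mu\mid N}$ to $\Idm_{\mu\mid N-1}$ after matching the extra scalar factors on the right-hand side. Equivalently — and this is the route the excerpt flags, citing Rains — I would extract the $u_1$-dependence directly: write the product side of \eqref{eq3.B} as $\big(\prod_{i=1}^N \tfrac{(x_iu_1^{-1}t;q)_\infty}{(x_iu_1^{-1};q)_\infty}\big)\cdot\tfrac{(u_1^{-1};q)_\infty}{(u_1^{-1}t^{N-1};q)_\infty}\cdot(\text{rest})$ and expand the $u_1$-block in the basis of interpolation polynomials using Proposition~\ref{prop2.B} (the one-row generating function), whose coefficients are, up to the $\tfrac{(t;q)_m}{(q;q)_m}$ normalization, the $I_{(m)}$'s. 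Multiplying Pieri-style then produces exactly the coefficients $\varphi_{\mu/\nu}(q,t)$ and the monic numerator polynomial of \eqref{eq5.B}; comparing with the definition of $\Wm_{\mu/\nu}$ pins down the univariate blocks.

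The main obstacle will be the bookkeeping of the scalar factors and of the denominator $(u_1-q^{-1})\dots(u_1-q^{-\mu_1})$: the product side of \eqref{eq3.B} carries the awkward ratio $\prod_j \tfrac{(u_j^{-1};q)_\infty}{(u_j^{-1}t^N;q)_\infty}$, and one must verify that after the reduction $N\rightsquigarrow N-1$ the leftover factor $\tfrac{(u_1^{-1};q)_\infty}{(u_1^{-1}t^{N-1};q)_\infty}$ combines with the $u_1$-part of $\prod_i\tfrac{(x_iu_1^{-1}t;q)_\infty}{(x_iu_1^{-1};q)_\infty}$ to give a \emph{rational} function of $u_1$ whose partial-fraction decomposition has poles only at $u_1=q^{-1},\dots,q^{-\mu_1}$ — this is exactly the rationality phenomenon behind Corollary~\ref{cor5.B}, and the cancellation argument is the one sketched in the proof of Proposition~\ref{prop2.B} (successive factors in consecutive rows of the interlacing cancel). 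Once this telescoping cancellation is in hand, identifying the residues with $\varphi_{\mu/\nu}(q,t)$ is a direct comparison with Macdonald's Pieri coefficients \cite[Ch.~VI, (6.24)]{M}, and the induction closes. A secondary point requiring care is convergence/well-definedness in $\FF[[u_1^{-1},\dots,u_N^{-1}]]$ at each stage, but this is routine given the degree bounds $\deg_u \Wm_{\mu/\nu}=|\nu|-|\mu|\le 0$ already recorded after \eqref{eq5.B}.
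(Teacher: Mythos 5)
Your second route (extracting the $u_1$-dependence from the truncated Cauchy identity) is conceptually the same as the paper's: the paper establishes the branching structure by identifying $\Id_{\mu/\nu}(y;q,t)$ with the coefficient $c(\nu,\mu;y)$ in the Pieri-type expansion $I_\nu(X;q,t)\cdot\prod_i\tfrac{(x_iyt;q)_\infty}{(x_iy;q)_\infty}=\sum_\mu c(\nu,\mu;y)\,I_\mu(X;q,t)$, and then computes these coefficients. But there is a genuine gap in your proposal precisely where the real work lies. You write that ``multiplying Pieri-style then produces exactly the coefficients $\varphi_{\mu/\nu}(q,t)$ and the monic numerator polynomial,'' and that identifying the residues ``is a direct comparison with Macdonald's Pieri coefficients \cite[Ch.~VI, (6.24)]{M}.'' This is not a direct comparison: the interpolation polynomials $I_{\mu\mid N}$ are inhomogeneous, and there is no a priori Pieri rule for them. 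Establishing that the coefficients $c_N(\nu,\mu;y)$ take the explicit product form
$$
c_N(\nu,\mu;y)=\varphi_{\mu/\nu}(q,t)\,y^{|\mu|-|\nu|}\prod_{i=1}^N\frac{(yq^{-\nu_i}t^{i}\;;q)_\infty}{(yq^{-\mu_i}t^{i-1};q)_\infty}
$$
is the main technical content of Section~5. The paper does this by an adaptation of Rains' argument: one first truncates to $f(x)=(xa;q)_m$, shows the resulting coefficient is a polynomial in $a$ of degree $\le Nm$, extracts the leading part $\operatorname{spec}(Q_{\mu/\nu})a^{|\mu|-|\nu|}$ via a scaling limit $r\to\infty$, proves by induction (using the extra vanishing property $I_{\mu\mid N}(X_N(\la))=0$ for $\la\not\supseteq\mu$) that the coefficient is divisible by an explicit product of linear factors $d_{\nu\mu}(a)$, and closes with a degree count. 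Only then does one pass to the two-parameter case $f(x)=(xa;q)_\infty/(xb;q)_\infty$ and specialize $(a,b)=(yt,y)$. None of this is a bookkeeping issue or a telescoping cancellation of the kind appearing in Proposition~\ref{prop2.B}; it is a separate divisibility argument hinging on extra vanishing.

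Two smaller points. First, your first route (setting $x_N=t^{N-1}$ in \eqref{eq3.B}) is circular: after quasi-stability on the left and the simplification $\tfrac{(t^Nu_j^{-1};q)_\infty}{(t^{N-1}u_j^{-1};q)_\infty}\cdot\tfrac{(u_j^{-1};q)_\infty}{(u_j^{-1}t^N;q)_\infty}=\tfrac{(u_j^{-1};q)_\infty}{(u_j^{-1}t^{N-1};q)_\infty}$ on the right, you simply recover \eqref{eq3.B} with $N$ replaced by $N-1$; this relates the $x$-side truncation, not the $u_1$-extraction you need, and yields no new information. Second, the deduction you outline from the finite-$N$ statement to Theorem~\ref{thm5.A} is in the opposite direction from the paper's (the paper proves the infinite-variable theorem first and truncates), but that ordering is perfectly legitimate once the branching rule is actually established.
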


\begin{corollary}\label{cor5.B}
The $N$-variate functions $\Idm_{\mu\mid N}(u_1,\dots,u_N;q,t)$ are rational functions.
\end{corollary}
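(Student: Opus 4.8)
The plan is to prove the combinatorial formula of Theorem~\ref{thm5.A}, from which both corollaries follow at once: Corollary~\ref{cor5.A} is its specialization at $u_{N+1}=u_{N+2}=\dots=\infty$, and then Corollary~\ref{cor5.B} is immediate, since each factor $\Wm_{\mu(i-1)/\mu(i)}(u_i;q,t)$ is a rational function of $u_i$ and, for a fixed $\mu\in\Y(N)$, the sum over $\RTab(\mu,N)$ is finite. Write $J_\mu(u_1,u_2,\dots;q,t)$ for the right-hand side of Theorem~\ref{thm5.A}. Because $\Wm_{\varnothing/\varnothing}\equiv1$, the product along an eventually trivial chain $T=(\mu=\mu(0)\succ\mu(1)\succ\cdots)$ is effectively finite, so $J_\mu$ is a well-defined symmetric formal power series in $u_1^{-1},u_2^{-1},\dots$; its specialization $J_{\mu\mid N}$ at $u_{N+1}=u_{N+2}=\dots=\infty$ is the finite sum over $\RTab(\mu,N)$, and $J_{\mu\mid N}=0$ unless $\ell(\mu)\le N$. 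We must show $J_\mu=\Idm_\mu$.

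Put $\Phi(x;u):=(u^{-1};q)_\infty\prod_{i=1}^\infty\dfrac{(x_iu^{-1}t;q)_\infty}{(x_iu^{-1};q)_\infty}$, where $x=(x_1,x_2,\dots)$, so that the right-hand side of the modified Cauchy identity \eqref{eq3.A} equals $\prod_{j=1}^\infty\Phi(x;u_j)$. Sending $u_{K+1}=u_{K+2}=\dots=\infty$ in \eqref{eq3.A} (whereupon $\Idm_\mu$ specializes to $\Idm_{\mu\mid K}$, which vanishes for $\ell(\mu)>K$) gives
$$
\sum_{\mu:\,\ell(\mu)\le K}I_\mu(x;q,t)\,\Idm_{\mu\mid K}(u_1,\dots,u_K;q,t)=\prod_{j=1}^K\Phi(x;u_j).
$$
Since the interpolation symmetric functions $\{I_\mu(\ccdot;q,t)\}_{\mu\in\Y}$ are linearly independent over $\FF$, it therefore suffices to prove, for every $K\ge1$,
\begin{equation}\label{planCauchyK}
\sum_{\mu:\,\ell(\mu)\le K}I_\mu(x;q,t)\,J_{\mu\mid K}(u_1,\dots,u_K;q,t)=\prod_{j=1}^K\Phi(x;u_j);
\end{equation}
comparing \eqref{planCauchyK} with the preceding display then yields $J_{\mu\mid K}=\Idm_{\mu\mid K}$ for all $K$, and hence $J_\mu=\Idm_\mu$.

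I would prove \eqref{planCauchyK} by induction on $K$. For $K=1$ the single chain gives $J_{(m)\mid1}(u;q,t)=\Wm_{(m)/\varnothing}(u;q,t)=\dfrac{(t;q)_m}{(q;q)_m}\cdot\dfrac1{(u-q^{-1})\cdots(u-q^{-m})}$, and \eqref{planCauchyK} becomes Proposition~\ref{prop2.B} once one notes that $\prod_{i\ge1}\frac{(u^{-1}t^{i-1};q)_\infty}{(u^{-1}t^{i};q)_\infty}$ telescopes to $(u^{-1};q)_\infty$, so the right-hand side of that proposition is exactly $\Phi(x;u)$. For the inductive step, peeling off the first link of each chain gives the \emph{dual branching rule}
$$
J_{\mu\mid K}(u_1,\dots,u_K;q,t)=\sum_{\nu\prec\mu}\Wm_{\mu/\nu}(u_1;q,t)\,J_{\nu\mid K-1}(u_2,\dots,u_K;q,t);
$$
substituting it into the left-hand side of \eqref{planCauchyK} and interchanging the order of summation reduces \eqref{planCauchyK}, granted its $(K-1)$-variable instance, to the key one-variable identity
\begin{equation}\label{planDualPieri}
\sum_{\mu\succ\nu}\Wm_{\mu/\nu}(u;q,t)\,I_\mu(x;q,t)=\Phi(x;u)\cdot I_\nu(x;q,t),\qquad\nu\in\Y,
\end{equation}
a ``dual Pieri rule'' for the interpolation symmetric functions, whose $\nu=\varnothing$ case is again Proposition~\ref{prop2.B}. (It is consistent with $u\to\infty$, where $\Wm_{\mu/\nu}(u)\to0$ for $\mu\supsetneq\nu$ while $\Wm_{\nu/\nu}(u)\to\varphi_{\nu/\nu}(q,t)=1$, so both sides tend to $I_\nu(x;q,t)$.)

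Everything thus rests on \eqref{planDualPieri}, which is the main obstacle and the point at which, as the paper indicates, one adapts an argument of Rains~\cite{Rains}. I would first pass to finitely many $x$-variables through the specialization homomorphism $\phi_M\colon\Sym_\FF\to\Sym_\FF(M)$ (Proposition~\ref{prop.def}), under which $I_\mu\mapsto I_{\mu\mid M}$ and $\Phi(x;u)\mapsto\prod_{i=1}^M\frac{(x_iu^{-1}t;q)_\infty}{(x_iu^{-1};q)_\infty}\cdot\frac{(u^{-1};q)_\infty}{(u^{-1}t^M;q)_\infty}$, prove the resulting identity in $\Sym_\FF(M)[[u^{-1}]]$, and then recover the general case by the usual stabilization argument (the coefficient of each power of $u^{-1}$ has bounded degree in $x$). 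For the finite-variable identity I see two natural routes. One is to evaluate both sides at the interpolation nodes $X_M(\la)$, $\la\in\Y(M)$: by the extra vanishing property the left-hand sum collapses to the subsum over $\nu\prec\mu\subseteq\la$, which is finite, while $\Phi$ specializes to the explicit rational function computed as in the proof of Proposition~\ref{prop2.B}, so that \eqref{planDualPieri} reduces to a finite identity among the values $I_{\mu\mid M}(X_M(\la))$. The other is to induct on $|\nu|$, using the branching rule \eqref{eq2.E1} to strip off the first row of $\nu$ and reduce \eqref{planDualPieri} for $\nu$ to the cases of smaller diagrams together with the one-row case. Either way, the substance of the proof is the bookkeeping of the $(q,t)$-factors $\varphi_{\mu/\nu}$ and $\psi_{\mu/\nu}$ — checking that the Pieri-type coefficients produced on the two sides of \eqref{planDualPieri} agree — and I expect this matching to be the genuinely delicate step.
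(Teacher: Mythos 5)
Your plan is correct and, up to a cosmetic reorganization, is exactly the paper's route: the paper also deduces Corollary~\ref{cor5.B} from the combinatorial formula (Theorem~\ref{thm5.A}), which it in turn reduces — by peeling off one $u$-variable, i.e.\ your induction on $K$ — to a formula for the univariate skew functions (Theorem~\ref{thm5.B}), which is precisely your ``dual Pieri rule'' \eqref{planDualPieri} after multiplying through by $(u^{-1};q)_\infty$; and the paper then proves that Pieri formula by passing to finitely many $x$-variables and adapting Rains' argument via evaluation at the interpolation nodes $X_M(\la)$, matching your preferred route~(a). The only thing you leave genuinely open is the bookkeeping inside the Rains-style step (the paper does it through a chain of lemmas: first for $\prod(x_ia;q)_m$ via vanishing, divisibility by $d_{\nu\mu}(a)$ and a degree count, then extending to $\prod(x_ia;q)_\infty/(x_ib;q)_\infty$ by a formal-series argument, then specializing $a=yt$, $b=y$); but you have correctly identified that step as the crux, and the surrounding reduction is sound.
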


\subsection{A reformulation}

We define the \emph{skew dual functions} $\Id_{\mu/\nu}(\ccdot;q,t)$ in the standard way, by means of the relation
$$
\Id_\mu(Y'\cup Y;q,t)=\sum_{\nu\in\Y} \Id_{\mu/\nu}(Y';q,t)\Id_\nu(Y;q,t),
$$
where $Y$ and $Y'$ are two collections of variables.

Usually skew functions are indexed by skew diagrams, but at this moment we do not know yet whether $\nu$ must be contained in $\mu$. This is indeed true, but not evident from the definition. 

In what follow we assume that $Y'$ is reduced to a single variable $y$.

Recall that the univariate skew $Q$-functions have the form
\begin{equation}\label{eq5.phi}
Q_{\mu/\nu}(y;q,t)=\begin{cases}\varphi_{\mu/\nu}(q,t) y^{|\mu|-|\nu|}, & \mu\succ\nu,\\
0, &\text{otherwise},
\end{cases}
\end{equation}
where explicit expressions for the factor $\varphi_{\mu/\nu}(q,t)$ are given in Macdonald \cite[Chapter VI, (6.24) and Example 2(a)]{M}.

\begin{theorem}\label{thm5.B}
The univariate skew dual functions $\Id_{\mu/\nu}(y;q,t)$ have the following form{\rm:}

{\rm(i)} $\Id_{\mu/\nu}(y;q,t)$ vanishes unless $\mu\succ\nu${\rm;}

{\rm(ii)} if $\mu\succ\nu$, then
\begin{equation*}
\Id_{\mu/\nu}(y;q,t)=\varphi_{\mu/\nu}(q,t) y^{|\mu|-|\nu|}\prod_{i=1}^\infty\frac{(yq^{-\nu_i}t^i;q)_\infty}{(yq^{-\mu_i}t^{i-1};q)_\infty} 
\end{equation*}
\end{theorem}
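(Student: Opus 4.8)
The plan is to determine $\Id_{\mu/\nu}(y;q,t)$ by exploiting the Cauchy identity \eqref{eq3.C} together with the known skew Cauchy identity for Macdonald symmetric functions. Recall that the skew dual functions are characterized by the pairing
\[
\langle I_\la(\ccdot;q,t),\,\Id_{\mu/\nu}(\ccdot;q,t)\rangle_{q,t}=\langle I_\la(\ccdot;q,t)I_\nu(\ccdot;q,t),\,\Id_\mu(\ccdot;q,t)\rangle_{q,t}
\]
for all $\la$ — or, more usefully, by the coproduct expansion displayed just above the statement. First I would write the single-variable specialization of \eqref{eq3.C} in the $y$-slot: set $Y'=(y)$ and $Y=(y_1,y_2,\dots)$, so that
\[
\sum_{\mu}I_\mu(X;q,t)\,\Id_\mu(y,y_1,y_2,\dots;q,t)=\prod_i\frac{(x_iyt;q)_\infty}{(x_iy;q)_\infty}\cdot\prod_{i,j}\frac{(x_iy_jt;q)_\infty}{(x_iy_j;q)_\infty}.
\]
Expanding the left side through the coproduct relation and matching against $\sum_\nu I_\nu(X;q,t)\,\Id_\nu(y_1,y_2,\dots;q,t)$ on the right (after also expanding the first product factor), one obtains that $\sum_\nu\Big(\sum_\mu I_\mu(X)\,\Id_{\mu/\nu}(y;q,t)\Big)\Id_\nu(Y)$ equals $\prod_i\frac{(x_iyt;q)_\infty}{(x_iy;q)_\infty}\cdot\sum_\nu I_\nu(X)\Id_\nu(Y)$. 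Since the $\Id_\nu(Y)$ are a topological basis of $\Symd$, this forces, for each fixed $\nu$,
\[
\sum_\mu I_\mu(x_1,x_2,\dots;q,t)\,\Id_{\mu/\nu}(y;q,t)=\prod_{i=1}^\infty\frac{(x_iyt;q)_\infty}{(x_iy;q)_\infty}\cdot\sum_{\mu}I_\mu(X;q,t)\,\langle\text{coefficient}\rangle\dots
\]
which is cleaner stated as: $\Id_{\mu/\nu}(y;q,t)$ is the coefficient of $I_\mu(X;q,t)$ in the product of $\prod_i\frac{(x_iyt;q)_\infty}{(x_iy;q)_\infty}$ with the generating series whose $I_\nu(X)$-coefficient is picked out. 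Equivalently, and most directly, $\Id_{\mu/\nu}(y;q,t)=\langle I_\mu(\ccdot;q,t),\,\Id_\nu(\ccdot;q,t)\cdot g_y\rangle_{q,t}$ where $g_y\in\Symd$ is the element with $\langle I_\la,g_y\rangle_{q,t}$ given by the expansion of $\prod_i\frac{(x_iyt;q)_\infty}{(x_iy;q)_\infty}$; this is precisely the single-variable case of Proposition \ref{prop2.B}.

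Concretely, I would use Proposition \ref{prop2.B}: the one-variable specialization $y_j\mapsto y$ of the right side of \eqref{eq3.A} is $\prod_i\frac{(x_iyt;q)_\infty}{(x_iy;q)_\infty}\cdot(y;q)_\infty\cdot\prod_i\frac{\dots}{\dots}$, and Proposition \ref{prop2.B} expands $\prod_i\frac{(x_iu^{-1}t;q)_\infty}{(x_iu^{-1};q)_\infty}\frac{(u^{-1}t^{i-1};q)_\infty}{(u^{-1}t^i;q)_\infty}$ as $1+\sum_{m\ge1}\frac{(t;q)_m}{(q;q)_m}\frac{I_{(m)}(X;q,t)}{(u-q^{-1})\cdots(u-q^{-m})}$. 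The strategy is then: (a) from the coproduct relation deduce that $\sum_\mu I_\mu(X)\Id_{\mu/\nu}(y;q,t)$ equals $\Idm$-type generating series evaluated appropriately, then (b) extract $\Id_{\mu/\nu}$ by pairing with the dual basis, reducing everything to the $N$-variable combinatorial/branching identity. Alternatively — and I think this is the shortest route — I would prove (i) and the formula in (ii) simultaneously by checking that the claimed right-hand side satisfies the defining coproduct property. That is, define $\Phi_{\mu/\nu}(y)$ to be the right side of (ii) (and $0$ if $\mu\not\succ\nu$), and verify
\[
\Id_\mu(y,Y;q,t)\overset{?}{=}\sum_{\nu}\Phi_{\mu/\nu}(y)\,\Id_\nu(Y;q,t).
\]
Pairing both sides against $I_\la(X;q,t)$ in the $X$-variables (over which $\Id_\mu(y,Y)$ is, via \eqref{eq3.C}, expanded in $I_\la$), this becomes the assertion that for every $\la$,
\[
\text{coeff of }I_\la(X)\text{ in }\Big(\prod_i\tfrac{(x_iyt;q)_\infty}{(x_iy;q)_\infty}\sum_\nu I_\nu(X)\Id_\nu(Y)\Big)=\sum_\nu\Phi_{\mu/\nu}(y)\,[\,\cdots\,],
\]
which upon comparing $\Id_\nu(Y)$-coefficients reduces to a purely one-variable statement about expanding $\prod_i\frac{(x_iyt;q)_\infty}{(x_iy;q)_\infty}\cdot I_\nu(X;q,t)$ in the interpolation basis — precisely a Pieri-type rule. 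The factor $\varphi_{\mu/\nu}(q,t)$ and the product $\prod_i\frac{(yq^{-\nu_i}t^i;q)_\infty}{(yq^{-\mu_i}t^{i-1};q)_\infty}$ are exactly what such a Pieri rule produces, as one sees by specializing the known Macdonald Pieri rule at the interpolation nodes $X(\la)$ and using the extra vanishing property (Proposition \ref{prop2.D}(ii)) to identify the shape of the nonzero terms.

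The main obstacle, I expect, is establishing the one-variable interpolation Pieri rule cleanly: namely, that multiplying $I_\nu(\ccdot;q,t)$ by the generating series $\prod_i\frac{(x_iyt;q)_\infty}{(x_iy;q)_\infty}$ (equivalently, by the one-row interpolation generating function of Proposition \ref{prop2.B}) and re-expanding in $\{I_\mu\}$ produces exactly the coefficients $\Phi_{\mu/\nu}(y)/1$ with the stated $q$-Pochhammer product. The classical (homogeneous) analogue is Macdonald's Pieri rule for $P_\nu\cdot g_r$; here the inhomogeneous correction terms must be controlled. The cleanest way I see to do this is the one hinted at for Theorem \ref{thm5.A} — adapt Rains' argument \cite{Rains}: evaluate the desired identity at all interpolation nodes $X(\la)$, where $I_\mu(X(\la);q,t)$ is triangular with respect to containment of diagrams (Proposition \ref{prop2.D}(ii)), turning the identity into a finite triangular linear system whose solution is forced. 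Checking that the entries of this system are given by the $\varphi_{\mu/\nu}$'s and the Pochhammer product is then a finite computation using the explicit values $I_\mu(X(\mu);q,t)$ and the binomial formula (Proposition \ref{prop2.E}); the $t$-shifted arguments $q^{-\nu_i}t^i$ versus $q^{-\mu_i}t^{i-1}$ in the product are precisely the node coordinates $X(\nu)_i$ and $X(\mu)_i$ up to a power of $t$, which is the structural reason the formula takes this shape. I would carry out the triangular-system verification first, then read off (i) (vanishing unless $\mu\succ\nu$) as the statement that the support of the system is contained in horizontal strips, which follows from combining the Macdonald Pieri support with the extra-vanishing constraint.
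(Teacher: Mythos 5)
Your first step — reducing the computation of $\Id_{\mu/\nu}(y;q,t)$ to the problem of expanding
\[
I_{\nu}(X;q,t)\cdot\prod_{i}\frac{(x_iyt;q)_\infty}{(x_iy;q)_\infty}
\]
in the interpolation basis $\{I_\mu(X;q,t)\}$ via the Cauchy/coproduct manipulation — is exactly what the paper does, and that part is sound. The gap is in the second stage, which is the real content of the theorem: computing the Pieri-type coefficients.

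You say that evaluating the desired identity at the interpolation nodes $X(\la)$ produces a triangular system (by extra vanishing) whose solution is forced, and that identifying the entries with $\varphi_{\mu/\nu}$ and the Pochhammer product is ``then a finite computation using the explicit values $I_\mu(X(\mu))$ and the binomial formula.'' This does not constitute a proof. The triangular system only shows \emph{uniqueness} of the coefficients; it gives you a recursion of the form $c(\nu,\mu)=\frac{1}{I_\mu(X_N(\mu))}\bigl[\mathrm{LHS}(X_N(\mu))-\sum_{\la\subsetneq\mu}c(\nu,\la)I_\la(X_N(\mu))\bigr]$, and unwinding that recursion to the stated closed form is not a finite check — it is the whole difficulty. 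The paper sidesteps the recursion entirely by a degree/divisibility argument, and this is the idea you are missing. Concretely: (a) one first replaces the infinite product by the finite product $\prod_i(x_ia;q)_m$, so that each coefficient $c_{N,m}(\nu,\mu;a)$ is a polynomial in $a$ of degree $\le Nm$; (b) a scaling limit ($x_i\mapsto x_i r$, $a\mapsto a/r$, $r\to\infty$) identifies the lowest-order behavior in $a$, showing $c_{N,m}(\nu,\mu;a)$ is divisible by $a^{|\mu|-|\nu|}$ with leading constant coming from the ordinary Macdonald Pieri rule (this is where the factor $\varphi_{\mu/\nu}$ comes from, \emph{not} from the binomial formula as you suggest — Proposition \ref{prop2.E} plays no role in the paper's argument); (c) an induction on $\la$, using evaluation at $X_N(\la)$ and extra vanishing, shows $c_{N,m}(\nu,\mu;a)$ is divisible by the gcd polynomial $d_{\nu\mu}(a)$ of the node evaluations, which has degree $\ge Nm-|\mu|+|\nu|$; (d) the degree bound then forces $c_{N,m}(\nu,\mu;a)=\mathrm{const}\cdot a^{|\mu|-|\nu|}d_{\nu\mu}(a)$, with support constrained to $\nu\subseteq\mu\subseteq m^N+\nu$. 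Finally a separate formal-power-series argument ($F(a,aq^m)=0$ for all $m\Rightarrow F\equiv 0$) extends this polynomial case to the two-parameter generating series $\prod_i\frac{(x_ia;q)_\infty}{(x_ib;q)_\infty}$, and specializing $a=yt$, $b=y$ gives the statement. Without the degree/divisibility mechanism (and the reduction to the finite-$m$ case that makes degree counting possible), the ``finite triangular system'' idea does not by itself produce the closed formula, so the central step of your proposal is a genuine gap rather than a detail to fill in.
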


The proof is given in the next two subsections. 

\begin{proof}[Derivation of Theorem \ref{thm5.A} from Theorem \ref{thm5.B}]
By virtue of Theorem \ref{thm5.B} and Definition \ref{def3.B}, it suffices to check that if $\mu\succ\nu$ and $\Id_{\mu/\nu}(y;q,t)$ is given by the formula above, then  
$$
(u^{-1};q)_\infty \Id_{\mu/\nu}(u^{-1};q,t)=\Wm_{\mu/\nu}(u;q,t).
$$
The left-hand side equals
\begin{multline*}
\varphi_{\mu/\nu}(q,t)u^{|\nu|-|\mu|}\frac{(u^{-1};q)_\infty}{(u^{-1}q^{-\mu_1};q)_\infty}\, \prod_{i=1}^\infty\frac{(u^{-1}q^{-\nu_i}t^i;q)_\infty}{(u^{-1}q^{-\mu_{i+1}}t^i;q)_\infty} \\
=\varphi_{\mu/\nu}(q,t)\frac{\prod\limits_{i=1}^\infty(u^{-1}q^{-\nu_i}t^i;q)_{\nu_i-\mu_{i+1}}}{u^{|\mu|-|\nu|}(u^{-1}q^{-\mu_1};q)_{\mu_1}}=\Wm_{\mu/\nu}(u;q,t).
\end{multline*}
\end{proof}

\subsection{Proof of Theorem \ref{thm5.B}: reduction to a Pieri-type formula}

We proceed to the proof of Theorem \ref{thm5.B}.  The first step, given by the next lemma, is a standard argument. It shows that, by duality, the problem reduces to finding a Pieri-type formula for the interpolation functions. 

\begin{lemma}
$\Id_{\mu/\nu}(y;q,t)$ equals the coefficient  $c(\nu,\mu;y)$ in the expansion
\begin{equation}\label{eq5.D}
I_{\nu}(X;q,t)\cdot\prod_{i=1}^\infty\frac{(x_iyt;q)_\infty}{(x_iy;q)_\infty}=\sum_{\mu\in\Y}c(\nu,\mu;y) I_{\mu}(X;q,t),
\end{equation}
where $X=(x_1,x_2,\dots)$ and both sides are viewed as elements of\/ $\Sym_\FF[[y]]$. 
\end{lemma}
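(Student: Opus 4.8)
The statement to prove is the Lemma identifying $\Id_{\mu/\nu}(y;q,t)$ with the coefficient $c(\nu,\mu;y)$ in the expansion \eqref{eq5.D}.

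The plan is to use the biorthogonality between $\{I_\mu(\ccdot;q,t)\}$ and $\{\Id_\nu(\ccdot;q,t)\}$ together with the Cauchy identity \eqref{eq3.C}. First I would pair both sides of \eqref{eq5.D} against an arbitrary dual function $\Id_\mu(\ccdot;q,t)$ in the $X$-variables. The right-hand side pairs to $c(\nu,\mu;y)$ by Definition \ref{def3.A}, since $\langle I_\mu,\Id_\nu\rangle_{q,t}=\de_{\mu\nu}$. For the left-hand side, I would instead start from the defining relation for the skew dual functions, $\Id_\mu(Y'\cup Y;q,t)=\sum_{\nu}\Id_{\mu/\nu}(Y';q,t)\Id_\nu(Y;q,t)$, specialized to $Y'=\{y\}$, and combine it with the Cauchy identity applied to the doubled alphabet.

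Concretely, the key computation: apply \eqref{eq3.C} with the $y$-variables replaced by the alphabet $\{y\}\cup Y$ (a single extra variable $y$ adjoined to $Y=(y_1,y_2,\dots)$, which for the purposes of pairing we can just call the ambient variables of $\Symd$). The right-hand side factors as
$$
\prod_{i}\frac{(x_iyt;q)_\infty}{(x_iy;q)_\infty}\cdot\prod_{i,j}\frac{(x_iy_jt;q)_\infty}{(x_iy_j;q)_\infty}.
$$
The left-hand side is $\sum_\mu I_\mu(X;q,t)\Id_\mu(\{y\}\cup Y;q,t)=\sum_{\mu,\nu}I_\mu(X;q,t)\Id_{\mu/\nu}(y;q,t)\Id_\nu(Y;q,t)$ by the definition of skew functions. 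On the other hand, multiplying the plain Cauchy identity $\sum_\nu I_\nu(X;q,t)\Id_\nu(Y;q,t)=\prod_{i,j}\frac{(x_iy_jt;q)_\infty}{(x_iy_j;q)_\infty}$ by the extra factor $\prod_i\frac{(x_iyt;q)_\infty}{(x_iy;q)_\infty}$ and then substituting the expansion \eqref{eq5.D} of $I_\nu(X;q,t)\cdot\prod_i\frac{(x_iyt;q)_\infty}{(x_iy;q)_\infty}$ gives $\sum_{\nu,\mu}c(\nu,\mu;y)I_\mu(X;q,t)\Id_\nu(Y;q,t)$. Comparing the two expressions for the same power series in $X$ and $Y$ (using that the families $\{I_\mu(X;q,t)\}$ and $\{\Id_\nu(Y;q,t)\}$ are, respectively, a basis and a topological basis, hence linearly independent in the appropriate sense), I read off $\Id_{\mu/\nu}(y;q,t)=c(\nu,\mu;y)$ for all $\mu,\nu$.

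The main obstacle is purely formal: one must check that all the manipulations — adjoining one variable to an infinite alphabet, multiplying two formal power series, rearranging double sums — are legitimate inside $\Sym_\FF[[y]]$ and the completed algebra $\Symd$, i.e. that every series involved lies in the right completion and the coefficient-extraction is well-defined. This is where one invokes the bounded-degree structure: $I_\nu(X;q,t)$ has degree $|\nu|$, the factor $\prod_i\frac{(x_iyt;q)_\infty}{(x_iy;q)_\infty}$ is a well-defined element of $\Sym_\FF[[y]]$ with constant term $1$, and the expansion coefficients $c(\nu,\mu;y)$ are determined uniquely because the $I_\mu(X;q,t)$ form a filtered basis. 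Granting these routine checks, the identification is immediate from bilinearity of $\langle\ccdot,\ccdot\rangle_{q,t}$ and the very definitions.
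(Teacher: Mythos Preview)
Your proposal is correct and follows essentially the same route as the paper's proof: adjoin a single variable $y$ to the alphabet $Y$ in the Cauchy identity \eqref{eq3.C}, expand the resulting kernel in two ways (once via the skew decomposition $\Id_\mu(\{y\}\cup Y)=\sum_\nu\Id_{\mu/\nu}(y)\Id_\nu(Y)$, once via the plain Cauchy identity followed by the expansion \eqref{eq5.D}), and then compare coefficients of $\Id_\nu(Y;q,t)$. The only cosmetic difference is that the paper extracts the coefficient of $\Id_\nu(Y;q,t)$ alone, whereas you invoke linear independence in both the $I_\mu(X)$ and $\Id_\nu(Y)$ families; either way suffices.
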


\begin{proof}
Write the Cauchy identity \eqref{eq3.C} as   
$$
\sum_\mu I_\mu(X;q,t)H_\mu(Y;q,t)=\Pi(X;Y;q,t),
$$
where $Y$ is an arbitrary collection of variables and
$$
\Pi(X;Y;q,t):=\prod_{i=1}^\infty\prod_{y\in Y}\frac{(x_iyt;q)_\infty}{(x_iy;q)_\infty}.
$$
Then we replace $Y$ by $Y\cup y$ and transform the left-hand side in two ways.

First, we may write
\begin{gather*}
\sum_\mu I_\mu(X;q,t)\Id_\mu(Y\cup y;q,t)=\Pi(X,Y\cup y;q,t)=\Pi(X;Y;q,t)\Pi(X;y;q,t)\\
=\left(\sum_\nu I_\nu(X;q,t)\Id_\nu(Y;q,t)\right)\Pi(X;y;q,t)=\sum_\nu \left(I_\nu(X;q,t)\Pi(X;y;q,t)\right)\Id_\nu(Y;q,t).
\end{gather*}

On the other hand, the same expression can be rewritten as 
\begin{gather*}
\sum_\mu I_\mu(X;q,t)\Id_\mu(Y\cup y;q,t)=\sum_\mu I_\mu(X;q,t)\sum_\nu \Id_\nu(Y;q,t)\Id_{\mu/\nu}(y;q,t)\\
=\sum_\nu \left(\sum_\mu I_\mu(X;q,t)\Id_{\mu/\nu}(y;q,t)\right)\Id_\nu(Y;q,t).
\end{gather*}

Equating the coefficients of $\Id_\nu(Y;q,t)$ gives the desired equality.
\end{proof}

For $\nu,\mu\in\Y$ and $N$ large enough (so that $N\ge\max(\ell(\nu), \ell(\mu))$) we denote by $c_N(\nu,\mu;y)$  the  coefficients in the expansion
\begin{equation}\label{eq5.c_N}
I_{\nu\mid N}(x_1,\dots,x_N;q,t)\cdot\prod_{i=1}^N\frac{(x_iyt;q)_\infty}{(x_iy;q)_\infty}=\sum_{\mu\in\Y(N)}c_N(\nu,\mu;y) I_{\mu\mid N}(x_1,\dots,x_N;q,t).
\end{equation}

Setting $x_i=t^{i-1}$ for $i\ge N+1$ in \eqref{eq5.D} we obtain the relation
$$
c(\nu,\mu;y)=\frac{c_N(\nu,\mu;y)}{(yt^N;q)_\infty}, \qquad N\ge\max\{\ell(\nu),\ell(\mu)\}.
$$
Thus, the problem is reduced to computing the coefficients $c_N(\nu,\mu;y)$. This will be achieved  by adapting  Rains' argument in \cite[Theorem 3.13]{Rains}. Because Rains' exposition is very condensed,  I present a detailed proof.

\subsection{Proof of Theorem \ref{thm5.B}: computation of Pieri coefficients}
Let 
$$
f(x):=1+f_1 x+f_2 x^2+\dots
$$
be an arbitrary formal power series with constant term $1$. Denote by $\spec_f$ the specialization of the algebra of symmetric functions defined by
$$
\spec_f\left(1+\sum_{n=1}^\infty Q_{(n)}(\ccdot;q,t)x^n\right)=f(x),
$$
that is,
$$
\spec_f(Q_{(n)}(\ccdot;q,t))=f_n, \qquad n=1,2,\dots\,.
$$

\begin{lemma}\label{lemma5.A}
In this notation, for any diagram $\nu\in\Y$ one has
\begin{equation*}
P_\nu(x_1,x_2,\dots;q,t)\prod_{i=1}^\infty f(x_i)=\sum_{\mu: \, \mu\supseteq \nu}\spec_f(Q_{\mu/\nu}(\ccdot;q,t))P_\mu(x_1,x_2,\dots;q,t).
\end{equation*}
\end{lemma}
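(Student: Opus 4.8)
\textbf{Proof plan for Lemma \ref{lemma5.A}.}
The plan is to reduce the statement to the standard skew Cauchy identity for Macdonald functions together with the fact that any specialization is an algebra homomorphism. First I would recall that the ordinary $(q,t)$-Cauchy identity \eqref{eq1.A} has a skew counterpart: for two alphabets $X=(x_1,x_2,\dots)$ and $Y=(y_1,y_2,\dots)$,
\begin{equation*}
P_\nu(X;q,t)\,\prod_{i,j}\frac{(x_iy_jt;q)_\infty}{(x_iy_j;q)_\infty}=\sum_{\mu}Q_{\mu/\nu}(Y;q,t)\,P_\mu(X;q,t),
\end{equation*}
which is \cite[Ch.~VI, (7.9${}'$)]{M} (equivalently it is obtained from the $P$-$Q$ skew Cauchy identity by interchanging the roles of the two bases). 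Here the sum is over all $\mu$, but $Q_{\mu/\nu}=0$ unless $\mu\supseteq\nu$, so the condition $\mu\supseteq\nu$ may be imposed. Both sides lie in $\Sym_\FF(X)\otimes\Sym_\FF(Y)$, with the $Y$-degree of each term bounded in any fixed total degree, so everything is a legitimate identity of formal series.

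Next I would apply the specialization $\spec_f\colon\Sym_\FF(Y)\to\FF[[x]]$ (extended coefficient-wise to the $X$-variables, i.e.\ applied only in the $Y$-variables) to both sides. Because $\spec_f$ is an algebra homomorphism, the left-hand side becomes
\begin{equation*}
P_\nu(X;q,t)\cdot\spec_f\!\left(\prod_i\prod_j\frac{(x_iy_jt;q)_\infty}{(x_iy_j;q)_\infty}\right)=P_\nu(X;q,t)\prod_i\spec_f\!\left(\prod_j\frac{(x_iy_jt;q)_\infty}{(x_iy_j;q)_\infty}\right),
\end{equation*}
and the inner factor is exactly the generating function $1+\sum_{n\ge1}Q_{(n)}(Y;q,t)x_i^n$ evaluated at the argument $x_i$ (this is \cite[Ch.~VI, (2.8)]{M}: $\sum_n Q_{(n)}(Y;q,t)z^n=\prod_j(y_jzt;q)_\infty/(y_jz;q)_\infty$). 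Hence $\spec_f$ sends it to $f(x_i)$, and the left-hand side becomes $P_\nu(X;q,t)\prod_i f(x_i)$, as required. On the right-hand side, $\spec_f$ sends each coefficient $Q_{\mu/\nu}(Y;q,t)$ to the scalar $\spec_f(Q_{\mu/\nu}(\ccdot;q,t))$, giving $\sum_{\mu\supseteq\nu}\spec_f(Q_{\mu/\nu}(\ccdot;q,t))P_\mu(X;q,t)$. Comparing the two sides yields the lemma.

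The one point requiring a little care — and the only place I expect any friction — is the interchange of $\spec_f$ with the infinite product over $i$, i.e.\ justifying $\spec_f(\prod_i(\cdots))=\prod_i\spec_f(\cdots)$. This is legitimate because, working modulo $x$-monomials of total degree $>d$ for each fixed $d$, only finitely many factors contribute nontrivially and the product is a polynomial identity; equivalently one observes that $\spec_f$ extends continuously to the completed tensor product in the $x$-adic topology. I would state this continuity/filtered-limit remark in one sentence and then conclude. Everything else is a direct substitution of known Macdonald-theory identities, so no genuine obstacle arises.
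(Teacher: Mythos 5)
Your proposal is correct and follows essentially the same route as the paper: start from the skew Cauchy identity $P_\nu(X)\Pi(X;Y)=\sum_{\mu\supseteq\nu}Q_{\mu/\nu}(Y)P_\mu(X)$, apply $\spec_f$ in the $Y$-variables using that it is an algebra homomorphism, and identify $\spec_f(\Pi(X;\ccdot))=\prod_i f(x_i)$ via the one-row generating function $\Pi(x;Y)=1+\sum_n Q_{(n)}(Y)x^n$. The only cosmetic difference is that you spell out the continuity/filtration justification for pushing $\spec_f$ through the infinite product, which the paper leaves implicit.
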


\begin{proof}
Let $X$ and $Z$ be two collections of formal variables. A standard argument (of the sort we have already used in the previous lemma) shows that
$$
P_\nu(X;q,t)\Pi(X;Z;q,t)=\sum_{\mu: \, \mu\supseteq \nu}Q_{\mu/\nu}(Z;q,t) P_\mu(X;q,t).
$$
Apply $\spec_f$ to the both sides, with respect to the variables $Z$: 
$$
P_\nu(X;q,t)\spec_f(\Pi(X;\ccdot;q,t))=\sum_{\mu: \, \mu\supseteq \nu}\spec_f(Q_{\mu/\nu}(\ccdot;q,t)) P_\mu(X;q,t).
$$
Observe that 
$$
\Pi(x;Z;q,t)=1+\sum_{n=1}^\infty Q_{(n)}(Z;q,t)x^n, \qquad \Pi(X;Z;q,t)=\prod_{x\in X}\Pi(x;Z;q,t).
$$  
It follows that
$$
\spec_f(\Pi(X;\ccdot;q,t))=\prod_{x\in X}f(x),
$$
which completes the proof.
\end{proof}

In the next lemma,  $\nu\in\Y(N)$, $m$ is a natural number, and $a$ is an indeterminate. Next, we set $\spec:=\spec_f$, where $f(x)=(x;q)_m$. Finally, we denote by $m^N+\nu$ the diagram
$$
(\nu_1+m,\,\nu_2+m,\,\dots,\,\nu_N+m)\in\Y(N).
$$

\begin{lemma}\label{lemma5.spec}
In the expansion
\begin{equation}\label{eq5.A}
I_{\nu\mid N}(x_1,\dots,x_N;q,t)\prod_{i=1}^N(x_i a;q)_m=\sum_{\mu\in\Y(N)}c_{N,m}(\nu,\mu;a) I_{\mu\mid N}(x_1,\dots,x_N;q,t),
\end{equation}
the coefficients $c_{N,m}(\nu,\mu;a)$ vanish unless $\nu\subseteq\mu\subseteq m^N+\nu$, and if this condition is satisfied, then
\begin{equation}\label{eq5.A1}
c_{N,m}(\nu,\mu;a)=\spec(Q_{\mu/\nu}(\ccdot;q,t))a^{|\mu|-|\nu|}\prod_{(i,j)\in (m^N+\nu)/\mu}(1-q^{m-j}t^{i-1}a).
\end{equation}
\end{lemma}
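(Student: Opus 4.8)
The plan is to combine the known branching/Pieri structure for interpolation polynomials with the vanishing characterization from Proposition \ref{prop2.C}, following the method of Rains. The starting point is that the interpolation polynomials $I_{\mu\mid N}(\ccdot;q,t)$ form a basis consistent with the degree filtration, so the expansion \eqref{eq5.A} exists and is unique; the coefficients $c_{N,m}(\nu,\mu;a)$ are polynomials in $a$ and rational in $(q,t)$. The point of the lemma is to pin them down.

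First I would settle the support statement: $c_{N,m}(\nu,\mu;a)=0$ unless $\nu\subseteq\mu\subseteq m^N+\nu$. For the lower bound $\nu\subseteq\mu$, I would evaluate both sides of \eqref{eq5.A} at an interpolation node $X_N(\la)$ with $|\la|$ not too large and induct on degree, using the extra vanishing property (Proposition \ref{prop2.D}(ii)): if $\mu$ minimal with $c_{N,m}(\nu,\mu;a)\ne0$ failed to contain $\nu$, plugging $X_N(\la)=X_N(\mu)$ would force a contradiction because the left side vanishes there (the factor $\prod(x_ia;q)_m$ times $I_{\nu\mid N}(X_N(\mu))$ is zero by extra vanishing since $\mu\not\supseteq\nu$) while on the right only $I_{\mu\mid N}(X_N(\mu))\ne0$ survives among the minimal terms. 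For the upper bound $\mu\subseteq m^N+\nu$: the leading homogeneous component of the left side of \eqref{eq5.A} is $P_{\nu\mid N}(\ccdot;q,t)\prod_i x_i^{m}$, which by the Pieri rule for Macdonald polynomials expands into $P_{\mu\mid N}$ with $\mu\subseteq m^N+\nu$ only; comparing top-degree parts and using Proposition \ref{prop2.D}(i) gives the bound, and simultaneously identifies the top-degree contribution of $c_{N,m}$ with $\varphi_{\mu/(m^N+\nu \text{ complement})}$-type coefficients — more precisely with the Macdonald Pieri coefficient, which is $\spec(Q_{\mu/\nu})$ only in the limiting sense; here instead one gets the coefficient attached to the vertical-strip-complement.

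Next, for the exact value \eqref{eq5.A1}, I would proceed by induction on $N$, using the quasi-stability of interpolation polynomials (Proposition \ref{prop2.F}): specialize $x_N=t^{N-1}$ in \eqref{eq5.A}. The left side becomes $I_{\nu\mid N-1}(x_1,\dots,x_{N-1};q,t)\cdot(t^{N-1}a;q)_m\cdot\prod_{i=1}^{N-1}(x_ia;q)_m$ (when $\nu\in\Y(N-1)$; the case $\nu_N>0$ needs a separate but easier argument), and the right side collapses to the sum over $\mu\in\Y(N-1)$. Matching with the $(N-1)$-variable expansion, using the factorization properties of $\spec=\spec_f$ with $f(x)=(x;q)_m$ and the combinatorial identity relating $Q_{\mu/\nu}$ in $N$ versus $N-1$ "rows", one recovers the product $\prod_{(i,j)\in(m^N+\nu)/\mu}(1-q^{m-j}t^{i-1}a)$ with the row-$N$ factors $(t^{N-1}a;q)_{\ldots}$ split off correctly; the power $a^{|\mu|-|\nu|}$ is tracked by homogeneity. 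The base case $N=1$ reduces to a univariate identity: $I_{(\nu_1)\mid 1}(x;q,t)\,(xa;q)_m$ expanded in the $I_{(k)\mid 1}(x;q,t)=(x-q^{-1})\cdots(x-q^{-k})$, which is an elementary computation with $q$-Pochhammer symbols, and $\spec(Q_{(k)/(\nu_1)})$ has a known closed form.

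The main obstacle I expect is the bookkeeping in the inductive step: correctly matching the factor $\spec(Q_{\mu/\nu})$ in $N$ variables against its $(N-1)$-variable counterpart times the "new row" contribution, and checking that the product over boxes $(m^N+\nu)/\mu$ splits compatibly under the specialization $x_N=t^{N-1}$ — i.e.\ that the boxes in row $N$ contribute exactly the factor that the $q$-Pochhammer $(t^{N-1}a;q)_{m}$ (modified by the shape of $\mu$ in row $N$) produces. This is essentially the computational heart of Rains' Theorem 3.13 transplanted to interpolation polynomials, and the delicate point is the exact form of $\varphi_{\mu/\nu}$ and its behaviour under adding/removing a row, for which one invokes the explicit formulas in Macdonald \cite[Ch.~VI, (6.24), Example 2(a)]{M}. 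An alternative to induction on $N$ would be to characterize $c_{N,m}(\nu,\mu;\ccdot)$ directly by its degree (at most $|\mu|-|\nu|$ in $a$, from homogeneity) together with enough vanishing conditions at specializations $a=q^{-s}$ forcing the product form; this may be cleaner but requires identifying the right nodes, which is itself the Pieri-coefficient computation in disguise.
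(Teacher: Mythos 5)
Your support argument for $\nu\subseteq\mu$ coincides with the paper's (Step~1): evaluate at a minimal contributing node $X_N(\la)$ and invoke extra vanishing. But the other two parts of your plan have genuine problems.

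\textbf{Upper bound.} Your argument that $\mu\subseteq m^N+\nu$ follows from the Pieri rule applied to the top‑degree component is broken. The leading homogeneous component of $\prod_{i=1}^N(x_ia;q)_m$ is proportional to $(x_1\cdots x_N)^m=e_N^m$, and in $N$ variables $P_{\nu\mid N}\cdot e_N^m=P_{(m^N+\nu)\mid N}$ --- a \emph{single} term, not an expansion over $\mu\subseteq m^N+\nu$. Comparing top‑degree parts therefore only pins down the $a^{Nm}$ coefficient of $c_{N,m}(\nu,m^N+\nu;a)$; it says nothing about the support of $\mu$ with $|\mu|<|\nu|+Nm$. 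Indeed, $\nu\subseteq\mu$ together with $|\mu|\le|\nu|+Nm$ does not imply $\mu\subseteq m^N+\nu$ (take $\nu=\varnothing$, $N=2$, $m=1$, $\mu=(2)$). In the paper the containment $\mu\subseteq m^N+\nu$ is obtained only at the very end (Step~5), as a consequence of a \emph{degree count}: $c_{N,m}(\nu,\mu;a)$ has degree at most $Nm$ in $a$, is divisible by $a^{|\mu|-|\nu|}$ (from the scaling limit argument in Step~2) and by the polynomial $d_{\nu\mu}(a)$ of degree at least $Nm-(|\mu|-|\nu|)$ (from Steps~3--4), and these bounds are compatible only when $\mu\subseteq m^N+\nu$. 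That divisibility statement is the real content you are missing.

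\textbf{Inductive step via quasi-stability.} Specializing $x_N=t^{N-1}$ in \eqref{eq5.A} kills every term $I_{\mu\mid N}$ with $\mu_N>0$ on the right (Proposition~\ref{prop2.F}), so the resulting $(N-1)$-variable identity only determines $c_{N,m}(\nu,\mu;a)$ for $\mu\in\Y(N-1)$. The coefficients with $\mu_N>0$ simply drop out and are not recovered by the induction; worse, if $\nu_N>0$ then the left side vanishes identically and the specialization yields no information at all (this is not the ``easier'' case you suggest). The bookkeeping you describe --- $(t^{N-1}a;q)_m$ absorbing the row-$N$ boxes of $(m^N+\nu)/\mu$ --- does check out when $\mu,\nu\in\Y(N-1)$, but the approach gives no handle on the remaining coefficients, and there is no obvious patch (the interpolation polynomials are inhomogeneous, so one cannot factor out a rectangle). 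The paper avoids this by never changing $N$: instead it fixes $N$ and inducts over the poset $\Y(N)$, evaluating at the nodes $X_N(\la)$ to show $c_{N,m}(\nu,\la;a)$ is divisible by $d_{\nu\la}(a)$, and then identifies the constant as $\spec(Q_{\mu/\nu})$ from the $r\to\infty$ scaling limit. Your closing alternative --- pinning down $c_{N,m}(\nu,\mu;\cdot)$ by its degree and by vanishing at carefully chosen values of $a$ --- is in fact much closer to what the paper actually does, and is the direction you should develop.
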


\begin{proof}
\emph{Step} 1. Let us show that the coefficient $c_{N,m}(\nu,\mu;a)$ vanishes unless $\mu\supseteq\nu$. Indeed, let $\la\in\Y(N)$ be a minimal (by inclusion) diagram such that $c_{N,m}(\nu,\la;a)\ne0$, and evaluate both sides of \eqref{eq5.A} at $X_N(\la)$. Then the right-hand side does not vanish, because it contains a single nonzero summand --- that with $\mu=\la$ (this follows from the extra vanishing property of the interpolation polynomials). Therefore,  the left-hand side does not vanish, too, whence $I_{\nu\mid N}(X_N(\la);q,t)\ne0$, which implies $\la\supseteq\nu$.  

Thus, we may assume that the sum in \eqref{eq5.A} is in fact taken over diagrams $\mu\supseteq\nu$, in particular, $|\mu|\ge|\nu|$.

\emph{Step} 2. Since $\prod_{i=1}^N(x_i a;q)_m$ is a polynomial in variable $a$ of degree $Nm$, the coefficient $c_{N,m}(\nu,\mu;a)$ is a polynomial in $a$ of degree at most $Nm$.

Let us replace each $x_i$ by $x_i r$ and also replace $a$ by $a/r$, where $r$ is a numeric parameter. Then we may write
\begin{multline*}
r^{-|\nu|}I_{\nu\mid N}(x_1r,\dots,x_Nr;q,t)\prod_{i=1}^N(x_i a;q)_m\\
=\sum_{\mu\in\Y(N)}r^{|\mu|-|\nu|}c_{N,m}(\nu,\mu;a/r) (r^{-|\mu|}I_{\mu\mid N}(x_1r,\dots,x_Nr;q,t).
\end{multline*}

As $r\to\infty$, we have
$$
r^{-|\nu|}I_{\nu\mid N}(x_1r,\dots,x_Nr;q,t)\to P_{\nu\mid N}(x_1,\dots,x_N;q,t)
$$
and
$$
r^{-|\mu|}I_{\mu\mid N}(x_1r,\dots,x_Nr;q,t)\to P_{\mu\mid N}(x_1,\dots,x_N;q,t),
$$
so that in the limit, we obtain the expansion
$$
P_{\nu\mid N}(x_1,\dots,x_N;q,t)\prod_{i=1}^N(x_i a;q)_m=\sum_{\mu\in\Y(N)} (\cdots)P_{\mu\mid N}(x_1,\dots,x_N;q,t)
$$
with some coefficients. More precisely, by the previous lemma, the coefficient of $P_{\mu\mid N}(x_1,\dots,x_N)$ is equal to $\spec(Q_{\mu/\nu}(\ccdot;q,t))a^{|\mu|-|\nu|}$.
It follows that 
$$
\lim_{r\to\infty}r^{|\mu|-|\nu|}c_{N,m}(\nu,\mu;a/r)=\spec(Q_{\mu/\nu}(\ccdot;q,t))a^{|\mu|-|\nu|}.
$$
Therefore, we can write
$$
c_{N,m}(\nu,\mu;a)=a^{|\mu|-|\nu|}\wt c_{N,m}(\nu,\mu;a),
$$
where $\wt c_{N,m}(\nu,\mu;a)$ is a polynomial in $a$ of degree at most $Nm-|\mu|+|\nu|$, with constant term  $\spec(Q_{\mu/\nu}(\ccdot;q,t))$.

\emph{Step} 3. We keep to the assumption that $\mu\in\Y(N)$ and $\mu\supseteq\nu$. Set
$$
d_\mu(a):=\prod_{i=1}^N(x_i a;q)_m\Bigg|_{(x_1,\dots,x_N)=X_N(\mu)}.
$$
That is, 
$$
d_\mu(a):=\prod_{i=1}^N(q^{-\mu_i}t^{i-1}a;q)_m=\prod_{i=1}^N\prod_{k=0}^{m-1}(1-q^{-\mu_i+k}t^{i-1}a).
$$
Observe that the roots of this polynomial are pairwise distinct (recall that we are working over $\FF$). 

Let $d_{\nu\mu}(a)$ denote the product of common factors of the polynomials $d_\nu(a)$ and $d_\mu(a)$.
We claim that
$$
d_{\nu\mu}(a)=\prod_{(i,j)\in m^N+\nu, \; (i,j)\notin \mu}(1-q^{m-j}t^{i-1}a).
$$
Indeed, in $d_\mu(a)$, for fixed index $i$, the exponent of $q$ ranges over the set
$$
\{-\mu_i,\dots,-\mu_i+m-1\}=\{m-j: \mu_i+1\le j\le \mu_i+m\}.
$$
Likewise, in $d_\nu(a)$, the corresponding set is
$$
\{m-j: \nu_i+1\le j\le \nu_i+m\}.
$$
As $\nu_i\le\mu_i$, the intersection of these two sets is
$$
\{m-j: \mu_i+1\le j\le \nu_i+m\},
$$
which proves the claim.

\emph{Step} 4. Let us prove that for each $\la\in\Y(N)$ with $\la\supseteq\nu$, the polynomial $c_{N,m}(\nu,\la;a)$ defined by \eqref{eq5.A} is divisible by $d_{\nu\la}(a)$. We use induction on $\la$. 

The base of induction is $\la=\nu$. Let us substitute $(x_1,\dots,x_N)=X_N(\nu)$ into \eqref{eq5.A}. On the right, $I_{\mu\mid N}(X_N(\nu);q,t)=0$ for all $\mu\ne\nu$, and we obtain
$$
I_{\nu\mid N}(X_N(\nu);q,t)d_{\nu}(a)=c_{N,m}(\nu,\nu;a)I_{\nu\mid N}(X_N(\nu);q,t),
$$
so that 
$$
c_{N,m}(\nu,\nu;a)=d_\nu(a)=d_{\nu\nu}(a).
$$

Next, by virtue of step 3, as $\la$ gets larger, the polynomial $d_{\nu\la}(a)$ gets smaller in the sense that it involves a smaller set of factors. Let us substitute $(x_1,\dots,x_N)=X_N(\la)$ into \eqref{eq5.A}. The left-hand side equals  $d_\la(a)$ multiplied by a constant factor. Hence the right-hand side must vanish at all roots of $d_\la(a)$. A fortiori, it must vanish at all roots of $d_{\nu\la}(a)$. However, at these roots, all the polynomials $d_{\nu\mu}(a)$ with $\mu\subset\la$ also vanish. Therefore, by the induction assumption, all the polynomials $c_{N,m}(\nu,\mu;a)$ with $\mu$ strictly contained in $\la$ vanish at these roots, too. 

On the other hand, we have $I_{\mu\mid N}(X_N(\la);q,t)=0$ unless $\mu\subseteq\la$. Thus, only $\mu=\la$ contributes, so the right-hand side is reduced to $c_{N,m}(\nu,\la;a)I_{\la\mid N}(X_N(\la);q,t)$. We conclude that $c_{N,m}(\nu,\la;a)$ is divisible by $d_{\nu\la}(a)$.

\emph{Step} 5. Let again $\mu\in\Y(N)$, $\mu\supseteq\nu$. Recall that $\deg c_{N,m}(\nu,\mu;a)\le Nm$. On the other hand, 
$$
\deg d_{\nu\mu}(a)\ge Nm-(|\mu|-|\nu|)
$$
and the equality holds only when $\mu\subseteq m^N+\nu$. 

Since $c_{N,m}(\nu,\mu;a)$ is also divisible by $a^{|\mu|-|\nu|}$, we see that only diagrams $\mu$ contained in $m^N+\nu$ may contribute to \eqref{eq5.A}, and for these $\mu$'s, we have
$$
c_{N,m}(\nu,\mu)=\const a^{|\mu|-|\nu|}d_{\nu\mu}(a).
$$
As the constant term of $d_{\nu\mu}(a)$ equals $1$, we finally obtain the desired formula.
\end{proof}

The next lemma is a generalization of the previous one.
Let $a$ and $b$ be formal variables, and  let $\wt\spec:\Sym_\FF\to \FF[a,b]$ be the specialization  $\spec_f$ with
$$
f(x):= \frac{(xa;q)_\infty}{(xb;q)_\infty}.
$$

\begin{lemma}
In this notation, we have{\rm:}

{\rm(i)} The expansion
\begin{equation}\label{eq5.B0}
I_{\nu\mid N}(x_1,\dots,x_N;q,t)\prod_{i=1}^N\frac{(x_i a;q)_\infty}{(x_ib;q)_\infty}=\sum_{\mu\in\Y(N)}c_N(\nu,\mu;a,b) I_{\mu\mid N}(x_1,\dots,x_N;q,t)
\end{equation}
makes sense as an equality in $\Sym_\FF(N)[[a,b]]$.

{\rm(ii)} In this expansion, the coefficients vanish unless $\mu\supseteq\nu$, and if this condition is satisfied, then
\begin{equation}\label{eq5.B1}
c_N(\nu,\mu;a,b)=\wt\spec(Q_{\mu/\nu}(\ccdot;q,t))\prod_{i=1}^N\frac{(q^{-\nu_i}t^{i-1}a;q)_\infty}{(q^{-\mu_i}t^{i-1}b;q)_\infty}.
\end{equation}
\end{lemma}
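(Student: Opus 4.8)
The plan is to reduce everything to Lemma \ref{lemma5.spec} by specializing $b=aq^m$ and letting $m\to\infty$. For part (i), observe first that by the device of Remark \ref{Pochhammer},
$$
\frac{(xa;q)_\infty}{(xb;q)_\infty}=\exp\left(-\sum_{r=1}^\infty\frac{x^r(a^r-b^r)}{r(1-q^r)}\right)=\sum_{n=0}^\infty f_n(a,b)\,x^n,
$$
with $f_0=1$ and each $f_n(a,b)$ a polynomial in $a,b$ homogeneous of degree $n$. Assigning weight $1$ to $a$ and $b$ and weight $0$ to the $x_j$, the weight-$d$ component of $\prod_{i=1}^N\frac{(x_ia;q)_\infty}{(x_ib;q)_\infty}$ is a symmetric polynomial in $x_1,\dots,x_N$ of degree $d$; hence, after multiplying by $I_{\nu\mid N}(\ccdot;q,t)$, its expansion in the interpolation basis involves only the finitely many $I_{\mu\mid N}(\ccdot;q,t)$ with $|\mu|\le|\nu|+d$. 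This gives (i), with $c_N(\nu,\mu;a,b)\in\FF[[a,b]]$. The vanishing of $c_N(\nu,\mu;a,b)$ unless $\mu\supseteq\nu$ follows verbatim from Step 1 of the proof of Lemma \ref{lemma5.spec}: choose $\la$ minimal for inclusion with $c_N(\nu,\la;a,b)\neq0$, evaluate \eqref{eq5.B0} at $x=X_N(\la)$, use the extra vanishing property to kill all terms but $\mu=\la$ on the right, and conclude $I_{\nu\mid N}(X_N(\la);q,t)\neq0$, whence $\la\supseteq\nu$.

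Write $R_{\nu\mu}(a,b)$ for the right-hand side of \eqref{eq5.B1}. It also lies in $\FF[[a,b]]$: the factor $\wt\spec(Q_{\mu/\nu}(\ccdot;q,t))$ belongs to $\FF[a,b]$ (and is in fact homogeneous of degree $|\mu|-|\nu|$, since $\wt\spec(Q_{(n)}(\ccdot;q,t))=f_n$ and $Q_{\mu/\nu}(\ccdot;q,t)$ is isobaric of weight $|\mu|-|\nu|$ in the algebraically independent generators $Q_{(n)}(\ccdot;q,t)$, see \cite[Ch.~VI]{M}), while the two $q$-Pochhammer products lie in $\FF[[a]]$ and $\FF[[b]]$. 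Now for each $m\ge0$ the substitution $b\mapsto aq^m$ is a well-defined ring homomorphism $\FF[[a,b]]\to\FF[[a]]$, and a series $G=\sum_{j,k}g_{jk}a^jb^k\in\FF[[a,b]]$ with $G|_{b=aq^m}=0$ for infinitely many $m$ must vanish: for fixed $n$ the $a^n$-coefficient of $G|_{b=aq^m}$ equals $\sum_{k=0}^n g_{n-k,k}(q^k)^m$, an exponential sum in $m$ with the distinct bases $q^0,\dots,q^n$ (distinct because $q$ is transcendental over $\Q$), which vanishes for infinitely many $m$ only if all $g_{n-k,k}=0$. Therefore it suffices to prove $c_N(\nu,\mu;a,aq^m)=R_{\nu\mu}(a,aq^m)$ for all $m\gg0$; taking $m\ge\max_i(\mu_i-\nu_i)$ guarantees $\mu\subseteq m^N+\nu$.

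At $b=aq^m$ the factor $\frac{(x_ia;q)_\infty}{(x_ib;q)_\infty}$ becomes $(x_ia;q)_m$, so \eqref{eq5.B0} turns into \eqref{eq5.A}, and by uniqueness of the expansion in the interpolation basis $c_N(\nu,\mu;a,aq^m)=c_{N,m}(\nu,\mu;a)$, which is evaluated by \eqref{eq5.A1}. On the other side, $\wt\spec$ restricted to $b=aq^m$ is $\spec_g$ with $g(x)=(xa;q)_m$; from $\spec_g(Q_{(n)}(\ccdot;q,t))=a^n\spec(Q_{(n)}(\ccdot;q,t))$ (with $\spec$ as in Lemma \ref{lemma5.spec}) and isobaricity one gets $\wt\spec(Q_{\mu/\nu}(\ccdot;q,t))|_{b=aq^m}=a^{|\mu|-|\nu|}\spec(Q_{\mu/\nu}(\ccdot;q,t))$, while for $\mu\subseteq m^N+\nu$ a row-by-row reindexing gives
$$
\prod_{i=1}^N\frac{(q^{-\nu_i}t^{i-1}a;q)_\infty}{(q^{-\mu_i}t^{i-1}aq^m;q)_\infty}=\prod_{i=1}^N(q^{-\nu_i}t^{i-1}a;q)_{\nu_i-\mu_i+m}=\prod_{(i,j)\in(m^N+\nu)/\mu}\bigl(1-q^{m-j}t^{i-1}a\bigr).
$$
Comparing this with \eqref{eq5.A1} yields $R_{\nu\mu}(a,aq^m)=c_{N,m}(\nu,\mu;a)=c_N(\nu,\mu;a,aq^m)$ for all large $m$, and hence $c_N(\nu,\mu;a,b)=R_{\nu\mu}(a,b)$, which is \eqref{eq5.B1}.

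I expect the principal difficulty to lie not in any single step but in the bookkeeping that makes the reduction rigorous: checking that all objects genuinely live in $\FF[[a,b]]$, that the substitution $b=aq^m$ is admissible, and that the vanishing criterion above applies, together with the elementary but error-prone reindexing of the $q$-Pochhammer product over the cells of the skew diagram $(m^N+\nu)/\mu$.
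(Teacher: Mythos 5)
Your proof is correct and takes essentially the same approach as the paper: reduce to Lemma~\ref{lemma5.spec} via the specialization $b=aq^m$ and then lift to all of $\FF[[a,b]]$ via a vanishing criterion for formal power series in $(a,b)$. You are somewhat more careful than the paper, which dismisses (i) as obvious and whose justification of the vanishing criterion contains an indexing slip (the displayed condition should read $\sum_{k+\ell=n}F_{k\ell}\,q^{m\ell}=0$ rather than $\sum_{k+m\ell=n}F_{k\ell}=0$); your Vandermonde/exponential-sum argument supplies exactly the rigor that step needs.
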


\begin{proof}
(i) This  is obvious. 

(ii) Observe that if $F(a,b)$ is a formal power series in variables $a$ an $b$,  such that $F(a,a q^m)=0$ for $m=1,2,\dots$, then $F(a,b)=0$.  Indeed, write
$$
F(a,b)=\sum_{k,\ell=0}^\infty F_{k\ell}a^k b^\ell.
$$
The condition on $F$ means that for each $n=0,1,2,\dots$ one has
$$
\sum_{k+m \ell=n}F_{k\ell}=0, \qquad m=1,2,\dots\,.
$$
Taking $m>n$ we see that $F_{n0}=0$ for all $n$. Then divide the series by $b$ and repeat the same argument, and so on. In this way we obtain that all coefficients vanish. 

This argument shows that it suffices to check claim (ii) in the particular case $b=a q^m$. Let us show that then (ii) reduces to the claim of Lemma \ref{lemma5.spec}.

Indeed, if $b=aq^m$, then
$$
\frac{(xa;q)_\infty}{(xb;q)_\infty}=\frac{(xa;q)_\infty}{(xa^m;q)_\infty}=(xa;q)_m.
$$
It follows that the left-hand side of \eqref{eq5.B0} turns into the left-hand side of \eqref{eq5.A}. 

Next, recall that the specialization $\spec$ in Lemma \ref{lemma5.spec} corresponds to $f(x)=(x;q)_m$. Since the specialization $\wt\spec$ with $b=a^m$ corresponds to $f(x)=(xa;q)_m$, it follows that  
$$
\wt\spec(Q_{\mu/\nu}(\ccdot;q,t))=\spec(Q_{\mu/\nu}(\ccdot;q,t)) a^{|\mu|-|\nu|}, 
$$
because $Q_{\mu/\nu}(\ccdot;q,t)$ is homogeneous of degree $|\mu|-|\nu|$. 

Next, observe that in the case $b=aq^m$ the product on the right-hand side of \eqref{eq5.B1} turns into the product on the right-hand side of \eqref{eq5.A1}. Therefore,  \eqref{eq5.B1} turns into \eqref{eq5.A1}.

The final observation is that the quantity $\wt\spec(Q_{\mu/\nu}(\ccdot;q,t))$ with $b=aq^m$ automatically vanishes unless $\mu\subseteq m^N+\nu$. Indeed, from the proof of Lemma \ref{lemma5.A} and the fact that the polynomial $f(x)=(xa;q)_m$ has degree $m$ it is seen that $\wt\spec(Q_{\mu/\nu}(\ccdot;q,t))$ vanishes unless $\mu$ can be obtained from $\nu$ by appending a horizontal strip of length at most $m$ (here we also use the Pieri rule). Therefore, $\mu_i\le \nu_i+m$ for all $i$, so that $\mu\subseteq m^N+\nu$. 

Thus, in the case $b=aq^m$ the desired equality \eqref{eq5.B0} coincides with \eqref{eq5.A}, which completes the proof.  
\end{proof}

Now we are in a position to find the coefficients $c_N(\nu,\mu;y)$ of the expansion \eqref{eq5.c_N}. 

\begin{lemma}
Let $\nu,\mu\in\Y$ and $N\ge \max\{\ell(\nu),\ell(\mu)\}$. 

{\rm(i)} The coefficient $c_N(\nu,\mu;y)$ vanishes unless $\mu\succ\nu$. 

{\rm(ii)} If this condition holds, then
$$
c_N(\nu,\mu;y)=\varphi_{\mu/\nu}(q,t) y^{|\mu|-|\nu|}\prod_{i=1}^N\frac{(yq^{-\nu_i}t^i;q)_\infty}{(yq^{-\mu_i}t^{i-1};q)_\infty},
$$
where the factors $\varphi_{\mu/\nu}(q,t)$ are defined by \eqref{eq5.phi}.
\end{lemma}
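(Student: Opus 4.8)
The plan is to obtain this lemma as a straightforward specialization of the preceding lemma, taking there $a=yt$ and $b=y$. With this substitution the product $\prod_{i=1}^N\frac{(x_i a;q)_\infty}{(x_i b;q)_\infty}$ in \eqref{eq5.B0} becomes $\prod_{i=1}^N\frac{(x_i yt;q)_\infty}{(x_i y;q)_\infty}$, so \eqref{eq5.B0} turns into \eqref{eq5.c_N} and hence $c_N(\nu,\mu;y)=c_N(\nu,\mu;yt,y)$. Feeding $a=yt$, $b=y$ into \eqref{eq5.B1} already produces the product factor claimed in (ii): the quantity $\prod_{i=1}^N\frac{(q^{-\nu_i}t^{i-1}a;q)_\infty}{(q^{-\mu_i}t^{i-1}b;q)_\infty}$ becomes $\prod_{i=1}^N\frac{(yq^{-\nu_i}t^{i};q)_\infty}{(yq^{-\mu_i}t^{i-1};q)_\infty}$. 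So it remains only to evaluate the scalar $\wt\spec(Q_{\mu/\nu}(\ccdot;q,t))$ corresponding to the specialization $\spec_f$ with $f(x)=\frac{(xyt;q)_\infty}{(xy;q)_\infty}$.

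For this I would observe that $f(x)=\frac{(xyt;q)_\infty}{(xy;q)_\infty}=\Pi(x;y;q,t)$, the one-variable Cauchy kernel. By the identity $\Pi(x;Z;q,t)=1+\sum_{n\ge1}Q_{(n)}(Z;q,t)x^n$ used in the proof of Lemma \ref{lemma5.A}, this forces $\spec_f(Q_{(n)}(\ccdot;q,t))=Q_{(n)}(y;q,t)$ for every $n\ge1$. Since the elements $Q_{(n)}(\ccdot;q,t)$ are algebraically independent generators of $\Sym_\FF$, the algebra morphism $\spec_f$ must coincide with the one-variable evaluation homomorphism $G\mapsto G(y;q,t)$; hence $\wt\spec(Q_{\mu/\nu}(\ccdot;q,t))=Q_{\mu/\nu}(y;q,t)$. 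Now \eqref{eq5.phi} tells us this equals $\varphi_{\mu/\nu}(q,t)\,y^{|\mu|-|\nu|}$ when $\mu\succ\nu$ and vanishes otherwise. This gives (i) at once --- and in fact it sharpens the mere containment $\mu\supseteq\nu$ coming from the previous lemma to the horizontal-strip relation $\mu\succ\nu$ --- while combining it with the product factor computed above yields the explicit formula in (ii).

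I do not expect a genuine obstacle here: the substantive content has already been absorbed into the previous two lemmas, and what is left is bookkeeping of $q$-Pochhammer symbols. The only step deserving a word of care is the identification $\wt\spec(Q_{\mu/\nu}(\ccdot;q,t))=Q_{\mu/\nu}(y;q,t)$, i.e.\ the fact that a symmetric-function specialization pinned down by its values on the $Q_{(n)}$'s may legitimately be matched against one-variable evaluation; this rests precisely on $\{Q_{(n)}(\ccdot;q,t)\}_{n\ge1}$ being a free generating set of $\Sym_\FF$.
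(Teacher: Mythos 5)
Your proof is correct and matches the paper's argument: specialize the previous lemma at $a=yt$, $b=y$, recognize the resulting $\wt\spec$ as evaluation at the single variable $y$ (via the generating function $\Pi(x;\ccdot;q,t)=1+\sum_n Q_{(n)}(\ccdot;q,t)x^n$), and apply \eqref{eq5.phi}. You merely make explicit the justification — that $\{Q_{(n)}\}_{n\ge1}$ freely generates $\Sym_\FF$, so a specialization is pinned down by its values on them — which the paper leaves implicit when it says the specialization ``is simply evaluation at the point $(y,0,0,\dots)$.''
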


\begin{proof} 
The coefficients $c_N(\nu,\mu;y)$ are a particular case of the coefficients $c_N(\nu,\mu;a,b)$ corresponding to $a=yt$, $b=y$. The previous lemma gives us an expression for the coefficients $c_N(\nu,\mu;a,b)$. It involves the specialization $\wt\spec$ depending on $a$ and $b$.  In the case $a=yt$, $b=y$, this specialization takes the form defined by
$$
\Pi(x;\ccdot;q,t)=1+\sum_{n=1}^\infty Q_{(n)}(\ccdot;q,t)x^n\mapsto \frac{(xyt;q)_\infty}{(xy;q)_\infty}.
$$ 
This means that it is simply evaluation at the point $(y,0,0,\dots)$. By virtue of \eqref{eq5.phi} this immediately implies both claims. 
\end{proof}

From the formula for $c_N(\nu,\mu;y)$ it is seen that the ratio $c_N(\nu,\mu;y)/(yt^N;q)_\infty$ does not depend on $N$ (as it should be) and is equal to 
$$
\varphi_{\mu/\nu}(q,t) y^{|\mu|-|\nu|}\prod_{i=1}^\infty\frac{(yq^{-\nu_i}t^i;q)_\infty}{(yq^{-\mu_i}t^{i-1};q)_\infty}.
$$
We have obtained the desired expression for $c(\nu,\mu;y)$. This completes the proof of Theorem \ref{thm5.B}.

\section{Difference operators: special case $t=q$}\label{sect6}

The present section serves as a preparation to the next one. We fix a positive integer $N$ and assume $t=q$. 

We use the standard notation $T_{q,x}$ for the $q$-shift operator acting on a variable $x$ (see Macdonald \cite[ch. VI, (3.1)]{M}): for a test function $f(x)$,
$$
(T_{q,x}f)(x)=f(xq).
$$
Let $z$ be an auxiliary variable and let $L_x$ denote the following $q$-difference operator acting on functions in $x$:
$$
L_x:=x^{-1}\big((xq^{1-N}-1)zT_{q,x}+x+z\big).
$$
Note that $L_x$ preserves the space of polynomials. 

Next, we set
$$
\mathcal D_N(z;q)=\frac1{V(x_1,\dots,x_N)}\circ\prod_{j=1}^N L_{x_j}\circ V(x_1,\dots,x_N).
$$
That is, the operator on the right is the composition of three operators: multiplication by the Vandemonde $\prod_{i<j}(x_i-x_j)$, the product of partial $q$-difference operators $L_{x_j}$, and division by the Vandermonde. 

Recall (see Section \ref{sect4}) that 
\begin{multline*}
I_{\mu\mid N}(x_1,\dots,x_N; q,q)=s_{\mu\mid N}(x_1,\dots,x_N\mid q^{N-1}, q^{N-2},\dots)\\
=\frac{\det\left[(x_j-q^{N-1})\dots(x_j-q^{-\mu_i+i})\right]_{i=1}^N}{V(x_1,\dots,x_N)}.
\end{multline*}

\begin{lemma}\label{lemma6.A}
One has
$$
\mathcal D_N(z;q)\big(I_{\mu\mid N}(x_1,\dots,x_N;q,q)\big)=\prod_{i=1}^N(1+q^{\mu_i+1-i}z)\cdot I_{\mu\mid N}(x_1,\dots,x_N;q,q).
$$
\end{lemma}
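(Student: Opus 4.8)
Recall from Section~\ref{sect4} that, with $t=q$,
$$
I_{\mu\mid N}(x_1,\dots,x_N;q,q)=\frac{\det[f_{m_i}(x_j)]_{i,j=1}^N}{V(x_1,\dots,x_N)},\qquad m_i:=\mu_i+N-i,
$$
where $f_m(x):=(x-q^{N-1})(x-q^{N-2})\cdots(x-q^{N-m})$ (so $f_0\equiv1$, and $f_{m+1}(x)=(x-q^{N-1-m})f_m(x)$). Since $\mathcal D_N(z;q)=V^{-1}\circ\prod_{j=1}^N L_{x_j}\circ V$, it suffices to understand the action of $\prod_j L_{x_j}$ on the determinant $D_\mu:=\det[f_{m_i}(x_j)]$. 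The plan is to prove the single-variable statement that each $f_m$ is an eigenfunction of $L_x$:
$$
L_x f_m(x)=(1+zq^{\,m+1-N})\,f_m(x),
$$
and then assemble the determinantal identity.

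\emph{Step 1 (the eigenvalue computation).} I would compute $L_xf_m$ directly. Since $qx-q^{N-k}=q(x-q^{N-k-1})$, one gets $T_{q,x}f_m(x)=f_m(qx)=q^m\,g_m(x)$ with $g_m(x):=(x-q^{N-2})\cdots(x-q^{N-m-1})$, and $f_{m+1}(x)=(x-q^{N-1})g_m(x)$. Using $xq^{1-N}-1=q^{1-N}(x-q^{N-1})$ this gives
$$
(xq^{1-N}-1)\,z\,T_{q,x}f_m(x)=zq^{\,m+1-N}f_{m+1}(x).
$$
On the other hand, from $f_{m+1}(x)=(x-q^{N-1-m})f_m(x)$ we get $xf_m(x)=f_{m+1}(x)+q^{N-1-m}f_m(x)$, hence $(x+z)f_m(x)=f_{m+1}(x)+(q^{N-1-m}+z)f_m(x)$. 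Adding the two contributions and factoring out $f_m(x)$ (using $f_{m+1}=(x-q^{N-1-m})f_m$) leaves the bracket
$$
(1+zq^{\,m+1-N})(x-q^{N-1-m})+q^{N-1-m}+z=(1+zq^{\,m+1-N})\,x,
$$
where the constant terms cancel because $q^{\,m+1-N}q^{\,N-1-m}=1$. Thus the quantity inside $L_x=x^{-1}(\cdots)$ equals $(1+zq^{\,m+1-N})\,x\,f_m(x)$, which proves the claimed eigenvalue relation (and re-confirms that $L_x$ preserves polynomials).

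\emph{Step 2 (from one variable to the determinant).} The operators $L_{x_1},\dots,L_{x_N}$ pairwise commute since they act on distinct variables, and in the expansion $D_\mu=\sum_{\sigma\in S_N}\operatorname{sgn}(\sigma)\prod_{j}f_{m_{\sigma(j)}}(x_j)$ the operator $L_{x_j}$ acts only on the factor $f_{m_{\sigma(j)}}(x_j)$. Applying Step~1 to each factor, $\prod_j L_{x_j}$ replaces $f_{m_{\sigma(j)}}(x_j)$ by $(1+zq^{\,m_{\sigma(j)}+1-N})f_{m_{\sigma(j)}}(x_j)$, whence
$$
\Big(\textstyle\prod_{j=1}^N L_{x_j}\Big)D_\mu=\det\big[(1+zq^{\,m_i+1-N})f_{m_i}(x_j)\big]_{i,j=1}^N=\prod_{i=1}^N(1+zq^{\,m_i+1-N})\cdot D_\mu,
$$
the last equality by pulling the scalar $1+zq^{\,m_i+1-N}$ out of row $i$. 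Since $m_i+1-N=\mu_i+1-i$, dividing by $V(x_1,\dots,x_N)$ yields
$$
\mathcal D_N(z;q)\big(I_{\mu\mid N}(\ccdot;q,q)\big)=\prod_{i=1}^N(1+q^{\,\mu_i+1-i}z)\cdot I_{\mu\mid N}(\ccdot;q,q),
$$
as desired.

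\emph{Main obstacle.} There is no serious difficulty: the only point requiring care is Step~1, the verification that the specific operator $L_x$ (which is reverse-engineered precisely for this purpose) has the polynomials $f_m$ as eigenfunctions; the cancellation of constant terms there is the crux. Everything else is the routine observation that a product of commuting one-variable operators, one per column, acts on a determinant row by row.
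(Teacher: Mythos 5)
Your proof is correct and follows essentially the same route as the paper: reduce to the single-variable eigenvalue identity $L_x f_m = (1+q^{m+1-N}z)f_m$ (the paper writes it as $L_x f_m = (1+q^{m+1}z)f_m$ under a shifted indexing of the $f_m$'s) and then pull the eigenvalue out of each row of the determinant. The only differences are the indexing convention for the $f_m$'s and that you spell out the cancellation in Step 1 more explicitly than the paper does.
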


\begin{proof}
Set 
$$
f_m(x)=\begin{cases} (x-q^{N-1})\dots(x-q^{-m}), &  m\in\Z, \, m\ge 1-N,\\
1, & m=-N. \end{cases}  
$$
Then 
$$
T_{q,x} f_m=\frac{x q^{N+m}-q^{N-1}}{x-q^{N-1}}f_m.
$$
It follows that
$$
L_x f_m=(1+q^{m+1}z)f_m,
$$
which in turn implies the desired result.
\end{proof}

Next, recall (see Corollary \ref{cor4.A} and Lemma \ref{lemma4.B}) that
\begin{multline*}
\Idm_{\mu\mid N}(u_1,\dots,u_N;q,q)
=\si_{\mu\mid N}(u_1,\dots,u_N\mid q^{N-2}, q^{N-3},\dots)\\
=(-1)^{N(N-1)/2}\frac{\det\left[\dfrac{(u_j-q^{N-2})\dots(u_j-1)}{(u_j-q^{N-2})\dots(u_j-q^{-\mu_i+i-1})}\right]_{i=1}^N}{V(u_1,\dots,u_N)}
\end{multline*}

Let us set
$$
\wh{\mathcal D}_N(z;q):=\frac1{V(u_1,\dots,u_N)}\circ\prod_{j=1}^N\wh L_{u_j}\circ V(u_1,\dots,u_N),
$$
where
$$
\wh L_u:=u^{-1}\big((u-1)z T^{-1}_{q,u}+u+z\big).
$$

The next lemma is similar to the previous one.

\begin{lemma}\label{lemma6.B}
One has
\begin{equation*}
\wh{\mathcal D}_N(z;q)\left( \Idm_{\mu\mid N}(u_1,\dots,u_N;q,q)\right)
= \prod_{i=1}^N(1+q^{\mu_i+1-i}z)\cdot  \Idm_{\mu\mid N}(u_1,\dots,u_N;q,q).
\end{equation*}
\end{lemma}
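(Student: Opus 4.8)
The plan is to mirror the proof of Lemma \ref{lemma6.A} as closely as possible, exploiting the determinantal formula for $\Idm_{\mu\mid N}(\ccdot;q,q)$ already recorded just above the statement. First I would introduce the univariate functions
$$
g_m(u):=\frac{(u-q^{N-2})(u-q^{N-3})\dots(u-q^{-m+1})}{(u-q^{N-2})(u-q^{N-3})\dots(u-1)},\qquad m\ge -1,
$$
normalized so that $g_{-1}(u)=1$ (the product in the numerator is empty). With this indexing, the $(i,j)$ entry of the numerator matrix in the determinantal formula for $\Idm_{\mu\mid N}$ is precisely $g_{\mu_i+N-i-1}(u_j)$ up to the common constant $(-1)^{N(N-1)/2}$ and the overall Vandermonde; more cleanly, after conjugation by $V(u_1,\dots,u_N)$ the operator $\wh{\mathcal D}_N(z;q)$ acts coordinatewise, so it suffices to compute $\wh L_u g_m$ for each $m$.

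Next I would carry out that one-variable eigenvalue computation. The operator $T^{-1}_{q,u}$ replaces $u$ by $u/q$, so I compute
$$
T^{-1}_{q,u} g_m = \frac{g_m(u/q)}{g_m(u)}\,g_m(u),
$$
and the ratio $g_m(u/q)/g_m(u)$ telescopes: in the numerator the factors $(u/q-q^{N-2}),\dots,(u/q-q^{-m+1})$ are, up to the factor $q^{-1}$ each, the factors $(u-q^{N-1}),\dots,(u-q^{-m+2})$, and similarly for the denominator, so almost everything cancels and one is left with a single linear factor in the numerator and a single linear factor in the denominator, times a power of $q$. Plugging this into $\wh L_u g_m = u^{-1}\bigl((u-1)z\,T^{-1}_{q,u}g_m + u\,g_m + z\,g_m\bigr)$, the factor $(u-1)$ should cancel the surviving denominator factor, the surviving numerator factor should combine with $u+z$, and after simplification $\wh L_u g_m = (1+q^{m+1-N+1}z)g_m$; adjusting the index shift so that the entry with parameter $m=\mu_i+N-i-1$ produces the eigenvalue $1+q^{\mu_i+1-i}z$, exactly as needed.

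Once the coordinatewise action is known, the rest is formal: applying $\prod_{j=1}^N\wh L_{u_j}$ to the determinant $\det[g_{\mu_i+N-i-1}(u_j)]$ multiplies the $i$th row by the scalar $1+q^{\mu_i+1-i}z$ (since $\wh L_{u_j}$ acts only on the $j$th column variable, it hits each entry $g_{\mu_i+N-i-1}(u_j)$ with the eigenvalue depending only on $i$), hence multiplies the whole determinant by $\prod_{i=1}^N(1+q^{\mu_i+1-i}z)$. Conjugating back by $V(u_1,\dots,u_N)^{-1}$ does not affect this scalar, so the claim follows. The main obstacle is purely bookkeeping: getting the telescoping of $g_m(u/q)/g_m(u)$ right with the correct powers of $q$ and the correct boundary factors, and matching the index shift $m\leftrightarrow \mu_i+N-i-1$ so that the eigenvalue comes out as $1+q^{\mu_i+1-i}z$ rather than some shifted version; there is no conceptual difficulty, only the risk of an off-by-one error in the exponents. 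One should also note explicitly that, unlike in Lemma \ref{lemma6.A}, the $g_m$ are rational rather than polynomial, so one checks that $\wh L_u$ preserves the relevant space of rational functions (denominators dividing $(u-q^{N-2})\dots(u-1)$), which is immediate from the computation above.
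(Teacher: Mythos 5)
Your overall strategy is exactly the paper's: define univariate functions that are eigenfunctions of $\wh L_u$ and that match the entries of the determinant defining $\Idm_{\mu\mid N}(\ccdot;q,q)$, then observe that $\prod_j \wh L_{u_j}$ scales each row of the determinant by the corresponding eigenvalue, and that conjugation by $V(u_1,\dots,u_N)$ is harmless. The paper sets
$$
\wh f_m(u)=\dfrac{(u-q^{N-2})\cdots(u-1)}{(u-q^{N-2})\cdots(u-q^{-m-1})},\qquad m\ge 1-N,\quad \wh f_{-N}=1,
$$
so that the $(i,j)$ entry (after pulling out the Vandermonde) is $\wh f_{\mu_i-i}(u_j)$, computes $T^{-1}_{q,u}\wh f_m=\frac{uq^{m+1}-1}{u-1}\wh f_m$, and deduces $\wh L_u\wh f_m=(1+q^{m+1}z)\wh f_m$, which at $m=\mu_i-i$ gives exactly the eigenvalue $1+q^{\mu_i+1-i}z$ for row $i$.

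The concrete formula you wrote for $g_m$, however, is inverted relative to the actual determinant entries: in the display recorded just before the lemma, the fixed product $(u-q^{N-2})\cdots(u-1)$ sits in the numerator and the $\mu$-dependent product sits in the denominator, whereas your $g_m$ has them swapped. This is not merely an off-by-one in the exponents. With your $g_m$ the telescoping of $g_m(u/q)/g_m(u)$ leaves $(u-1)$ in the numerator and $(u-q^{-m+1})$ in the denominator, so the $(u-1)$ already present in $\wh L_u$ is squared rather than cancelled, and $g_m$ is not an eigenfunction of $\wh L_u$ (a direct check of coefficients shows the eigenvalue equation forces $(q^{1-m}-1)^2=0$, i.e., $m=1$). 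Moreover your normalization $g_{-1}=1$ fails for general $N$, another symptom of the flipped fraction, and for $m\ge1$ your $g_m$ simplifies to the polynomial $(u-q^{-1})\cdots(u-q^{-m+1})$, which is not what appears in the determinant. If you replace $g_m$ by $\wh f_m$ above and use the index $m=\mu_i-i$, the telescoping does produce $(u-1)$ in the denominator, the cancellation with the $(u-1)$ in $\wh L_u$ goes through, and the remaining steps of your argument (row-wise eigenvalue action on the determinant, conjugation by the Vandermonde, preservation of the relevant space of rational functions by $\wh L_u$) are all sound and coincide with the paper's proof.
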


\begin{proof}
Set 
$$
\wh f_m(u)=\begin{cases} \dfrac{(u-q^{N-2})\dots(u-1)}{(u-q^{N-2})\dots(u-q^{-m-1})}, &  m\in\Z, \, m\ge 1-N,\\
1, & m=-N. \end{cases}  
$$
Then 
$$
T^{-1}_{q,u} \wh f_m=\frac{u q^{m+1}-1}{u-1}\wh f_m.
$$
It follows that 
$$
\wh L_u \wh f_m=(1+q^{m+1}z)\wh f_m,
$$
which in turn implies the desired result.
\end{proof}

\begin{corollary}\label{cor6.A}
One has
$$
\mathcal D_N(z,q)\left(\prod_{j=1}^N\frac{(u_j-1)\dots(u_j-q^{N-2})}{(u_j-x_1)\dots(u_j-x_N)}\right) 
=\wh{\mathcal D}_N(z,q)\left(\prod_{j=1}^N\frac{(u_j-1)\dots(u_j-q^{N-2})}{(u_j-x_1)\dots(u_j-x_N)}\right) 
$$
\end{corollary}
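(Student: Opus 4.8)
The plan is to read this off from the Cauchy identity \eqref{eq.G} of Corollary \ref{cor4.A}, together with the two eigenfunction relations established just above in Lemmas \ref{lemma6.A} and \ref{lemma6.B}. Rewriting \eqref{eq.G} via \eqref{eq.F}, we have
$$
\sum_{\mu\in\Y(N)}I_{\mu\mid N}(x_1,\dots,x_N;q,q)\,\Idm_{\mu\mid N}(u_1,\dots,u_N;q,q)=\prod_{j=1}^N\frac{(u_j-q^{N-1})(u_j-q^{N-2})\cdots(u_j-1)}{(u_j-x_1)\cdots(u_j-x_N)},
$$
and the product on the right is precisely the kernel occurring in the statement. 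Here both sides are understood as formal power series in the indeterminates $x_1,\dots,x_N$ and $u_1^{-1},\dots,u_N^{-1}$ over $\FF$; in this ring every homogeneous component is a finite sum, the operators $\mathcal D_N(z,q)$ (acting in the variables $x_j$) and $\wh{\mathcal D}_N(z,q)$ (acting in the variables $u_j$) act coefficientwise, and the infinite sum above may therefore be differentiated term by term by either of them.

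Granting this, the proof is essentially one line. Applying $\mathcal D_N(z,q)$ to the left-hand side leaves every factor $\Idm_{\mu\mid N}(u;q,q)$ untouched and, by Lemma \ref{lemma6.A}, multiplies $I_{\mu\mid N}(x;q,q)$ by $\prod_{i=1}^N(1+q^{\mu_i+1-i}z)$, so that
$$
\mathcal D_N(z,q)\left(\prod_{j=1}^N\frac{(u_j-q^{N-1})\cdots(u_j-1)}{(u_j-x_1)\cdots(u_j-x_N)}\right)=\sum_{\mu\in\Y(N)}\ \prod_{i=1}^N(1+q^{\mu_i+1-i}z)\ I_{\mu\mid N}(x;q,q)\,\Idm_{\mu\mid N}(u;q,q).
$$
Applying $\wh{\mathcal D}_N(z,q)$ instead leaves every factor $I_{\mu\mid N}(x;q,q)$ untouched and, by Lemma \ref{lemma6.B}, multiplies $\Idm_{\mu\mid N}(u;q,q)$ by the same scalar $\prod_{i=1}^N(1+q^{\mu_i+1-i}z)$; the outcome is literally the same series. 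Comparing the two computations gives the asserted equality.

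There is essentially no obstacle: Lemmas \ref{lemma6.A} and \ref{lemma6.B} were set up with identical eigenvalues precisely so that this should come out as a corollary, and the one step deserving a line of care is the termwise action of the $q$-difference operators on the Cauchy sum, which is exactly what the formal-series remark above provides. If one prefers to avoid the Cauchy identity altogether, an equivalent route is to write the kernel through the Cauchy determinant formula as $(-1)^{N(N-1)/2}\bigl(V(x)V(u)\bigr)^{-1}\det\bigl[g(u_j)/(u_j-x_i)\bigr]_{i,j=1}^N$ with $g(u):=(u-1)(u-q)\cdots(u-q^{N-1})$, expand $\mathcal D_N(z,q)$ and $\wh{\mathcal D}_N(z,q)$ through their defining conjugations by the Vandermonde, and use multilinearity of the determinant in its rows, respectively its columns; the matter then reduces to the single rational-function identity $\wh L_u\bigl(g(u)/(u-x)\bigr)=g(u)\,L_x\bigl(1/(u-x)\bigr)$, which one checks by clearing denominators.
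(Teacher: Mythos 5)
Your main argument — expand the kernel via the Cauchy identity \eqref{eq.G} (equivalently, Corollary \ref{cor4.A}) and apply $\mathcal D_N(z;q)$ and $\wh{\mathcal D}_N(z;q)$ termwise using the matching eigenvalues from Lemmas \ref{lemma6.A} and \ref{lemma6.B} — is precisely the paper's proof, and it is correct; you have in fact written the kernel with the correct numerator $(u_j-q^{N-1})\cdots(u_j-1)$ ($N$ factors), which is what the corollary should read. The alternative route you sketch at the end (expand the kernel via the Cauchy determinant, pass the Vandermonde conjugations through by multilinearity, and reduce to the single rational identity $L_x\bigl(g(u)/(u-x)\bigr)=\wh L_u\bigl(g(u)/(u-x)\bigr)$) is a genuinely different, more elementary proof that avoids the Cauchy identity and the interpolation basis entirely; it does check out, but the paper does not take this road.
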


\begin{proof}
The Cauchy identity \eqref{eq.B} can be written in our case as
$$
\sum_{\mu\in\Y(N)}I_{\mu\mid N}(x_1,\dots,x_N;q,q)\Idm_{\mu\mid N}(u_1,\dots,u_N;q,q)=\prod_{j=1}^N\frac{(u_j-1)\dots(u_j-q^{N-2})}{(u_j-x_1)\dots(u_j-x_N)}.
$$
Combining this with Lemma \ref{lemma6.A} and Lemma \ref{lemma6.B} gives the desired equality.
\end{proof}

\begin{lemma}\label{lemma6.C}
The operators $\mathcal D_N(z;q)$ and $\wh{\mathcal D}_N(z,q)$ can be written in the following form
\begin{gather}
\mathcal D_N(z,q)=\frac1{V(x_1,\dots,x_N)} \det\bigg[x_j^{-1}\big\{(x_jq^{1-N}-1)x_j^{N-i}q^{N-i}zT_{q,x_j}+(x_j+z)x_j^{N-i}\big\}\bigg]_{i,j=1}^N   \label{eq6.A}\\
\wh{\mathcal D}_N(z,q)=\frac1{V(u_1,\dots,u_N)}\det\bigg[u_j^{-1}\big\{(u_j-1)u_j^{N-i}q^{i-N}zT^{-1}_{q,u_j}+(u_j+z)u_j^{N-i}\big\}\bigg]_{i,j=1}^N. \label{eq6.B}
\end{gather}
\end{lemma}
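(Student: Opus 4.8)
The plan is to deduce both identities from one elementary remark: each entry of the operator‑valued determinant on the right is obtained from the one‑variable operator $L_{x_j}$ (respectively $\wh L_{u_j}$) by precomposing with multiplication by a monomial, after which the Vandermonde determinant is reconstituted from those monomials by expanding the determinant and conjugating.

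First I would record the key one‑variable identity. Write $m_f$ for the operator of multiplication by a function $f$. Since $T_{q,x}$ multiplies the monomial $x^{N-i}$ by $q^{N-i}$, a direct computation gives
$$
L_{x_j}\circ m_{x_j^{N-i}}=x_j^{-1}\big\{(x_jq^{1-N}-1)x_j^{N-i}q^{N-i}zT_{q,x_j}+(x_j+z)x_j^{N-i}\big\},
$$
that is, the right‑hand side is precisely the $(i,j)$ entry of the determinant in \eqref{eq6.A}. Likewise, because $T^{-1}_{q,u}$ multiplies $u^{N-i}$ by $q^{i-N}$, the operator $\wh L_{u_j}\circ m_{u_j^{N-i}}$ is precisely the $(i,j)$ entry of the determinant in \eqref{eq6.B}.

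Next I would expand the operator determinant. The $j$‑th column of $[L_{x_j}\circ m_{x_j^{N-i}}]_{i,j=1}^N$ involves only the variable $x_j$ and the shift $T_{q,x_j}$, so operators from different columns commute and the determinant is unambiguously $\sum_{\si}\sgn(\si)\prod_{j=1}^N\big(L_{x_j}\circ m_{x_j^{N-\si(j)}}\big)$, the sum running over permutations $\si$ of $\{1,\dots,N\}$. In each summand I would move every factor $L_{x_j}$ to the left past the multiplication operators $m_{x_k^{N-\si(k)}}$ with $k\ne j$, which is legitimate because the operators $L_{x_1},\dots,L_{x_N}$ act on pairwise distinct variables; this rewrites the summand as $\big(\prod_{j=1}^N L_{x_j}\big)\circ m_{\prod_j x_j^{N-\si(j)}}$. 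Summing over $\si$ and using $V(x_1,\dots,x_N)=\det[x_j^{N-i}]_{i,j=1}^N=\sum_{\si}\sgn(\si)\prod_j x_j^{N-\si(j)}$ (valid with the sign convention $V:=\prod_{i<j}(x_i-x_j)$) yields
$$
\det\big[L_{x_j}\circ m_{x_j^{N-i}}\big]_{i,j=1}^N=\Big(\prod_{j=1}^N L_{x_j}\Big)\circ m_{V(x_1,\dots,x_N)}.
$$
Composing on the left with $m_{1/V}$ turns the right‑hand side into $\mathcal D_N(z,q)$ by the very definition of that operator, which is \eqref{eq6.A}. Formula \eqref{eq6.B} follows by the same argument with $L$, $x$, $T_{q,x}$ replaced by $\wh L$, $u$, $T^{-1}_{q,u}$ and with the definition of $\wh{\mathcal D}_N(z,q)$ in place of that of $\mathcal D_N(z,q)$.

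I do not expect any genuine obstacle here. The only two points deserving a line of justification are the determinantal expression of the Vandermonde with the correct sign, and the commutation bookkeeping that lets one pull every $L_{x_j}$ to the left of the monomial multiplications; everything else reduces to the single‑variable identity of the first step.
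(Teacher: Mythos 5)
Your proposal is correct and matches the paper's proof in substance: the paper likewise writes $V(x_1,\dots,x_N)=\det[x_j^{N-i}]$, passes to $\det[L_{x_j}\circ x_j^{N-i}]$, and invokes the commutation relation $T_{q,x}\circ x^m=x^m q^m T_{q,x}$ (respectively $T_{q,u}^{-1}\circ u^m=u^m q^{-m}T_{q,u}^{-1}$) to identify the matrix entries. You simply spell out, in more detail than the paper does, why the operator determinant factors as $\bigl(\prod_j L_{x_j}\bigr)\circ m_V$; that bookkeeping (operators in distinct variables commute) is exactly the unstated step in the original.
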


\begin{proof}
Writing $V(x_1,\dots,x_N)=\det[x_j^{N-i}]$ we obtain
$$
\mathcal D_N(z,q)=\frac1{V(x_1,\dots,x_N)}\det\big[L_{x_j}\circ x_j^{N-i}\big]_{i,j=1}^N,
$$
and then we use the relation $T_{q,x}\circ x^m=x^mq^m T_{q,x}$. This gives the first formula.
The second formula is checked similarly, using the relation $T^{-1}_{q,u}\circ u^m=u^mq^{-m} T^{-1}_{q,u}$.
\end{proof}

\section{Difference operators: general case}\label{sect7}

\subsection{$q$-Difference equations for polynomials $I_{\mu\mid N}(\ccdot;q,t)$}
The following operator is obtained from the operator $\mathcal D_N(z,q)$ (see \eqref{eq6.A}) be replacing $q$ with $t$ in the coefficients; the $q$-shifts $T_{q,x_j}$ remain intact:
\begin{multline}\label{eq7.A}
D_N(z;q,t)=\frac1{V(x_1,\dots,x_N)} \det\bigg[x_j^{-1}\big\{(x_jt^{1-N}-1)x_j^{N-i}t^{N-i}zT_{q,x_j}+(x_j+z)x_j^{N-i}\big\}\bigg]_{i,j=1}^N
\end{multline}

\begin{theorem}\label{thm7.A}
Let $\mu\in\Y(N)$. One has
\begin{equation}\label{eq7.A1}
D_N(z;q,t) \big(I_{\mu\mid N}(x_1,\dots,x_N;q,t)\big)=\prod_{j=1}^N(1+q^{\mu_i}t^{1-i}z)\cdot I_{\mu\mid N}(x_1,\dots,x_N;q,t).
\end{equation}
\end{theorem}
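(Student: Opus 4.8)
The plan is to establish \eqref{eq7.A1} in two steps: first reduce it, modulo terms of strictly smaller degree, to the classical eigenvalue property of Macdonald's $q$-difference operator, and then eliminate the lower-degree remainder by evaluating at the interpolation nodes $X_N(\nu)$. \textbf{Step 1.} I would begin by recording the structure of $D_N(z;q,t)$ from \eqref{eq7.A}: it is polynomial in $z$ of degree $\le N$, its $z^0$-component is the identity operator, and, since the numerator determinant is antisymmetric in $x_1,\dots,x_N$ (hence divisible by $V(x_1,\dots,x_N)$) and the apparent poles at $x_j=0$ cancel, it maps $\Sym_\FF(N)$ into itself, exactly as in Macdonald's treatment of his own operators. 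A degree count on the matrix entries shows that $D_N(z;q,t)$ does not raise the degree in $x$, and that its degree-preserving part, obtained by replacing each entry of \eqref{eq7.A} by its leading term $x_j^{N-i}\bigl(1+t^{1-i}z\,T_{q,x_j}\bigr)$, coincides --- up to the rescaling $z\mapsto t^{1-N}z$ of the spectral parameter --- with Macdonald's operator \cite[Ch.~VI, \S3]{M}; in particular it is triangular in the monomial basis with diagonal entry $\prod_i(1+q^{\lambda_i}t^{1-i}z)$ on $m_\lambda$, so $P_{\mu\mid N}(\ccdot;q,t)$ is its eigenfunction with eigenvalue $\prod_i(1+q^{\mu_i}t^{1-i}z)$. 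Since $P_{\mu\mid N}(\ccdot;q,t)$ is the leading homogeneous component of $I_{\mu\mid N}(\ccdot;q,t)$ (Proposition~\ref{prop2.D}), this gives $D_N(z;q,t)\,I_{\mu\mid N}(\ccdot;q,t)=\prod_{i=1}^N(1+q^{\mu_i}t^{1-i}z)\,I_{\mu\mid N}(\ccdot;q,t)+R_\mu$, where $R_\mu\in\Sym_\FF(N)[z]$ has degree in $x$ strictly less than $|\mu|$.

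\textbf{Step 2.} It remains to show $R_\mu=0$. A symmetric polynomial of degree $<|\mu|$ vanishing at all nodes $X_N(\nu)$ with $|\nu|<|\mu|$ must be zero (nondegeneracy of the grid underlying Proposition~\ref{prop2.C}), so it suffices to prove $R_\mu(X_N(\nu))=0$ for $|\nu|<|\mu|$. For such $\nu$ we have $I_{\mu\mid N}(X_N(\nu);q,t)=0$ by extra vanishing (Proposition~\ref{prop2.D}(ii)), so the task reduces to showing $\bigl(D_N(z;q,t)I_{\mu\mid N}(\ccdot;q,t)\bigr)(X_N(\nu))=0$. Here I would use the mechanism behind all eigenvalue identities for interpolation polynomials: the $q$-shift $T_{q,x_j}$ sends the node $X_N(\nu)$ to $X_N(\nu-e_j)$ whenever $\nu-e_j$ is again a partition. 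Consequently, when the matrix in \eqref{eq7.A} applied to $I_{\mu\mid N}(\ccdot;q,t)$ is evaluated at $x=X_N(\nu)$, its entries become $\FF$-linear combinations of numbers $I_{\mu\mid N}(X_N(\nu');q,t)$ with $|\nu'|\le|\nu|<|\mu|$, all of which vanish; hence the determinant, and with it $D_N(z;q,t)I_{\mu\mid N}(\ccdot;q,t)$, vanishes at $X_N(\nu)$. This is precisely the computation done explicitly for $t=q$ in the proof of Lemma~\ref{lemma6.A}. An alternative --- the ``trick from Macdonald's book'' --- bypasses the general computation: both sides of \eqref{eq7.A1} are rational functions of $t$, so it is enough to check the identity at $t=q^k$, $k=1,2,\dots$, and for these values Macdonald's substitution of the $N$ variables $x_1,\dots,x_N$ by the $Nk$ variables $\{x_iq^{a}:1\le i\le N,\ 0\le a\le k-1\}$ carries the $(q,q^k)$-objects in $N$ variables (interpolation polynomials, nodes, and $D_N(\ccdot;q,q^k)$) to the $(q,q)$-objects in $Nk$ variables, reducing \eqref{eq7.A1} to Lemma~\ref{lemma6.A}.

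\textbf{Expected main obstacle.} In the direct route the delicate case is when $\nu-e_j$ is not a partition, i.e.\ $\nu_j=\nu_{j+1}$: then $T_{q,x_j}$ lands at a point which is not an interpolation node, $I_{\mu\mid N}$ need not vanish there, and one must show that the corresponding contributions to the determinant cancel (two rows or columns becoming proportional). Organizing this cancellation cleanly is exactly what makes the $t=q^k$ reduction attractive; the price of that route is instead a careful verification that Macdonald's substitution intertwines the difference operators and maps the nodes of the $(q,q^k)$-grid onto those of the $(q,q)$-grid with the correct normalizing factors. A secondary, routine point is the determinantal identity behind the comparison in Step 1 of the leading part of $D_N(z;q,t)$ with Macdonald's operator.
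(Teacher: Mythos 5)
Your overall strategy is sound and is, in spirit, the same as the paper's: reduce the statement to the vanishing of a certain symmetric polynomial at the interpolation nodes, then invoke the nondegeneracy of the grid. However, the crucial step --- showing that $D_N(z;q,t)\,I_{\mu\mid N}(\ccdot;q,t)$ vanishes at $X_N(\nu)$ for $|\nu|<|\mu|$ --- is left with a genuine gap, which you yourself flag as the ``expected main obstacle,'' but then do not close.

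The gap is this. When some $a\in A$ has $\nu_a=\nu_{a+1}$ (or $\nu_N=0$ with $N\in A$), the shift $T_{q,A,X}$ applied at $X_N(\nu)$ does \emph{not} land at a node, so the claim that ``its entries become $\FF$-linear combinations of numbers $I_{\mu\mid N}(X_N(\nu');q,t)$ with $|\nu'|<|\mu|$'' is false as stated. You suggest resolving this by showing that the relevant determinant has two proportional rows, or by the $t=q^k$ substitution, but neither of these is developed and neither is what actually makes the argument go through in the paper. The paper's mechanism is Lemma~\ref{lemma7.A}: it rewrites $D_N(z;q,t)=\sum_A C_A z^{|A|}T_{q,A,X}$ with explicit coefficients
$$
C_A(X;t)=t^{|A|(|A|-1)/2}\prod_{a\in A}\frac{x_at^{1-N}-1}{x_a}\prod_{b\in A^c}\frac{x_b+z}{x_b}\prod_{a\in A,\,b\in A^c}\frac{x_at-x_b}{x_a-x_b},
$$
and then observes that \emph{each individual coefficient} $C_A(X_N(\nu))$ vanishes in the problematic cases: if $a\in A$, $a+1\in A^c$, $\nu_a=\nu_{a+1}$, the factor $x_at-x_{a+1}$ is zero at $X_N(\nu)$; if $N\in A$ and $\nu_N=0$, the factor $x_Nt^{1-N}-1$ is zero. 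No cancellation \emph{between} terms of the expansion is required; each problematic summand dies on its own. This is exactly the lemma that turns your heuristic into a proof, and without it (or something of comparable precision) the argument does not close.

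Two smaller remarks. First, your Step~1 --- identifying the leading part of $D_N$ with Macdonald's operator and thereby peeling off the eigenvalue --- is a valid alternative to the paper's route, but it is unnecessary: the paper gets the eigenvalue directly from the $A=\varnothing$ term $C_\varnothing(X_N(\mu))=\prod_j(1+q^{\mu_j}t^{1-j}z)$, which is simpler and avoids a separate triangularity check against Macdonald's operator. Second, the analogy you draw with Lemma~\ref{lemma6.A} is misleading: the $t=q$ case is proved there by directly diagonalizing the univariate operators $L_x$ on the functions $f_m$, not by node evaluation, so it does not provide the template you attribute to it; and the ``trick from Macdonald's book'' appears in the paper only in the \emph{dual} theorem (for $\wh D_N$), where it is used to pass from $t=q$ to general $t$ via the Cauchy kernel --- not the $t=q^k$, $N\mapsto Nk$ substitution you sketch, which would need its own careful verification.
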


This result is a reformulation of the theorem proved in Okounkov \cite[\S 3]{Ok-MRL}. That theorem is a refinement of results from Knop \cite{Knop} and Sahi \cite{Sahi2}, which in turn are analogues of \cite[Theorem 4.4]{KnopSahi}. 

We need a lemma.

\begin{lemma}\label{lemma7.A}
Let $A$ range over the set of all subsets of $\{1,\dots,N\}$ and $A^c:=\{1,\dots,N\}\setminus A$. One has
$$
D_N(z;q,t)=\sum_A C_A z^{|A|}T_{q,A,X}
$$
where $X$ denotes the $N$-tuple of variables $x_1,\dots,x_N$,
$$
T_{q,A,X}:=\prod_{a\in A}T_{q,x_a},
$$
and
$$
C_A=C_A(X;t):=t^{|A|(|A|-1)/2}\cdot \prod_{a\in A}\frac{x_at^{1-N}-1}{x_a}\cdot\prod_{b\in A^c}\frac{x_b+z}{x_b}\cdot\prod_{a\in A}\prod_{b\in A^c}\frac{x_at-x_b}{x_a-x_b}.
$$
\end{lemma}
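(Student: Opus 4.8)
The plan is to expand the determinant defining $D_N(z;q,t)$ by the Leibniz formula and sort the resulting terms according to which columns contribute the part carrying the $q$-shift. First I would rewrite the $(i,j)$-entry of the matrix in \eqref{eq7.A}, after cancelling one power of $x_j$ against the prefactor $x_j^{-1}$, as the sum of a ``shift piece'' $(x_jt^{1-N}-1)x_j^{N-i-1}t^{N-i}z\,T_{q,x_j}$ and a ``plain piece'' $(x_j+z)x_j^{N-i-1}$. In $\det=\sum_\sigma\sgn(\sigma)\prod_j(\text{entry})_{\sigma(j),j}$, for each $\sigma$ I split the product over $j$ according to the subset $A\subseteq\{1,\dots,N\}$ of columns that use the shift piece. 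Since the operator in column $j$ involves only the variable $x_j$, the column operators for distinct $j$ commute; hence the contribution attached to a pair $(A,\sigma)$ is a single multiplication operator composed with $\prod_{a\in A}T_{q,x_a}=T_{q,A,X}$, and the multiplication factor is
$$
z^{|A|}\prod_{a\in A}(x_at^{1-N}-1)\,x_a^{N-\sigma(a)-1}t^{N-\sigma(a)}\cdot\prod_{b\in A^c}(x_b+z)\,x_b^{N-\sigma(b)-1}.
$$

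Next I would sum over $\sigma$ for fixed $A$. Pulling out the $\sigma$-independent prefactor $z^{|A|}\prod_{a\in A}(x_at^{1-N}-1)\prod_{b\in A^c}(x_b+z)$, what remains is $\det\widetilde M$, where $\widetilde M_{ij}=t^{N-i}x_j^{N-i-1}$ for $j\in A$ and $\widetilde M_{ij}=x_j^{N-i-1}$ for $j\in A^c$. Factoring $x_j^{-1}$ out of the $j$-th column turns this into $\prod_{j=1}^N x_j^{-1}$ times the Vandermonde-type determinant $\det[y_j^{N-i}]_{i,j=1}^N=V(y_1,\dots,y_N)$, with $y_a=tx_a$ for $a\in A$ and $y_b=x_b$ for $b\in A^c$ (here I use $V(z_1,\dots,z_N)=\det[z_j^{N-i}]=\prod_{i<j}(z_i-z_j)$, as in Definition \ref{def2.A}).

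The final step is the elementary evaluation of $V(y_1,\dots,y_N)/V(x_1,\dots,x_N)=\prod_{i<j}(y_i-y_j)/(x_i-x_j)$ by cases: a pair $i<j$ with both indices in $A$ contributes $t$, and there are $|A|(|A|-1)/2$ of them, giving $t^{|A|(|A|-1)/2}$; a pair with both indices in $A^c$ contributes $1$; a mixed pair contributes $\dfrac{tx_a-x_b}{x_a-x_b}$ with $a\in A$, $b\in A^c$, where one checks the sign is the same whether $a<b$ or $b<a$, so the mixed pairs together give $\prod_{a\in A}\prod_{b\in A^c}\dfrac{x_at-x_b}{x_a-x_b}$. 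Substituting this back, dividing by $V(x_1,\dots,x_N)$ as in the definition of $D_N(z;q,t)$, and redistributing $\prod_{j}x_j^{-1}=\prod_{a\in A}x_a^{-1}\cdot\prod_{b\in A^c}x_b^{-1}$ among the two products yields exactly $\sum_A C_A z^{|A|}T_{q,A,X}$ with $C_A$ as stated. The only point requiring a little care — and hence the main, though mild, obstacle — is keeping the noncommuting operator factors in the correct order throughout the Leibniz expansion; this is disposed of once and for all by the observation that the column-$j$ operator depends on $x_j$ alone, so operators from different columns commute and all $q$-shifts can be collected to the right.
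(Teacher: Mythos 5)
Your argument is correct and is essentially the same as the paper's: expand the determinant in \eqref{eq7.A} by the Leibniz formula, sort terms according to the subset $A$ of columns contributing the $q$-shift piece, recognize the inner $\sigma$-sum (for fixed $A$) as the Vandermonde $V(\wt x_1,\dots,\wt x_N)$ with $\wt x_a = t x_a$ for $a\in A$, $\wt x_b = x_b$ for $b\in A^c$, and finally compute the ratio $V(\wt x)/V(x)$ pair by pair. You spell out a couple of bookkeeping steps the paper leaves implicit (commutativity of the distinct column operators, factoring $x_j^{-1}$ out of each column), but the decomposition and the key Vandermonde identification are the same.
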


\begin{proof}
Let $S_N$ denote the group of permutations of $\{1,\dots,N\}$. The determinant on the right-hand side of \eqref{eq7.A} equals
\begin{gather*}
\sum_{\si\in S_N}\sgn(\si)\prod_{j=1}^N \bigg[x_j^{-1}\big\{(x_jt^{1-N}-1)x_j^{N-\si(j)}t^{N-\si(j)}zT_{q,x_j}+(x_j+z)x_j^{N-\si(j)}\big\}\bigg
] \\
=\sum_{A}\left(\prod_{a\in A}\frac{x_at^{1-N}-1}{x_a}\prod_{b\in A^c}\frac{x_b+z}{x_j}\right)\left(\sum_{\si\in S_N}\sgn(\si)\prod_{a\in A}(x_a t)^{N-\si(a)}\prod_{b\in A^c}
x_b^{N-\si(b)}\right) z^{|A|} T_{q,A,X}.
\end{gather*}

Given $A$, the sum over $\si\in S_N$ taken in the parentheses can be written as $V(\wt x_1,\dots,\wt x_N)$, where
$$
\wt x_j:= \begin{cases} x_jt, & j\in A, \\ x_j, & j\in A^c. \end{cases}
$$
Next, observe that
$$
\frac{V(\wt x_1,\dots,\wt x_N)}{V(x_1,\dots,x_N)}=t^{|A|(|A|-1)/2}\prod_{a\in A}\prod_{b\in A^c}\frac{x_at-x_b}{x_a-x_b}.
$$
This completes the proof.
\end{proof}

\begin{proof}[Proof of Theorem \ref{thm7.A}]
We will show that the function $D_N(z,q,t) \big(I_{\mu\mid N}(x_1,\dots,x_N;q,t)\big)$ possesses the following properties:

(1) it is a symmetric polynomial of degree $\le |\mu|$;

(2) it vanishes at $X_N(\la)$ for any $\la\in\Y(N)$ such that $|\la|\le|\mu|$ and $\la\ne\mu$;

(3) its value at $X_N(\mu)$ equals $\prod_{i=1}^N(1+q^{\mu_i}t^{1-i}z)\cdot I_{\mu\mid N}(X_N(\mu);q,t)$. 

Once this is done, the theorem will follow from the characterization of the interpolation polynomials.

\emph{Proof of} (1). This is obvious, because the operator $D_N(z;q,t)$ preserves the space of symmetric polynomials and does not rise degree.

\emph{Proof of} (2). Let $|\la|\le|\mu|$ and $\la\ne\mu$. We will prove a stronger claim: 
$C_A T_{q,A,X} \big(I_{\mu\mid N}(X;q,t)\big)$ vanishes at $X=X_N(\la)$ for every $A$. 

Indeed, set
$$
\de(A):=(\de_1(A),\dots,\de_N(A))\in\{0,1\}^N, \qquad \de_i(A):=\begin{cases} 1, & i\in A, \\ 0, & i\in A^c\end{cases}.
$$ 
Then
$$
C_A T_{q,A,X} \big(I_{\mu\mid N}(X;q,t)\big)\big|_{X=X_N(\la)}=C_A(X_N(\la)) I_{\mu\mid N}(X_N(\la-\de(A));q,t),
$$
where $X_N(\la-\de(A))$ denotes the vector whose $i$th coordinate equals $q^{-\la_i+\de_i(A)}t^{i-1}$, $i=1,\dots,N$.

If $\la-\de(A)$ is a partition, then $I_{\mu\mid N}(X_N(\la-\de(A));q,t)=0$. 

If $\la-\de(A)$ is not a partition, then there are two possible cases: 

$\bullet$ there exists an index $i\in\{1,\dots,N-1\}$ such $i\in A$, $i+1\in A^c$, and $\la_i=\la_{i+1}$;

$\bullet$ there is no such index, but $\la_N=0$ and $N\in A$.

In the first case, $C_A(X_N(\la))$ vanishes because it contains the factor $x_it-x_{i+1}$ that vanishes under the substitution $x_i=q^{-\la_i}t^{i-1}$, $x_{i+1}=q^{-\la_{i+1}}t^i$ as $\la_i=\la_{i+1}$. 

In the second case, $C_A(X_N(\la))$ vanishes because it contains the factor $x_Nt^{1-N}-1$ that vanishes under the substitution $x_N=q^{-\la_N}t^{N-1}$ as $\la_N=0$.  

\emph{Proof of} (3). The above argument also shows that $C_A T_{q,A,X} \big(I_{\mu\mid N}(X;q,t)\big)$ vanishes at $X=X_N(\mu)$ for every $A\ne\varnothing$. It remains to investigate the case of $A=\varnothing$. The operator $T_\varnothing$ is the identity operator, so we are left with the operator of multiplication by the function 
$C_\varnothing$, which is very simple:
$$
C_\varnothing(X)=\prod_{j=1}^N\frac{x_j+z}{x_j}=\prod_{j=1}^N(1+x_j^{-1}z).
$$
Its value at $X=X_N(\mu)$ is equal to $\prod_{j=1}^N(1+q^{\mu_i}t^{1-i}z)$, which completes the proof.
\end{proof}

We may write $D_N(z;q,t)$ as a polynomial in $z$ of degree $N$, with operator coefficients and constant term $1$:
$$
D_N(z;q,t)=1+\sum_{r=1}^N D^r_N(q,t) z^r.
$$
Here $D^r_N(q,t)$ is a $q$-difference operator of order $r$. 

\begin{example}\label{example7.A}
We have
$$
D^1_N(q,t)=\sum_{j=1}^N \frac{x_jt^{1-N}-1}{x_j}\prod_{k:\, k\ne j}\frac{x_jt-x_k}{x_j-x_k}\, T_{q,x_j}+ \sum_{j=1}^N\frac1{x_j}.
$$
Indeed, this is seen at once from Lemma \ref{lemma7.A}.
\end{example}

The next result is a direct consequence of Theorem \ref{thm7.A}, cf. Macdonald \cite[Ch. VI, (4.16)]{M}.

\begin{corollary}
The difference operators $D^1_N(q,t),\dots,D^N_N(q;t)$ commute with each other.
\end{corollary}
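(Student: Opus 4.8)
The statement is flagged as a direct consequence of Theorem \ref{thm7.A}, and the plan is precisely to run, in this inhomogeneous setting, the classical argument that a family of operators admitting a common eigenbasis is automatically commutative. This is exactly how one obtains the commutativity of Macdonald's $q$-difference operators (cf.\ \cite[Ch. VI, (4.16)]{M}); the only change is that the homogeneous basis $\{P_{\mu\mid N}\}$ is replaced by the inhomogeneous basis $\{I_{\mu\mid N}(\ccdot;q,t):\mu\in\Y(N)\}$, which is a basis of $\Sym_\FF(N)$ by \eqref{eq2.A}.

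First I would extract from the eigenvalue relation \eqref{eq7.A1} the action of each individual operator $D^r_N(q,t)$. Writing $D_N(z;q,t)=1+\sum_{r=1}^N D^r_N(q,t)z^r$ and expanding the eigenvalue as
$$
\prod_{i=1}^N\bigl(1+q^{\mu_i}t^{1-i}z\bigr)=1+\sum_{r=1}^N e_r\bigl(q^{\mu_1},\,q^{\mu_2}t^{-1},\dots,\,q^{\mu_N}t^{1-N}\bigr)\,z^r,
$$
where $e_r$ denotes the $r$th elementary symmetric polynomial, one compares the coefficients of $z^r$ on the two sides of \eqref{eq7.A1} and reads off
$$
D^r_N(q,t)\bigl(I_{\mu\mid N}(\ccdot;q,t)\bigr)=e_r\bigl(q^{\mu_1},\,q^{\mu_2}t^{-1},\dots,\,q^{\mu_N}t^{1-N}\bigr)\,I_{\mu\mid N}(\ccdot;q,t)
$$
for all $\mu\in\Y(N)$ and $r=1,\dots,N$. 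Thus every interpolation polynomial is a common eigenvector of $D^1_N(q,t),\dots,D^N_N(q,t)$. To conclude, I would note that for any $r,s$ and any $\mu$ the two compositions $D^r_N(q,t)D^s_N(q,t)$ and $D^s_N(q,t)D^r_N(q,t)$ multiply $I_{\mu\mid N}(\ccdot;q,t)$ by the same scalar, namely the product of the two eigenvalues; hence the two operators agree on a spanning set of $\Sym_\FF(N)$ and therefore coincide.

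The hard part, such as it is, is only bookkeeping: one should confirm that each $D^r_N(q,t)$ genuinely maps $\Sym_\FF(N)$ into itself, and that the common eigenvectors indeed span this space. Both facts are already available: the operator $D_N(z;q,t)$ preserves the space of symmetric polynomials without raising degree (property (1) in the proof of Theorem \ref{thm7.A}), so each of its $z$-coefficients does as well; and the spanning property of the $I_{\mu\mid N}(\ccdot;q,t)$ is recorded right after \eqref{eq2.A}. I would only add the mild caveat that $\Sym_\FF(N)$ is infinite-dimensional, so the relevant notion here is that of possessing a common eigenbasis rather than literal simultaneous diagonalizability of matrices; but this is all the commutativity argument uses, since it is verified on basis vectors one at a time (equivalently, on each finite-dimensional filtration piece $\Sym_\FF(N)_{\le k}$, which the $D^r_N(q,t)$ preserve and which has $\{I_{\mu\mid N}(\ccdot;q,t):|\mu|\le k\}$ as a basis).
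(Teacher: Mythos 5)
Your argument is exactly the one the paper intends by citing Macdonald's \cite[Ch.\ VI, (4.16)]{M}: operators possessing a common eigenbasis (here $\{I_{\mu\mid N}(\ccdot;q,t)\}$, read off from Theorem \ref{thm7.A} by comparing coefficients of $z^r$) necessarily commute. The extra care you take about infinite dimensionality and the filtration pieces is sound and harmless, so the proposal is correct and matches the paper's approach.
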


\subsection{$q$-Difference equations for modified dual functions $\Idm_{\mu\mid N}(\ccdot;q,t)$}

We define the $q$-difference operator $\wh D_N(z;q,t)$ in the same way as we did for $D_N(z;q,t)$: we take the expression for $\wh{\mathcal D}_N(z;q)$ and replace $q$ with $t$ in the coefficients:
$$
\wh D_N(z;q,t):=\frac1{V(u_1,\dots,u_N)}\det\bigg[u_j^{-1}\big\{(u_j-1)u_j^{N-i}t^{i-N}zT^{-1}_{q,u_j}+(u_j+z)u_j^{N-i}\big\}\bigg]_{i,j=1}^N.
$$

\begin{theorem}
For $N=1,2,\dots$ and $\mu\in\Y(N)$, 
\begin{equation}\label{eq7.A2}
\wh D_N(z;q,t)\big(\Idm_{\mu\mid N}(u_1,\dots,u_N;q,t)\big)=\prod_{i=1}^N(1+q^{\mu_i}t^{1-i}z) \cdot\Idm_{\mu\mid N}(u_1,\dots,u_N;q,t).
\end{equation}
\end{theorem}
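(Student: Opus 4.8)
The plan is to mimic exactly the strategy used for Theorem \ref{thm7.A}, transported through the Cauchy identity \eqref{eq3.B}. First I would record the analogue of Lemma \ref{lemma7.A}: expanding the determinant defining $\wh D_N(z;q,t)$ over subsets $A\subseteq\{1,\dots,N\}$ exactly as in the proof of that lemma (using $T^{-1}_{q,u}\circ u^m=u^mq^{-m}T^{-1}_{q,u}$ and the Vandermonde quotient trick with $\wt u_j:=u_jt$ for $j\in A$), one gets
$$
\wh D_N(z;q,t)=\sum_A \wh C_A\, z^{|A|}\, T^{-1}_{q,A,U},\qquad T^{-1}_{q,A,U}:=\prod_{a\in A}T^{-1}_{q,u_a},
$$
with
$$
\wh C_A=\wh C_A(U;t):=t^{-|A|(|A|-1)/2}\prod_{a\in A}\frac{u_a-1}{u_a}\prod_{b\in A^c}\frac{u_b+z}{u_b}\prod_{a\in A}\prod_{b\in A^c}\frac{u_at^{-1}-u_b}{u_a-u_b},
$$
where $U=(u_1,\dots,u_N)$. (The exact powers of $t$ will need care, since the coefficients carry $t^{i-N}$ rather than $t^{N-i}$; I would double-check the sign of the exponent by redoing the $\wt u$-Vandermonde computation.)

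Next I would use the truncated Cauchy identity in the form of Corollary \ref{cor6.A}'s general-parameter analogue. Specialize \eqref{eq3.B} by taking $K=N$ and setting $x_{N+1}=x_{N+2}=\dots$ to the specialization $x_i=t^{i-1}$; this gives
$$
\sum_{\mu\in\Y(N)}I_{\mu\mid N}(x_1,\dots,x_N;q,t)\,\Idm_{\mu\mid N}(u_1,\dots,u_N;q,t)
=\prod_{i=1}^N\prod_{j=1}^N\frac{(x_iu^{-1}_jt;q)_\infty}{(x_iu^{-1}_j;q)_\infty}\cdot\prod_{j=1}^N\frac{(u^{-1}_j;q)_\infty}{(u^{-1}_jt^N;q)_\infty}.
$$
Call the right-hand side $\Pi_N(X;U)$. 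The key claim is the operator identity
$$
D_N(z;q,t)_X\,\Pi_N(X;U)=\wh D_N(z;q,t)_U\,\Pi_N(X;U),
$$
where the subscript indicates which set of variables the operator acts on. Granting this, apply both sides of the Cauchy identity: expand $\Pi_N$ as $\sum_\mu I_{\mu\mid N}(X;q,t)\Idm_{\mu\mid N}(U;q,t)$, act by $D_N(z;q,t)_X$ and use Theorem \ref{thm7.A}, then act by $\wh D_N(z;q,t)_U$ and use the claimed identity; since the $I_{\mu\mid N}(\ccdot;q,t)$ form a basis of $\Sym_\FF(N)$ and the $\Idm_{\mu\mid N}(\ccdot;q,t)$ a topological basis (Corollary \ref{cor5.B} and the remarks in Section \ref{sect3}), comparing coefficients of $I_{\mu\mid N}(X;q,t)$ yields \eqref{eq7.A2}.

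So everything reduces to the operator identity $D_N(z;q,t)_X\Pi_N=\wh D_N(z;q,t)_U\Pi_N$, and this is the step I expect to be the main obstacle. In the $t=q$ case this was Corollary \ref{cor6.A}, proved by a soft argument via the Cauchy identity plus the eigenfunction Lemmas \ref{lemma6.A}, \ref{lemma6.B}; but here I cannot run that argument to prove \eqref{eq7.A2} without circularity, so I need a direct verification of the adjointness. The cleanest route is to work term by term: the kernel $\Pi_N$ factors as $\prod_{j=1}^N G(U)_j\cdot\prod_{i,j}\frac{1}{(x_iu_j^{-1};q)_\infty}(x_iu_j^{-1}t;q)_\infty$ where $G(U)_j=\frac{(u_j^{-1};q)_\infty}{(u_j^{-1}t^N;q)_\infty}$, and the crucial elementary computation is that for a single pair of variables the $q$-shift $T_{q,x}$ and $T^{-1}_{q,u}$ have the same effect on $\dfrac{(xu^{-1}t;q)_\infty}{(xu^{-1};q)_\infty}$: indeed
$$
T_{q,x}\frac{(xu^{-1}t;q)_\infty}{(xu^{-1};q)_\infty}=\frac{1-xu^{-1}}{1-xu^{-1}t}\cdot\frac{(xu^{-1}t;q)_\infty}{(xu^{-1};q)_\infty}
=T^{-1}_{q,u}\frac{(xu^{-1}t;q)_\infty}{(xu^{-1};q)_\infty}.
$$
Feeding the $A$-sum formulas for both operators into $\Pi_N$ and using this identity reduces the claim to matching rational prefactors: the $x$-dependent coefficient $C_A(X;t)$ times the ratio produced by $T_{q,A,X}$ acting on the $\prod_{i\in A}$-part of $\Pi_N$ (together with the $t^{i-1}$-tail $\phi_N$) must equal the $u$-dependent coefficient $\wh C_A(U;t)$ times the ratio from $T^{-1}_{q,A,U}$, for each fixed $A$. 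I would verify this by computing $T_{q,x_a}$ on $\prod_j\frac{(x_au_j^{-1}t;q)_\infty}{(x_au_j^{-1};q)_\infty}$ (it produces $\prod_j\frac{1-x_au_j^{-1}}{1-x_au_j^{-1}t}$, plus the regularized tail contributes $\frac{1-x_a t^{N}/\,\cdot}{\cdots}$-type factors matching $G(U)_j$ after the $T^{-1}_{q,u_a}$ side is likewise expanded), and checking the bookkeeping of the extra factors $x_at^{1-N}-1$ versus $u_a-1$ cancels against these rational ratios. This is a finite, purely algebraic identity in $\FF(x_1,\dots,x_N,u_1,\dots,u_N,z)$; the work is in organizing it, not in any conceptual difficulty, and one may alternatively cite the analogous computation in Macdonald \cite[Ch.~VI, \S4]{M} or the trick invoked in Section \ref{sect6} for the case $t=q$.
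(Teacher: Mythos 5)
Your overall strategy is the one the paper uses: expand $\wh D_N(z;q,t)$ over subsets $A$ exactly as in Lemma \ref{lemma7.A} (this is the paper's Lemma \ref{lemma7.B}), apply the operators to both sides of the truncated Cauchy identity \eqref{eq3.B}, invoke Theorem \ref{thm7.A} on the $x$-side, and compare coefficients. You are also right to identify the operator identity $D_N(z;q,t)_X\wt\Pi=\wh D_N(z;q,t)_U\wt\Pi$ (the paper's \eqref{eq7.B}) as the crux, and your worry about circularity is legitimate: the ``soft'' eigenfunction argument of Section \ref{sect6} cannot be replicated for general $(q,t)$ because the eigenfunction property of $\Idm_{\mu\mid N}$ under $\wh D_N$ is precisely what is to be proved.

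Where you diverge is in how you handle that crux. You propose a direct, term-by-term verification of the rational-function identity $\sum_A C_A E_A=\sum_A\wh C_A\wh E_A$; you sketch the single-pair computation $T_{q,x}\bigl(\tfrac{(xu^{-1}t;q)_\infty}{(xu^{-1};q)_\infty}\bigr)=T^{-1}_{q,u}\bigl(\tfrac{(xu^{-1}t;q)_\infty}{(xu^{-1};q)_\infty}\bigr)=\tfrac{1-xu^{-1}}{1-xu^{-1}t}\cdot(\cdots)$ but you do not actually execute the full bookkeeping (the asymmetric extra factor $\prod_j(u_j^{-1};q)_\infty/(u_j^{-1}t^N;q)_\infty$ only responds to the $u$-shifts, and the reconciliation of $C_A$ vs.\ $\wh C_A$ with the resulting ratios is a nontrivial identity, not a factor-by-factor match). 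The paper avoids this entirely with the trick from Macdonald \cite[Ch.~VI, proof of (3.12)]{M}: observe that both $T_{q,x_j}\wt\Pi/\wt\Pi$ and $T^{-1}_{q,u_j}\wt\Pi/\wt\Pi$ are rational functions of the $x$'s, $u$'s and $t$ alone, \emph{independent of $q$}; hence $D_N(z;q,t)\wt\Pi/\wt\Pi$ and $\wh D_N(z;q,t)\wt\Pi/\wt\Pi$ are $q$-independent, so \eqref{eq7.B} may be checked at $q=t$, where it reduces to Corollary \ref{cor6.A} (which rests on the directly verified determinantal Lemmas \ref{lemma6.A}, \ref{lemma6.B} and therefore involves no circularity). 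You gesture at this in your last sentence but never apply it; it is precisely the observation that turns your ``finite but disorganized algebraic identity'' into a two-line reduction and simultaneously dissolves the circularity worry.

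So: correct skeleton, correct identification of the key lemma, but the proof of \eqref{eq7.B} is left as an uncompleted computation when it can be collapsed to the already-proved $t=q$ case via $q$-independence.
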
 

\begin{proof}
We apply a trick from Macdonald's book: see \cite[Chapter VI, proof of (3.12)]{M}. Consider the Cauchy identity (see \eqref{eq3.B})
$$
\sum_{\mu\in\Y(N)}I_{\mu\mid N}(x_1,\dots,x_N;q,t)\Idm_{\mu\mid N}(u_1,\dots,u_N;q,t)=\prod_{i,j=1}^N\frac{(x_iu_j^{-1}t;q)_\infty}{(x_iu_j^{-1};q)_\infty}\cdot \prod_{j=1}^N\frac{(u_j^{-1};q)_\infty}{(u_j^{-1}t^N;q)_\infty}
$$
and introduce a notation for the right-hand side:
\begin{equation}\label{eq7.Pi}
\wt\Pi(\ccdot;q,t)=\wt\Pi(x_1,\dots,x_N,u_1,\dots,u_N;q,t):=\prod_{i,j=1}^N\frac{(x_iu_j^{-1}t;q)_\infty}{(x_iu_j^{-1};q)_\infty}\cdot \prod_{j=1}^N\frac{(u_j^{-1};q)_\infty}{(u_j^{-1}t^N;q)_\infty}.
\end{equation}

By virtue of Theorem \ref{thm7.A}, it suffices to prove the relation
\begin{equation}\label{eq7.B}
D_N(z;q,t)\wt\Pi(\ccdot;q,t)\\
=\wh D_N(z;q,t)\wt\Pi(\ccdot;q,t).
\end{equation}

The key observation is that 
$$
T_{q,x_j}\big(\wt\Pi(x_1,\dots,x_N,u_1,\dots,u_N;q,t)\big)=(\cdots)\wt\Pi(x_1,\dots,x_N,u_1,\dots,u_N;q,t),
$$
where $(\cdots)$ is some factor that \emph{does not depend on $q$}. It follows that
$$
D_N(z;q,t)\big(\wt\Pi(x_1,\dots,x_N,u_1,\dots,u_N;q,t)\big)=(\cdots)\wt\Pi(x_1,\dots,x_N,u_1,\dots,u_N;q,t),
$$
where $(\cdots)$ is some expression that does not depend on $q$.

Likewise,
$$
\wh D_N(z;q,t)\big(\wt\Pi(x_1,\dots,x_N,u_1,\dots,u_N;q,t)\big)=(\cdots)\wt\Pi(x_1,\dots,x_N,u_1,\dots,u_N;q,t),
$$
where $(\cdots)$ is some expression that does not depend on $q$.

Therefore, it suffices to prove the desired relation \eqref{eq7.B} for $q=t$. But then it coincides with the formula of Corollary \ref{cor6.A}, where one should replace $q$ with $t$. 
\end{proof}

By analogy with the operators $D^r_N(q,t)$ we introduce operators $\wh D^r_N(q,t)$ as the coefficients in the expansion of $\wh D_N(z;q,t)$ in powers of $z$:
$$
\wh D_N(z;q,t)=1+\sum_{r=1}^N \wh D^r_N(q,t)z^r.
$$

\begin{corollary}
The operators $\wh D^1_N(q,t),\dots,\wh D^N_N(q,t)$ commute with each other.
\end{corollary}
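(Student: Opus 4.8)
The plan is to derive the commutativity from the preceding theorem, which realizes the $\Idm_{\mu\mid N}(\ccdot;q,t)$ as a complete family of joint eigenvectors of the one-parameter operator family $\wh D_N(z;q,t)$. First I would expand \eqref{eq7.A2} in powers of $z$: this gives, for every $\mu\in\Y(N)$,
$$
\wh D^r_N(q,t)\,\Idm_{\mu\mid N}(\ccdot;q,t)=e_r\!\left(q^{\mu_1},\,q^{\mu_2}t^{-1},\dots,q^{\mu_N}t^{1-N}\right)\cdot\Idm_{\mu\mid N}(\ccdot;q,t),\qquad r=1,\dots,N,
$$
where $e_r$ is the $r$th elementary symmetric polynomial, so that each $\wh D^r_N(q,t)$ is diagonalized by the family $\{\Idm_{\mu\mid N}(\ccdot;q,t):\ell(\mu)\le N\}$. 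Equivalently and more symmetrically: introducing a second auxiliary variable $w$, the eigenvalue $\prod_{i=1}^N(1+q^{\mu_i}t^{1-i}z)(1+q^{\mu_i}t^{1-i}w)$ of $\wh D_N(z;q,t)\wh D_N(w;q,t)$ is invariant under $z\leftrightarrow w$. Either way, the commutator $[\wh D^r_N(q,t),\wh D^s_N(q,t)]$ annihilates every $\Idm_{\mu\mid N}(\ccdot;q,t)$ with $\ell(\mu)\le N$.

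The second step is to pass from ``annihilates every basis element'' to ``is the zero operator''. I would use two facts. The first is that $\{\Idm_{\mu\mid N}(\ccdot;q,t):\ell(\mu)\le N\}$ is a topological basis of $\Symd(N)$ (noted after Definition \ref{def3.B}): this follows from $\Idm_{\mu\mid N}(u;q,t)=Q_{\mu\mid N}(u^{-1};q,t)+(\text{higher-order terms in the }u_i^{-1})$ together with the triangularity of the homogeneous basis $\{P_{\mu\mid N}\}$. The second is that each $\wh D^r_N(q,t)$ is a continuous operator on $\Symd(N)$; for this I would check from the defining formula that $\wh D_N(z;q,t)$ maps symmetric rational functions to symmetric rational functions (the standard cancellation-of-poles argument, exactly as for Macdonald's $q$-difference operators) and does not lower the order in the variables $u_i^{-1}$, so that it extends to a continuous endomorphism of $\Symd(N)\otimes_\FF\FF[z]$. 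Granting these, the commutator $[\wh D^r_N(q,t),\wh D^s_N(q,t)]$ is continuous and vanishes on a topological basis, hence is identically zero, which is the assertion.

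The argument is essentially formal, and the only point demanding any care — the ``main obstacle'', such as it is — is the bookkeeping in the second step: specifying precisely the space on which the $\wh D^r_N(q,t)$ act and verifying continuity there, so that vanishing on the (dense) span of the $\Idm_{\mu\mid N}(\ccdot;q,t)$ propagates to all of $\Symd(N)$. A variant that sidesteps the explicit eigenvalues altogether: combine the conjugation identity $\wh D_N(z;q,t)\,\wt\Pi(\ccdot;q,t)=D_N(z;q,t)\,\wt\Pi(\ccdot;q,t)$ (see \eqref{eq7.B}) with the already established commutativity of the $D^r_N(q,t)$ to get $\wh D_N(z;q,t)\wh D_N(w;q,t)\,\wt\Pi=\wh D_N(w;q,t)\wh D_N(z;q,t)\,\wt\Pi$, then extract the coefficient of $I_{\mu\mid N}(x;q,t)$ in the expansion $\wt\Pi=\sum_\mu I_{\mu\mid N}(x;q,t)\Idm_{\mu\mid N}(u;q,t)$; this recovers the same conclusion and again reduces to the continuity/completeness step above.
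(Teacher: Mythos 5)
Your proposal is correct and is essentially the argument the paper leaves implicit: the $\Idm_{\mu\mid N}(\ccdot;q,t)$ form a common eigenbasis (a topological basis of $\Symd(N)$) for the $\wh D^r_N(q,t)$, so the commutators annihilate a dense family and hence vanish. The paper states the corollary without proof, by analogy with the corresponding statement for the $D^r_N(q,t)$ (which it attributes to the same diagonalization argument, citing Macdonald, Ch.\ VI, (4.16)); you have simply made explicit the continuity/completeness point that the paper takes for granted, and your alternative route via the intertwining relation \eqref{eq7.B} is a legitimate variant leading to the same conclusion.
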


\begin{example}\label{example7.B}
We have
$$
\wh D^1_N(q,t)=\sum_{j=1}^N \frac{u_j-1}{u_j}\prod_{k\ne j}\frac{u_jt^{-1}-u_k}{u_j-u_k}\, T^{-1}_{q,u_j}+ \sum_{j=1}^N\frac1{u_j}.
$$
Indeed, this is seen at once from the next lemma.
\end{example}

\begin{lemma}\label{lemma7.B}
Let, as before,  $A$ range over the set of all subsets of $\{1,\dots,N\}$ and $A^c:=\{1,\dots,N\}\setminus A$. We have
$$
\wh D_N(z;q,t)=\sum_A \wh C_A z^{|A|}T^{-1}_{q,A,U}
$$
where $U$ denotes the $N$-tuple of variables $u_1,\dots,u_N$,
$$
T^{-1}_{q,A,U}:=\prod_{a\in A}T^{-1}_{q,u_a},
$$
and
$$
\wh C_A=\wh C_A(U;t):=t^{-|A|(|A|-1)/2}\cdot \prod_{a\in A}\frac{u_a-1}{u_a}\cdot\prod_{b\in A^c}\frac{u_b+z}{u_b}\cdot\prod_{a\in A}\prod_{b\in A^c}\frac{u_at^{-1}-u_b}{u_a-u_b}.
$$
\end{lemma}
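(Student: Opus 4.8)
The plan is to follow, almost word for word, the proof of Lemma~\ref{lemma7.A}, with $q$ replaced by $t$ in the coefficients and with the forward $q$-shifts $T_{q,x_j}$ replaced by the backward shifts $T^{-1}_{q,u_j}$. First I would start from the determinantal definition of $\wh D_N(z;q,t)$ and expand the determinant over the symmetric group $S_N$. The $(i,j)$ entry
$u_j^{-1}\big\{(u_j-1)u_j^{N-i}t^{i-N}zT^{-1}_{q,u_j}+(u_j+z)u_j^{N-i}\big\}$
is a sum of two operator terms — one carrying the shift $T^{-1}_{q,u_j}$, one not — so the Leibniz expansion $\sum_{\sigma\in S_N}\sgn(\sigma)\prod_j(\cdots)_{\sigma(j),j}$ breaks into $2^N$ pieces, indexed by the subset $A\subseteq\{1,\dots,N\}$ of columns $j$ for which the shift term is chosen. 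Since $T^{-1}_{q,u_a}$ and $T^{-1}_{q,u_b}$ act on different variables, they commute with one another and with every coefficient function of the remaining variables; hence in the $A$-piece all the shift operators may be pulled to the right, yielding the operator $T^{-1}_{q,A,U}=\prod_{a\in A}T^{-1}_{q,u_a}$ and the scalar $z^{|A|}$.

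Next I would factor the $A$-piece, using $u_j^{N-i}t^{i-N}=(u_jt^{-1})^{N-i}$, into the $\sigma$-independent product $\prod_{a\in A}\tfrac{u_a-1}{u_a}\prod_{b\in A^c}\tfrac{u_b+z}{u_b}$ times the residual sum
$\sum_{\sigma\in S_N}\sgn(\sigma)\prod_{a\in A}(u_at^{-1})^{N-\sigma(a)}\prod_{b\in A^c}u_b^{N-\sigma(b)}$. This residual sum is precisely $\det[\wt u_j^{\,N-i}]_{i,j=1}^N=V(\wt u_1,\dots,\wt u_N)$, where $\wt u_j:=u_jt^{-1}$ for $j\in A$ and $\wt u_j:=u_j$ for $j\in A^c$. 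Dividing by the overall prefactor $V(u_1,\dots,u_N)$ coming from the definition of $\wh D_N$, I would compute the ratio $V(\wt u_1,\dots,\wt u_N)/V(u_1,\dots,u_N)$ factor by factor over the pairs: each pair inside $A$ contributes $t^{-1}$ (hence $t^{-|A|(|A|-1)/2}$ in total), each pair inside $A^c$ contributes $1$, and a mixed pair $\{a,b\}$ with $a\in A$, $b\in A^c$ contributes $\tfrac{u_at^{-1}-u_b}{u_a-u_b}$ — the same value whether $a<b$ or $a>b$, since the two sign flips cancel. Multiplying the three resulting factors gives exactly $\wh C_A$, and summing over $A$ yields the asserted expansion.

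The computation is entirely routine, being structurally identical to that of Lemma~\ref{lemma7.A}; the only points requiring a little care are the bookkeeping of operator ordering (verifying that every coefficient function can indeed be moved to the left of the shift operators attached to the other variables, so that the $A$-piece genuinely factors as above) and the sign check in the mixed-pair contribution to the Vandermonde ratio. Neither is a genuine obstacle.
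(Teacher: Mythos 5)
Your proof is correct and is exactly the argument the paper has in mind: the paper's proof of Lemma~\ref{lemma7.B} consists of the single line ``The same argument as in Lemma~\ref{lemma7.A},'' and your write-up is a faithful transcription of that argument with $T_{q,x_j}\mapsto T^{-1}_{q,u_j}$ and $t^{N-i}\mapsto t^{i-N}$, including the correct Vandermonde-ratio bookkeeping with $\wt u_j=u_jt^{-1}$ for $j\in A$ yielding the factor $t^{-|A|(|A|-1)/2}$.
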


\begin{proof} The same argument as in Lemma \ref{lemma7.A}.
\end{proof}

\section{The Jack limit}\label{sect8}

In this section we are working over the base field $\Q(\kk)$, where $\kk$ is a new formal variable. 
Consider the symmetric Jack polynomials \cite[ch. VI, \S10]{M}. We denote them as $P_{\mu\mid N}(x_1,\dots,x_N;\kk)$, where $\mu\in\Y(N)$ as above. The connection with Macdonald's notation is the following: 
$$
P_{\mu\mid N}(x_1,\dots,x_N;\kk)=P^{(1/\kk)}_\mu(x_1,\dots,x_N), \qquad \mu\in\Y(N)
$$
(that is,  our $\kk$ is inverse to Macdonald's parameter $\al$). Recall that
\begin{equation}\label{eq8.A1}
P_{\mu\mid N}(x_1,\dots,x_N;\kk)=\lim_{q\to1}P_{\mu\mid N}(x_1,\dots,x_N;q,q^\kk);
\end{equation}
here and in what follows, whenever a limit transition $q\to1$ is considered, we temporarily regard  $q$ and $\kk$ as numeric parameters; but the final expression always makes sense over $\Q(\kk)$. 

The combinatorial presentation of the Jack polynomials can be written as
$$
P_{\mu\mid N}(x_1,\dots,x_N;\kk)=\sum_{T\in\RTab(\mu,N)}\psi_T(\kk) \prod_{(i,j)\in\mu}x_{T(i,j)},
$$
where $\psi_T(\kk)$ are certain rational functions of $\kk$; in Macdonald's notation,  $\psi_T(\kk)=\psi_T^{(1/\kk)}$, see \cite[ch. VI, (10.10), (10.11), and (10.12)]{M}. Again, these quantities can be obtained 
by a degeneration from $\psi_T(q,t)$:
$$
\psi_T(\kk):=\lim_{q\to1}\psi_T(q,q^\kk).
$$

Likewise, there are dual Jack polynomials
$$
Q_{\mu\mid N}(x_1,\dots,x_N;\kk)=Q^{(1/\kk)}_\mu(x_1,\dots,x_N), \qquad \mu\in\Y(N),
$$
with the combinatorial presentation
$$
Q_{\mu\mid N}(x_1,\dots,x_N;\kk)=\sum_{T\in\RTab(\mu,N)}\varphi_T(\kk) \prod_{(i,j)\in\mu}x_{T(i,j)},
$$
where $\varphi_T(\kk)=\varphi^{(1/\kk)}_T$ are rational functions of $\kk$ satisfying 
$$
\varphi_T(\kk):=\lim_{q\to1}\varphi_T(q,q^\kk).
$$
We also have
\begin{equation}\label{eq8.B1}
Q_{\mu\mid N}(x_1,\dots,x_N;\kk)=\lim_{q\to1}Q_{\mu\mid N}(x_1,\dots,x_N;q,q^\kk);
\end{equation}

We define now the Jack analogues of the interpolation Macdonald polynomials $I_{\mu\mid N}(\ccdot;q,t)$ and of the rational functions $\wt H_{\mu\mid N}(\ccdot;q,t)$  by means of combinatorial formulas: 
\begin{gather*}
I_{\mu\mid N}(x_1,\dots,x_N;\kk)=\sum_{T\in\RTab(\mu,N)}\psi_T(\kk)\prod_{(i,j)\in\mu}(x_{T(i,j)}-(j-1)+(T(i,j)+i-2)\kk),\\
\wt H_{\mu\mid N}(u_1,\dots,u_N;\kk)=\sum_{T\in\RTab(\mu,N)}\prod_{i=1}^N \Wm_{\mu(i-1)/\mu(i)}(u_i;\kk),
\end{gather*}
where $\mu=\mu(0)\succ\mu(1)\succ\dots\succ\mu(N)=\varnothing$ represents $T$ and,  for a horizontal strip $\la/\ka$,
$$
\Wm_{\la/\ka}(u;\kk):=\varphi_{\la/\ka}(\kk)\frac{\prod_{i=1}^{\ell(\la)}\prod_{j=1}^{\ka_i-\la_{i+1}}(u-(\ka_i+j-1)+i\kk)}{(u-1)\dots(u-\la_1)}.
$$

These formulas are obtained by degenerating the formulas \eqref{eq2.E},  \eqref{combJ}, and \eqref{eq5.B}: 
\begin{gather}
I_{\mu\mid N}(x_1,\dots,x_N;\kk):=\lim_{q\to1}(1-q)^{-|\mu|} I_{\mu\mid N}(q^{-x_1},\dots,q^{-x_N}; q, q^{\kk}), \label{eq8.A}\\
\wt H_{\mu\mid N}(u_1,\dots,u_N;\kk):=\lim_{q\to1}(1-q)^{|\mu|} \wt H_{\mu\mid N}(q^{-u_1},\dots,q^{-u_N}; q, q^{\kk}).\label{eq8.B}
\end{gather}

The polynomials $I_{\mu\mid N}(x_1,\dots,x_N;\kk)$ coincide (up to notation) with the polynomials introduced by Knop and Sahi in \cite{KnopSahi}. In the notation of Okounkov \cite{Ok-CM},
$$
I_{\mu\mid N}(x_1,\dots,x_N;\kk)=P^*_\mu(x_1,x_2+\kk,\dots,x_N+(N-1)\kk;\kk).
$$

The rational functions $\wt H_{\mu\mid N}(u_1,\dots,u_N;\kk)$ are alternately regarded as power series, that is, elements of  $\Q(\kk)[[u_1^{-1},\dots,u_N^{-1}]]$. 

Note that the degeneration in \eqref{eq8.A} and \eqref{eq8.B} is of different sort as compared with \eqref{eq8.A1} and \eqref{eq8.B1}. Nevertheless, we have 
\begin{gather}
I_{\mu\mid N}(x_1,\dots,x_N;\kk)=P_{\mu\mid N}(x_1,\dots,x_N;\kk) +\text{lower degree terms in $x_i$'s}, \label{eq8.C1}\\
\wt H_{\mu\mid N}(u_1,\dots,u_N;\kk)=Q_{\mu\mid N}(u^{-1}_1,\dots,u^{-1}_N;\kk) +\text{lower degree terms in $u_i$'s}.  \label{eq8.C2}
\end{gather}

Set
\begin{equation}\label{eq8.C3}
F(x,u;\kk):=1+\sum_{m=1}^\infty\frac{(\kk)_m}{m!}\frac{x(x-1)\dots(x-m+1)}{(u-1)\dots(u-m)}.
\end{equation}
We regard this series as an element of $\Q(\kk)[[x,u^{-1}]]$. 

\begin{theorem}\label{thm8.A}
The following Cauchy-type identity holds in the algebra of formal power series $\Q(\kk)[[x_1,\dots,x_N,u^{-1}_1,\dots,u^{-1}_N]]${\rm:}
\begin{equation}\label{eq8.D1}
\sum_{\mu\in\Y(N)}I_{\mu\mid N}(x_1,\dots,x_N;\kk)\wt H_{\mu\mid N}(u_1,\dots,u_N;\kk)=\prod_{i=1}^N\prod_{j=1}^N \frac{F(x_i, u_j; \kk)}{F(-(i-1)\kk,u_j;\kk)}. 
\end{equation}
\end{theorem}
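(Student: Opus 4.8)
The plan is to transplant the difference-operator mechanism of Sections~\ref{sect6} and~\ref{sect7} to the Jack regime. The first step is to introduce the Jack degenerations $\mathcal D_N(z;\kk)$ and $\wh{\mathcal D}_N(z;\kk)$ of the operators $D_N(z;q,t)$ and $\wh D_N(z;q,t)$: set $t=q^{\kk}$, conjugate by the substitutions $x_i\mapsto q^{-x_i}$ (resp.\ $u_j\mapsto q^{-u_j}$), under which the $q$-shifts $T_{q,x_j}$ and $T^{-1}_{q,u_j}$ become the unit translations $x_j\mapsto x_j-1$ and $u_j\mapsto u_j+1$; then put $z\mapsto-q^{z}$, divide by $(-\log q)^N$, and let $q\to1$. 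Carrying this limit through \eqref{eq7.A1}, \eqref{eq7.A2} and using the degeneration relations \eqref{eq8.A}--\eqref{eq8.B} gives the Jack eigenrelations
\[
\mathcal D_N(z;\kk)\,I_{\mu\mid N}(\ccdot;\kk)=\mathbf e_\mu(z;\kk)\,I_{\mu\mid N}(\ccdot;\kk),\qquad
\wh{\mathcal D}_N(z;\kk)\,\Idm_{\mu\mid N}(\ccdot;\kk)=\mathbf e_\mu(z;\kk)\,\Idm_{\mu\mid N}(\ccdot;\kk),
\]
with one and the same eigenvalue $\mathbf e_\mu(z;\kk)=\prod_{i=1}^N\bigl(z+\mu_i-(i-1)\kk\bigr)$. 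From the element $\mu_i-(i-1)\kk\in\Q(\kk)$ one reads off both $i$ and $\mu_i$ (via the coefficient of $\kk$ and the constant term), so the multiset $\{\mu_i-(i-1)\kk\}_{i=1}^N$ determines $\mu$; hence the polynomials $\mathbf e_\mu(\ccdot;\kk)$ are pairwise distinct and the joint spectrum below is simple.

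The second step records the two characterizations. By \eqref{eq8.C1} the family $\{I_{\mu\mid N}(\ccdot;\kk)\}_{\mu\in\Y(N)}$ is a basis of $\Q(\kk)[x_1,\dots,x_N]^{S_N}$ compatible with the degree filtration, and by \eqref{eq8.C2} together with the argument of Lemma~\ref{lemma3} the family $\{\Idm_{\mu\mid N}(\ccdot;\kk)\}_{\mu\in\Y(N)}$ is a topological basis of the algebra $\Q(\kk)[[u_1^{-1},\dots,u_N^{-1}]]^{S_N}$ of symmetric power series. Since the coefficients of $\mathcal D_N(z;\kk)$ (resp.\ of $\wh{\mathcal D}_N(z;\kk)$), as polynomials in $z$, form commuting families having these as joint eigenbases with simple joint spectrum, any element of the respective space that is a joint eigenvector with the eigenvalue collection of $\mathbf e_\mu(\ccdot;\kk)$ must be a scalar multiple of $I_{\mu\mid N}(\ccdot;\kk)$ (resp.\ of $\Idm_{\mu\mid N}(\ccdot;\kk)$).

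The third --- and crucial --- step is the kernel identity $\mathcal D_N(z;\kk)_x\,\mathbf K=\wh{\mathcal D}_N(z;\kk)_u\,\mathbf K$, where $\mathbf K=\mathbf K(x;u;\kk)$ is the right-hand side of \eqref{eq8.D1}. Here one uses that $F(x,u;\kk)$ from \eqref{eq8.C3} satisfies the contiguous relations
\[
(x-u-\kk)\,F(x-1,u;\kk)=(x-u)\,F(x,u;\kk),\qquad
u\,(x-u-\kk)\,F(x,u+1;\kk)=(u+\kk)(x-u)\,F(x,u;\kk),
\]
valid in $\Q(\kk)[[x,u^{-1}]]$ and checked coefficientwise from the series \eqref{eq8.C3}. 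Since $\mathbf K=\prod_{i,j}F(x_i,u_j;\kk)/F(-(i-1)\kk,u_j;\kk)$ and $\mathcal D_N(z;\kk)$ (resp.\ $\wh{\mathcal D}_N(z;\kk)$) is assembled from the unit shifts $x_j\mapsto x_j-1$ (resp.\ $u_j\mapsto u_j+1$), each operator carries $\mathbf K$ into $\mathbf K$ times an explicit rational function of $x_1,\dots,x_N,u_1,\dots,u_N,z,\kk$; the two rational functions coincide because they are the $q\to1$ limits of the coefficient functions on the two sides of the Macdonald relation \eqref{eq7.B} --- the shift ratios of $F(x,u;\kk)$ being exactly the limits of those of the one-variable Macdonald kernel --- or, alternatively, by a direct polynomial check after clearing denominators.

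The last step assembles everything. Expand $\mathbf K=\sum_{\mu\in\Y(N)}g_\mu(u)\,I_{\mu\mid N}(x;\kk)$ with $g_\mu\in\Q(\kk)[[u_1^{-1},\dots,u_N^{-1}]]^{S_N}$; this expansion is well defined and locally finite in $\mu$, because the degree-$k$ part of $\mathbf K$ in the $u_j^{-1}$ is a symmetric polynomial in the $x_i$ of degree $\le k$, so only $\mu$ with $|\mu|\le k$ contribute and $g_\mu$ starts in degree $\ge|\mu|$. Applying $\mathcal D_N(z;\kk)_x$ to both sides, using the $I$-eigenrelation on the left and the kernel identity on the right (where $\wh{\mathcal D}_N(z;\kk)$ acts only on the $u$-variables, hence through the $g_\mu$), and comparing coefficients of $I_{\mu\mid N}(x;\kk)$, one obtains $\wh{\mathcal D}_N(z;\kk)\,g_\mu=\mathbf e_\mu(z;\kk)\,g_\mu$, so $g_\mu=c_\mu\,\Idm_{\mu\mid N}(\ccdot;\kk)$ with $c_\mu\in\Q(\kk)$ by the second step. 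To see $c_\mu=1$, pass to leading homogeneous components: rescaling $x_i\mapsto rx_i$, $u_j\mapsto ru_j$ and letting $r\to\infty$ one has $F(rx,ru;\kk)\to(1-x/u)^{-\kk}$ and $F(-(i-1)\kk,ru;\kk)\to1$, so $\mathbf K$ degenerates to $\prod_{i,j}(1-x_iu_j^{-1})^{-\kk}=\sum_{\mu\in\Y(N)}P_{\mu\mid N}(x;\kk)Q_{\mu\mid N}(u^{-1};\kk)$, the classical Jack Cauchy identity --- the harmless $(q,t)\to(1,1)$ limit of \eqref{eq1.A} --- while the $\mu$-summand on the left degenerates to $c_\mu P_{\mu\mid N}(x;\kk)Q_{\mu\mid N}(u^{-1};\kk)$; linear independence of the products $P_{\mu\mid N}\otimes Q_{\mu\mid N}$ forces $c_\mu=1$, which is \eqref{eq8.D1}. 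The main obstacle is the first and third steps: pinning down the Jack operators as correctly normalized limits (the naive limit is a scalar unless the spectral variable is rescaled exactly as above) and establishing the kernel identity --- the Jack shadow of the Section~\ref{sect7} computation, whose rigorous justification is precisely the delicate point that makes a straight limit of \eqref{eq1.C} troublesome; the remaining manipulations with the locally finite expansion are routine.
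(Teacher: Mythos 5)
Your proof follows the paper's own argument essentially verbatim: the same Jack degenerations $D^\Jack_N(z;\kk)$ and $\wh D^\Jack_N(z;\kk)$ of the $q$-difference operators, the same eigenvalue polynomials $\prod_i(z+\mu_i-(i-1)\kk)$, the same kernel identity for $\wt\Pi^\Jack$ deduced from the Macdonald relation \eqref{eq7.B} via the two shift relations for $F$, and the same scaling $x_i\mapsto rx_i$, $u_j\mapsto ru_j$, $r\to\infty$ to pin down $c_\mu=1$ through the classical Jack Cauchy identity. The only deviations are cosmetic: you expand $\mathbf K$ in the $I$-basis (the paper uses the dual $\wt H$-expansion), use $(-\log q)^N$ rather than $(1-q)^N$ in the normalization (these agree in the limit), and verify the contiguous relations for $F$ coefficientwise rather than via the Gauss summation at nonnegative integer $x$ as the paper does.
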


\begin{remark}\label{rem8.A}
There is an alternative expression for the right-hand side of \eqref{eq8.D1}:
\begin{equation}\label{eq8.D2} 
\prod_{i=1}^N\prod_{j=1}^N \frac{F(x_i, u_j; \kk)}{F(-(i-1)\kk,u_j;\kk)}
=\prod_{i=1}^N\prod_{j=1}^N F(x_i+(i-1)\kk, u_j+(i-1)\kk; \kk).
\end{equation}

\end{remark}

\begin{proof}
\emph{Step 1: Application of difference operators.}
Our argument relies on Jack analogues of the $q$-difference operators $D_N(z;q,t)$ and $\wh D_N(z;q,t)$ from Section \ref{sect7}. These are certain difference operators, denoted as $D^\Jack_N(z;\kk)$ and $\wh D^\Jack_N(z,\kk)$. Let us describe their key properties (proofs will be given later) and explain how to use them for the derivation of \eqref{eq8.D1}. 

The operator $D^\Jack_N(z;\kk)$ depends polynomially on parameter $z$,  acts on the variables $x_1,\dots,x_N$, and one has (step 2 below)
\begin{equation}\label{eq8.E1}
D^\Jack_N(z;\kk) I_{\mu\mid N}(\ccdot;\kk)= \prod_{i=1}^N(\mu_i+(1-i)\kk+z) \cdot I_{\mu\mid N}(\ccdot;\kk), \qquad \mu\in\Y(N).
\end{equation}

Likewise, the operator $\wh D^\Jack_N(z;\kk)$ depends polynomially on parameter $z$,  acts on the variables $u_1,\dots,u_N$, and one has (step 3 below)
\begin{equation}\label{eq8.E2}
\wh D^\Jack_N(z;\kk) \wt H_{\mu\mid N}(\ccdot;\kk)=\prod_{i=1}^N(\mu_i+(1-i)\kk+z) \cdot \wt H_{\mu\mid N}(\ccdot;\kk), \qquad \mu\in\Y(N),
\end{equation}
with the same eigenvalues as in \eqref{eq8.E1}. 

Next, introduce a notation for the right-hand side of \eqref{eq8.D1}:
\begin{equation}\label{eq8.H}
\wt\Pi^\Jack(\ccdot;\kk)=\wt\Pi^\Jack(x_1,\dots,x_N,u_1,\dots,u_N;\kk):=\prod_{i=1}^N\prod_{j=1}^N \frac{F(x_i, u_j; \kk)}{F(-(i-1)\kk,u_j;\kk)}. 
\end{equation}
The following equality holds (steps 4--5 below)
\begin{equation}\label{eq8.E3}
D^\Jack_N(z;\kk)\wt\Pi^\Jack(\ccdot;\kk)\\
=\wh D^\Jack_N(z;\kk)\wt\Pi^\Jack(\ccdot;\kk),
\end{equation}
cf. \eqref{eq7.B}. 

Now we show how to deduce  \eqref{eq8.D1} from \eqref{eq8.E1}, \eqref{eq8.E2}, and \eqref{eq8.E3}.  

From the definition \eqref{eq8.C3} it follows that 
$\wt\Pi^\Jack(\ccdot;\kk)$ may be regarded as a symmetric formal power series in $u_1^{-1},\dots,u_N^{-1}$ with coefficients in the algebra of symmetric polynomials over $\Q(\kk)$. On the other hand, as is seen from \eqref{eq8.C2}, any symmetric formal power series in $u_1^{-1},\dots,u_N^{-1}$ is uniquely written as a (generally, infinite) linear combination of the elements $\wt H_{\mu\mid N}(\ccdot;\kk)$. Therefore, there exists a unique series expansion
\begin{equation}\label{eq8.E4}
\wt\Pi^\Jack(x_1,\dots,x_N,u_1,\dots,u_N;\kk)=\sum_{\mu\in\Y(N)}f_\mu(x_1,\dots,x_N) \wt H_{\mu\mid N}(u_1,\dots,u_N;\kk),
\end{equation}
where the coefficients $f_\mu(x_1,\dots,x_N)$ are some symmetric polynomials over $\Q(\kk)$. 

Applying to both sides of \eqref{eq8.E4} the operator $\wh D^\Jack_N(z;\kk)$ and using \eqref{eq8.E3} we obtain that the polynomials $f_\mu(x_1,\dots,x_N)$ are eigenfunctions of $D^\Jack_N(z;\kk)$ with the same eigenvalues as the polynomials $I_{\mu\mid N}(x_1,\dots,x_N;\kk)$. Since the eigenvalues (regarded as polynomials in $z$) are pairwise distinct, the two systems of polynomials are the same, up to proportionality. Therefore, \eqref{eq8.E4} can be written in the form
\begin{multline}\label{eq8.E5}
\wt\Pi^\Jack(x_1,\dots,x_N,u_1,\dots,u_N;\kk)\\
=\sum_{\mu\in\Y(N)}c_\mu I_{\mu\mid N}(x_1,\dots,x_N;\kk) \wt H_{\mu\mid N}(u_1,\dots,u_N;\kk)\end{multline}
with some coefficients $c_\mu\in\Q(\kk)$.

Now we are going to show that $c_\mu=1$ for all $\mu$. For this purpose, we multiply in \eqref{eq8.E5} all $x_i$ and $u_j$ by an extra parameter $r$. Observe that on the right-hand side of \eqref{eq8.E5} we have
\begin{multline}\label{eq8.E6}
I_{\mu\mid N}(x_1r,\dots,x_Nr;\kk) \wt H_{\mu\mid N}(u_1r,\dots,u_Nr;\kk)\\
=P_{\mu\mid N}(x_1,\dots,x_N;\kk) Q_{\mu\mid N}(u_1^{-1},\dots,u_N^{-1};\kk)+O(1/r),
\end{multline}
as it is seen from \eqref{eq8.C1} and \eqref{eq8.C2}.

Let us turn to the left-hand side of \eqref{eq8.E5}. From \eqref{eq8.C3} it follows that
\begin{equation*}
F(xr,ur;\kk)=1+\sum_{m=1}^\infty \frac{(\kk)_m}{m!}(xu^{-1})^m+O(1/r)=\frac1{(1-xu^{-1})^\kk}+O(1/r).
\end{equation*}
Note also that 
$$
F(\const,ur;\kk)=1+O(1/r).
$$
This implies  (see the definition \eqref{eq8.H})
\begin{equation}\label{eq8.E7}
\wt\Pi^\Jack(x_1r,\dots,x_Nr,u_1r,\dots,u_Nr;\kk)=\prod_{i=1}^N\prod_{j=1}^N\frac1{(1-x_iu_j^{-1})^\kk}+O(1/r).
\end{equation}

On the other hand, the Jack version of the Cauchy identity \cite[ch. VI, (10.4)]{M} tells us that  
$$
\sum_{\mu\in\Y(N)} P_{\mu\mid N}(x_1,\dots,x_N;\kk) Q_{\mu\mid N}(y_1,\dots,y_N;\kk)=\prod_{i=1}^N\prod_{j=1}^N\frac1{(1-x_i y_j)^\kk}.
$$
Comparing this with \eqref{eq8.E6} and \eqref{eq8.E7} we conclude that all coefficients $c_\mu$ are equal to $1$. 

To complete the proof we have to exhibit the difference operators with the desired properties. This is done below. 

\smallskip

\emph{Step 2: The difference operator $D^\Jack_N(z;\kk)$.} This operator is obtained from the operator $D_N(z;q,t)$ by a scaling limit transition similar to that in \eqref{eq8.A}. Namely, we regard $q$ and $\kk$ as  numeric parameters, replace each $x_i$ with $q^{-x_i}$, replace $z$ with $-q^z$, substitute $t=q^\kk$, then  divide the operator by $(1-q)^N$, and let $q$ go to $1$. To compute the limit it is convenient to deal not with the initial definition of $D_N(z;q,t)$ given in \eqref{eq7.A} but with the alternate expression from Lemma  \ref{lemma7.A}. Then we readily  obtain  
\begin{equation*}
D^\Jack_N(z;\kk)=\sum_A C^\Jack_A T^{-1}_{A,X},
\end{equation*}
where $A$ ranges, as before, over subsets of $\{1,\dots,N\}$,
\begin{equation*}
C^\Jack_A=C^\Jack_A(x_1,\dots,x_N;\kk):=\prod_{a\in A}(x_a+(N-1)\kk)\cdot\prod_{b\in A^c}(x_b+z) \cdot\prod_{a\in A}\prod_{b\in A^c} \frac{x_a-x_b-\kk}{x_a-x_b},
\end{equation*}
and $T_{A,X}:=\prod_{a\in A} T_{x_a}$, where the univariate operator $T_x$ denotes the additive shift,
$$
T_x f(x):=f(x+1).
$$
Next, from \eqref{eq8.A} and \eqref{eq7.A1} we obtain the desired equality \eqref{eq8.E1}. 

\smallskip

\emph{Step 3: The difference operator $\wh D^\Jack_N(z;\kk)$.} This operator is obtained from the operator $\wt D_N(z;q,t)$ in exactly the same way. The only difference is that we are dealing with variables $u_i$ (which are replaced with $q^{-u_i}$) and use Lemma \ref{lemma7.B} instead of Lemma \ref{lemma7.A}. The resulting operator has the form  
\begin{equation*}
\wh D^\Jack_N(z;\kk)=\sum_A \wh C^\Jack_A T_{A,U}
\end{equation*}
where 
\begin{equation*}
\wh C^\Jack_A=\wh C^\Jack_A(u_1,\dots,u_N;\kk):=\prod_{a\in A}u_a\cdot\prod_{b\in A^c}(u_b+z) \cdot\prod_{a\in A}\prod_{b\in A^c} \frac{u_a-u_b+\kk}{u_a-u_b},
\end{equation*}
and $T_{A,U}:=\prod_{a\in A}T_{u_a}$. Next, from \eqref{eq8.B} and \eqref{eq7.A2} we obtain the desired equality \eqref{eq8.E2}. 

\smallskip

\emph{Step 4: Proof of identity \eqref{eq8.E3} {\rm(}beginning{\rm)}.}  This identity can be written as  
\begin{equation}\label{eq8.G}
\sum_A C^\Jack_A\frac{T^{-1}_{A,X}\wt\Pi^\Jack(\ccdot;\kk)}{\wt\Pi^\Jack(\ccdot;\kk)}
= \sum_A \wh C^\Jack_A\frac{T_{A,U}\wt\Pi^\Jack(\ccdot;\kk)}{\wt\Pi^\Jack(\ccdot;\kk)}.
\end{equation}
We are going to derive \eqref{eq8.G} from the relation \eqref{eq7.B}. We may rewrite \eqref{eq7.B} as
\begin{equation}\label{eq8.G1}
\sum_A C_A \frac{T_{q,A,X}\wt\Pi(\ccdot;q,t)}{\wt\Pi(\ccdot;q,t)}
= \sum_A \wh C_A\frac{T^{-1}_{q,A,U}\wt\Pi(\ccdot;q,t)}{\wt\Pi(\ccdot;q,t)},
\end{equation}
where $\wt\Pi(\ccdot;q,t)$ is defined by \eqref{eq7.Pi}.
Observe that
\begin{equation}\label{eq8.G2}
\frac{T_{q,A,X}\wt\Pi(\ccdot;q,t)}{\wt\Pi(\ccdot;q,t)}=\prod_{a\in A}\prod_{j=1}^N\frac{1-x_au_j^{-1}}{1-x_au_j^{-1}t}:=E_A
\end{equation}
and
\begin{equation}\label{eq8.G3}
\frac{T^{-1}_{q,A,U}\wt\Pi(\ccdot;q,t)}{\wt\Pi(\ccdot;q,t)}=\prod_{i=1}^N\prod_{a\in A}\frac{1-x_iu_a^{-1}}{1-x_i u_a^{-1}t} \cdot\prod_{a\in A}\frac{1-u_a^{-1}t^N}{1-u_a^{-1}}:=\wh E_A.
\end{equation}
In this notation, \eqref{eq8.G1} takes the form
\begin{equation}\label{eq8.G4}
\sum_A C_A E_A=\sum_A \wh C_A \wh E_A,
\end{equation}
which is an identity of rational functions in variables $x_i$'s, $u_j$'s, $z$, and $t$. 

We know that in our scaling limit regime,  $(1-q)^{-N}C_A$ and $(1-q)^{-N}\wh C_A$ tend to  $C^\Jack_A$ and $\wh C^\Jack_A$, respectively. Next, in the same limit regime, $E_A$ and $\wh E_A$ are transformed into
$$
\prod_{a\in A}\prod_{j=1}^N\frac{u_j-x_a}{u_j-x_a+\kk} \quad \text{and} \quad \prod_{i=1}^N\prod_{a\in A}\frac{u_a-x_i}{u_a-x_i+\kk}\cdot \prod_{a\in A}\frac{u_a+N\kk}{u_a},
$$
respectively.

Therefore, to complete the proof of \eqref{eq8.E3} it suffices to check the following two relations: for any $A\subseteq\{1,\dots,N\}$
\begin{equation}\label{eq8.G5}
\frac{T^{-1}_{A,X}\wt\Pi^\Jack(\ccdot;\kk)}{\wt\Pi^\Jack(\ccdot;\kk)}
=\prod_{a\in A}\prod_{j=1}^N\frac{u_j-x_a}{u_j-x_a+\kk}
\end{equation}
and
\begin{equation}\label{eq8.G6}
\frac{T_{A,U}\wt\Pi^\Jack(\ccdot;\kk)}{\wt\Pi^\Jack(\ccdot;\kk)}=\prod_{i=1}^N\prod_{a\in A}\frac{u_a-x_i}{u_a-x_i+\kk}\cdot \prod_{a\in A}\frac{u_a+N\kk}{u_a}.
\end{equation}

\emph{Step 5: Proof of identity \eqref{eq8.E3} {\rm(}end\/{\rm)}.} We proceed to the proof of \eqref{eq8.G5} and \eqref{eq8.G6}. Recall that $\wt\Pi^\Jack(\ccdot;\kk)$ is defined by \eqref{eq8.H}, so both \eqref{eq8.G5} and \eqref{eq8.G6} express some properties of the series $F(x,u;\kk)$. We claim that the following relations hold
\begin{gather}
T_x^{-1} F(x,u;\kk)=\frac{u-x}{u-x+\kk} F(x,u;\kk),   \label{eq8.H1}\\
T_u F(x,u;\kk)=\frac{(u-x)(u+\kk)}{(u-x+\kk)u}F(x,u;\kk). \label{eq8.H2}
\end{gather}
Observe that the desired relations \eqref{eq8.G5} and \eqref{eq8.G6} are readily deduced from \eqref{eq8.H1}, \eqref{eq8.H2}, and the definition \eqref{eq8.H}. Thus, it remains to check \eqref{eq8.H1} and  \eqref{eq8.H2}. 

As is seen from the definition \eqref{eq8.C3}, the formal series  $F(x,u;\kk)$ is nothing else than the Gauss hypergeometric series at the point $1$:
$$
F(x,u;\kk)={}_2F_1(\kk,-x;-u+1;1).
$$
We would like to apply the Gauss formula
\begin{equation*}
{}_2F_1(\kk,-x;-u+1;1)=\frac{\Ga(-u+1)\Ga(x-u+1-\kk)}{\Ga(x-u+1)\Ga(-u+1-\kk)};
\end{equation*}
then the relations \eqref{eq8.H1} and \eqref{eq8.H2} would immediately follow from it and the functional equation for the gamma function. 

A subtle point is that our formal series in $(\kk, x, u^{-1})$ absolutely converges  in the domain 
$$
\{(\kk, x,u): \Re u<\Re(x-\kk+1), \; u\notin\Z_{\ge1}\}\subset\C^3,
$$
which does not cover the set
$$
\{(\kk,x,u): |\kk|<\eps, \;|x|<\eps,\; |u|>\eps^{-1}\}\subset \C^3,
$$
no matter how small $\eps$ is. This does not allow us to apply the Gauss formula directly. 

However, one can overcome this obstacle in the following way. The series $F(x,u;\kk)$ lies in fact in the algebra $(\Q[x,\kk])[[u^{-1}]]$ (formal series in $u^{-1}$ with coefficients in the polynomial algebra $\Q[x,\kk]$). Consequently it suffices to prove the relations \eqref{eq8.H1} and \eqref{eq8.H2} under assumption that $x$ is specialized to $0,1,2,\dots$\,. But then the series terminates and represents a rational function, and then the use of Gauss' formula is justified. 

This completes the proof of the theorem. 
\end{proof}

\begin{remark}
From the proof of Theorem \ref{thm8.A} one can deduce the identity
\begin{multline}\label{eq8.Gamma}
\sum_{\mu\in\Y(N)}I_{\mu\mid N}(x_1,\dots,x_N;\kk)\wt H_{\mu\mid N}(u_1,\dots,u_N;\kk)\\
=\prod_{i=1}^N\prod_{j=1}^N\frac{\Ga(x_i-u_j-\kk+1)}{\Ga(x_i-u_j+1)}
\cdot\prod_{j=1}^N\frac{\Ga(-u_j+1)}{\Ga(-u_j-N\kk+1)},
\end{multline}
where, in contrast to Theorem \ref{thm8.A},  both sides are treated not as formal series but as functions of complex variables, while $\kk$ is supposed to be a positive real number; the series on the left absolutely converges provided that $\max_j(\Re u_j)\ll0$ (depending on $x_1,\dots,x_N$). Conversely, Theorem \ref{thm8.A} can be deduced from this identity. 

One can derive \eqref{eq8.Gamma} from the $(q,t)$ Cauchy identity \eqref{eq3.B} by a limit transition, which gives an alternate proof of Theorem \ref{thm8.A}. The scheme of proof is the following.

1. We take $q>1$ and $t=q^\kk$ with $\kk>0$, and show that the series on the left-hand side of \eqref{eq3.B} absolutely converges provided $\max_j |u_j|\ll 1$ (depending on $\max_i |x_i|$). 

2. On the right-hand side, we pass from $(\ccdot;q)_\infty$ to $(\ccdot;q^{-1})_\infty$ according to \eqref{inversion}. Then we rewrite the whole expression in terms of the $q^{-1}$-Gamma function $\Ga_{q^{-1}}(\ccdot)$ (recall that $q>1$ so that $0<q^{-1}<1$). 

3. We make the change of variables as in \eqref{eq8.A} and \eqref{eq8.B}, and we show that the summands on the left are nonnegative provided that $x_i$'s and $u_j$'s are real and such that $u_j\ll  x_i\ll 0$ for all $i$ and $j$. This enables us to pass to the limit as $q\to 1^+$.  Here we use the fact that $\Ga_q(z)\to \Ga(z)$ as $q\to1^-$. This leads us to \eqref{eq8.Gamma} for real arguments subject to constraints $u_j\ll  x_i\ll 0$.

4. We pass to complex values of arguments using analytic continuation. 

However, an accurate exposition of this argument is a bit tedious and would require no less space than the proof given above.  

Finally, note that without the second product on the right-hand side of \eqref{eq3.B} the limit transition would be impossible. Recall that this extra product arose thanks to the passage to modified dual functions (see the end of Section \ref{sect3}).

\end{remark}

\section{Specializations at $t=0$ and $q=0$}\label{sect9}

\subsection{The case $t=0$}

From the definition of the factors $\psi_T(q,t)$ attached to semistandard tableaux it follows that these factors can be specialized at $t=0$, so that the quantities $\psi_T(q,0)\in\Q(q)$ are well defined. This implies that the Macdonald polynomials $P_{\mu\mid N}(x_1,\dots,x_N;q,t)$ can be specialized at $t=0$, too. The resulting polynomials $P_{\mu\mid N}(x_1,\dots,x_N;q,0)$ over $\Q(q)$ are called the \emph{$q$-Whittaker polynomials}. 

The same holds for the interpolation Maconald polynomials, as is seen from the combinatorial formula \eqref{eq2.E}. We set 
$$
\A^\W_{\mu\mid N}(x_1,\dots,x_N;q):=I_{\mu\mid N}(x_1,\dots,x_N;q,0), \qquad \mu\in\Y(N). 
$$
Evidently, 
$$
\A^\W_{\mu\mid N}(x_1,\dots,x_N;q)=P_\mu(x_1,\dots,x_N;q,0)+\text{lower degree terms}.
$$ 
From \eqref{eq2.E}  we obtain 
$$
\A^\W_{\mu\mid N}(x_1,\dots,x_N;q)=\sum_{T\in\RTab(\mu,N)} \psi_T(q,0)\prod_{(i,j)\in\mu}(x_{T(i,j)}-q^{1-j}\eps_T(i,j)),
$$
where
$$
\eps_T(i,j)=\begin{cases}1, & \text{\rm if $i=1$ and $T(1,j)=1$,} \\ 0, & \text{\rm otherwise} \end{cases}.
$$

Likewise, the combinatorial formula for the modified dual functions (Corollary \ref{cor5.A}) shows that these functions also can be specialized at $t=0$. We set (below $K$ is a positive integer)
$$
\B^\W_{\mu\mid K}(u_1,\dots,u_K;q):=\wt H_{\mu\mid K}(u_1,\dots,u_K;q,0), \qquad \mu\in\Y(K). 
$$
These are rational functions over $\Q(q)$. The formula of Corollary \ref{cor5.A} specializes to
$$
\B^\W_{\mu\mid K}(u_1,\dots,u_K;q)=\sum_{T\in\RTab(\mu,K)}\prod_{i=1}^K \Wm_{\mu(i-1)/\mu(i)}(u_i;q),
$$
where, for a horizontal strip $\la/\ka$, 
$$
\Wm_{\la/\ka}(u;q):=\varphi_{\la/\ka}(q,0)\frac{u^{|\ka|-|\la|+\la_1}}{(u-q^{-1})\dots(u-q^{-\la_1})}.
$$

\begin{proposition}
The following Cauchy-type identity holds
\begin{multline}\label{eq9.A1}
\sum_{\mu:\, \ell(\mu)\le\min(N,K)}\A^W_{\mu\mid N}(x_1,\dots,x_N;q)\B^W_{\mu\mid K}(u_1,\dots,u_K;q)\\
=\prod_{i=1}^N\prod_{j=1}^K\frac1{(x_iu^{-1}_j;q)_\infty}\cdot\prod_{j=1}^K (u^{-1}_j;q)_\infty.
\end{multline}
Here both sides should be thought as elements of the $\Q(q)$-algebra of formal power series in $x_1,\dots,x_N, u^{-1}_1,\dots,u^{-1}_K$.
\end{proposition}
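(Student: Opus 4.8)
The plan is to deduce \eqref{eq9.A1} by specializing the truncated $(q,t)$-Cauchy identity \eqref{eq3.B} at $t=0$ and checking that this specialization is legitimate coefficient by coefficient.

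First I would handle the right-hand side of \eqref{eq3.B}. Under the convention of Remark \ref{Pochhammer}, in which $(z;q)_\infty$ is a power series in $z$ with coefficients in $\Q(q)$, setting $t=0$ gives $(x_iu_j^{-1}t;q)_\infty\to(0;q)_\infty=1$, and, since $t^N\to0$ for our fixed $N$, also $(u_j^{-1}t^N;q)_\infty\to1$. Hence the right-hand side of \eqref{eq3.B} specializes precisely to $\prod_{i=1}^N\prod_{j=1}^K\frac1{(x_iu_j^{-1};q)_\infty}\cdot\prod_{j=1}^K(u_j^{-1};q)_\infty$, which is the right-hand side of \eqref{eq9.A1}.

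Next I would identify the left-hand side. By the combinatorial formula \eqref{eq2.E}, together with the fact (recalled at the beginning of this section) that each $\psi_T(q,t)$ is regular at $t=0$, the polynomial $I_{\mu\mid N}(\ccdot;q,t)$ specializes to $\A^\W_{\mu\mid N}(\ccdot;q)$; indeed, each factor $x_{T(i,j)}-q^{1-j}t^{T(i,j)+i-2}$ becomes $x_{T(i,j)}-q^{1-j}\eps_T(i,j)$ because $t^{T(i,j)+i-2}$ vanishes at $t=0$ unless $T(i,j)+i-2=0$, i.e.\ unless $i=1$ and $T(1,j)=1$. Likewise, by Corollary \ref{cor5.A} and the regularity of $\varphi_{\mu/\nu}(q,t)$ at $t=0$, the modified dual function $\Idm_{\mu\mid K}(\ccdot;q,t)$ specializes to $\B^\W_{\mu\mid K}(\ccdot;q)$: in the numerator of \eqref{eq5.B} each factor $u-q^{-\nu_i+j-1}t^i$ becomes $u$ since $i\ge1$, leaving the monomial $u^{|\nu|-|\mu|+\mu_1}$, which is exactly the numerator in the displayed formula for $\Wm_{\la/\ka}(u;q)$.

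The last point to address — and the only place requiring any care — is that the substitution $t=0$ commutes with the summation over $\mu$. Both sides of \eqref{eq3.B} are elements of the $\Q(q,t)$-algebra of formal power series in $x_1,\dots,x_N,u_1^{-1},\dots,u_K^{-1}$; since $\A^\W_{\mu\mid N}$ is a polynomial in the $x$'s of degree $|\mu|$ while $\B^\W_{\mu\mid K}$ is a power series in the $u_j^{-1}$ whose lowest degree is $|\mu|$, any fixed monomial in $x$ and $u^{-1}$ receives contributions from only finitely many $\mu$. Therefore one may substitute $t=0$ coefficient by coefficient, and combining the three observations above yields \eqref{eq9.A1}. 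I expect no genuine obstacle here; the only mild technical content is the uniform regularity at $t=0$ of the coefficients $\psi_T(q,t)$ and $\varphi_{\mu/\nu}(q,t)$, which follows from Macdonald's explicit product formulas \cite[Ch.~VI, (6.24)]{M}.
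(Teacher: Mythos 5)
Your proof is correct and takes essentially the same route as the paper, whose own proof of this proposition is just the one-line remark that it is a direct consequence of \eqref{eq3.B}; you have simply made explicit the three facts the paper leaves to the reader (that the right-hand side of \eqref{eq3.B} specializes as claimed, that the combinatorial formulas show $I_{\mu\mid N}$ and $\Idm_{\mu\mid K}$ specialize to $\A^\W_{\mu\mid N}$ and $\B^\W_{\mu\mid K}$ at $t=0$, and that the specialization can be performed coefficient by coefficient because the lowest $u^{-1}$-degree of $\Idm_{\mu\mid K}$ is $|\mu|$, so only finitely many $\mu$ contribute to a given monomial).
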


\begin{proof}
This is  as a direct consequence of  \eqref{eq3.B}.
\end{proof}

Note that the polynomials $\A^W_{\mu\mid N}(x_1,\dots,x_N;q)$ are stable in the conventional sense, which makes it possible to define symmetric functions  $\A^W_{\mu}(x_1,x_2,\dots;q)$ over the base field $\Q(q)$. The identity \eqref{eq9.A1} can be extended to the case of infinitely many variables.

\subsection{The case $q=0$}

\begin{lemma}\label{lemma9.B1}
Let $N$ be a positive integer and $\mu\in\Y(N)$.

{\rm(i)}The polynomials 
$$
\A^\HL_{\mu\mid N}(x_1,\dots,x_N;t):=I_{\mu\mid N}(x_1,\dots,x_N;q^{-1},t^{-1})\big|_{q=0}
$$
are well defined as elements of the algebra\/ $\Sym(N)\otimes_\Q\Q(t)$.

{\rm(ii)} The top degree homogeneous component of the polynomial $\A^\HL_{\mu\mid N}(x_1,\dots,x_N;t)$ is the Hall--Littlewood polynomial $P_\mu(x_1,\dots,x_N;0,t)$. 

{\rm(iii)} The following combinatorial formula holds{\rm:}
$$
\A^\HL_\mu(x_1,\dots,x_N;t)=\sum_{T\in\RTab(\mu,N)} \psi_T(0,t)\prod_{(i,j)\in\mu}(x_{T(i,j)}-t^{2-T(i,j)-i}\one_{j=1}),
$$
where $\one_{j=1}$ equals $1$ if $j=1$, and $0$ otherwise. 
\end{lemma}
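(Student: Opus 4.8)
The plan is to reduce everything to the combinatorial formula of Proposition~\ref{prop2.H}. Identity \eqref{eq2.E} holds in $\Sym_\FF(N)$ over $\FF=\Q(q,t)$, and $(q,t)\mapsto(q^{-1},t^{-1})$ is a field automorphism of $\FF$; so substituting $q\to q^{-1}$, $t\to t^{-1}$ into \eqref{eq2.E} and invoking the symmetry $\psi_T(q^{-1},t^{-1})=\psi_T(q,t)$ recalled in the proof of Proposition~\ref{prop2.H} gives
\begin{equation*}
I_{\mu\mid N}(x_1,\dots,x_N;q^{-1},t^{-1})=\sum_{T\in\RTab(\mu,N)}\psi_T(q,t)\prod_{(i,j)\in\mu}\bigl(x_{T(i,j)}-q^{\,j-1}t^{\,2-T(i,j)-i}\bigr).
\end{equation*}
The first point to check is that each $\psi_T(q,t)$ is regular at $q=0$. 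This follows from the explicit product expression in Macdonald~\cite[Ch.~VI, (6.24)]{M}: $\psi_{\la/\ka}(q,t)$ is a product of ratios $b_\la(s;q,t)/b_\ka(s;q,t)$ with $b_\la(s;q,t)=(1-q^{a(s)}t^{l(s)+1})/(1-q^{a(s)+1}t^{l(s)})$, and each such factor is a rational function of $q$ whose denominator tends to $1$ at $q=0$, so $\psi_T(0,t)$ is a well-defined element of $\Q(t)$ (this is precisely the Hall--Littlewood specialization, cf.\ Macdonald~\cite[Ch.~III]{M}). Here it is essential that $t$ is kept as a free indeterminate. Since every coefficient (in $x_1,\dots,x_N$) of the polynomial on the left is a $\Q(t)[q]$-combination of the $\psi_T(q,t)$'s, it is regular at $q=0$, which proves (i). Setting $q=0$ in the displayed formula and noting that $q^{\,j-1}\big|_{q=0}=\one_{j=1}$ gives the combinatorial formula of (iii) at once.

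For (ii), extract the top-degree-in-$x$ homogeneous component. Because $q\mapsto 0$ and projection onto a fixed homogeneous component are both coefficient-wise operations on a polynomial whose coefficients are regular at $q=0$, they commute; hence the top component of $\A^\HL_{\mu\mid N}(\ccdot;t)$ is the $q=0$ specialization of the top component of $I_{\mu\mid N}(\ccdot;q^{-1},t^{-1})$. By Proposition~\ref{prop2.D}(i) and the normalization \eqref{eq2.A}, the top component of $I_{\mu\mid N}(\ccdot;q,t)$ equals $P_{\mu\mid N}(\ccdot;q,t)$; applying the parameter automorphism $(q,t)\mapsto(q^{-1},t^{-1})$ and again $\psi_T(q^{-1},t^{-1})=\psi_T(q,t)$, the top component of $I_{\mu\mid N}(\ccdot;q^{-1},t^{-1})$ equals $\sum_{T\in\RTab(\mu,N)}\psi_T(q,t)\prod_{(i,j)\in\mu}x_{T(i,j)}$, i.e.\ the same polynomial. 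Putting $q=0$ turns this into $\sum_{T}\psi_T(0,t)\prod_{(i,j)\in\mu}x_{T(i,j)}=P_{\mu\mid N}(x_1,\dots,x_N;0,t)$, the Hall--Littlewood polynomial; equivalently one may use the symmetry $P_\mu(\ccdot;q^{-1},t^{-1})=P_\mu(\ccdot;q,t)$ and then specialize at $q=0$.

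There is no real obstacle: the lemma is essentially a direct corollary of Proposition~\ref{prop2.H}. The only steps demanding a little care are (a) verifying that the denominators occurring in $\psi_T(q,t)$ do not vanish at $q=0$, so that the specialization is legitimate, and (b) tracking that $t$ remains a formal parameter throughout, so that all the objects produced land in $\Sym(N)\otimes_\Q\Q(t)$ rather than in a ring where $q$ and $t$ are coupled. Both are routine given the combinatorial formula already established and the Hall--Littlewood material in Macdonald~\cite[Ch.~III and VI]{M}.
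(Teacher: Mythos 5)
Your proof is correct and follows essentially the same route as the paper: substitute $(q,t)\mapsto(q^{-1},t^{-1})$ into the combinatorial formula \eqref{eq2.E}, use the invariance $\psi_T(q^{-1},t^{-1})=\psi_T(q,t)$ (equivalently $P_{\mu\mid N}(\ccdot;q^{-1},t^{-1})=P_{\mu\mid N}(\ccdot;q,t)$), and then set $q=0$, observing $q^{\,j-1}\big|_{q=0}=\one_{j=1}$. One minor imprecision in your verification of (i): the cleared denominator of the factor $b_\mu(s)/b_\lambda(s)$ is $(1-q^{a_\mu(s)+1}t^{l_\mu(s)})(1-q^{a_\lambda(s)}t^{l_\lambda(s)+1})$, and when $a_\lambda(s)=0$ the second factor tends to $1-t^{l_\lambda(s)+1}$, not to $1$; but this is still a nonzero element of $\Q(t)$ since $t$ is a free indeterminate, so regularity at $q=0$ (which is all you need) does hold.
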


Note that this formula is very close to the combinatorial formula for the Hall--Littlewood polynomials, the only difference comes from the boxes of the first column of the diagram $\mu$. 

\begin{proof}
As is well known, the Macdonald polynomials $P_{\mu\mid N}(x_1,\dots,N;q,t)$ are invariant under the change $(q,t)\to(q^{-1},t^{-1})$ (see \cite[ch. VI, (4.14) (iv)]{M}). This implies that $\psi_T(q^{1},t^{-1})=\psi_T(q,t)$, which can also be deduced from the explicit formula \cite[ch. VI, (6.24) (ii)]{M}. 
It follows, in particular, that 
$$
P_{\mu\mid N}(x_1,\dots,x_N;q^{-1},t^{-1})\big|_{q=0}=P_{\mu\mid N}(x_1,\dots,x_N;0,t).
$$

Using these facts and applying the combinatorial formula \eqref{eq2.E} one immediately obtains all the claims of the lemma. 
\end{proof}

\begin{lemma}\label{lemma9.B2}
Let $K$ be a positive integer and $\mu\in\Y(K)$. 

{\rm(i)} The specialization  
$$
\B^\HL_{\mu\mid K}(y_1,\dots,y_N;t):= \Idm_{\mu\mid K}(u_1t^{-1}q,\dots,u_K t^{-1}q;q^{-1},t^{-1})\big|_{q=0}
$$
makes sense and gives symmetric rational functions over\/ $\Q(t)$.  

{\rm(ii)} One has 
$$
\B^\HL_{\mu\mid K}(u_1,\dots,u_N;t)=Q_{\mu\mid K}(u^{-1}_1,\dots,u^{-1}_K;t) +\text{\rm higher degree terms in $u^{-1}_1,\dots,u^{-1}_K$},
$$
where the rational function on the left is treated as an element of\/ $\Q(t)[[u^{-1}_1,\dots,u^{-1}_K]]$ and $Q_{\mu\mid K}(\ccdot;t)$ is the dual Hall--Littlewood polynomial. 

{\rm(iii)} The rational functions $\B^\HL_{\mu\mid K}(y_1,\dots,y_N;t)$ are given by the combinatorial formula resulting from the branching rule 
$$
\B^\HL_{\mu\mid K}(u_1,\dots,u_K;t)=\sum_{\nu:\, \nu\prec\mu} W_{\mu/\nu}(u_1;t)\B^\HL_{\nu\mid K-1}(u_2,\dots,u_K;t),
$$
where 
$$
W_{\mu/\nu}(u;t):=\begin{cases} \varphi_{\mu/\nu}(0,t)\,\dfrac{u^{|\nu|-|\mu|+1}}{u-t}, & \ell(\nu)<\ell(\mu), \\
\varphi_{\mu/\nu}(0,t)\,\dfrac{u-t^{1-\ell}}{u-t} u^{|\nu|-|\mu|}, & \ell(\nu)=\ell(\mu):=\ell.
\end{cases}
$$
\end{lemma}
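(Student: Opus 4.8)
The plan is to reduce the whole statement to the combinatorial formula of Corollary \ref{cor5.A} by a factor-by-factor $q\to0$ limit. Writing $T\in\RTab(\mu,K)$ as a chain $\mu=\mu(0)\succ\mu(1)\succ\dots\succ\mu(K)=\varnothing$, formula \eqref{combJ} after the substitutions $(q,t)\to(q^{-1},t^{-1})$, $u_i\to u_it^{-1}q$ reads
\[
\Idm_{\mu\mid K}(u_1t^{-1}q,\dots,u_Kt^{-1}q;q^{-1},t^{-1})=\sum_{T\in\RTab(\mu,K)}\prod_{i=1}^K\Wm_{\mu(i-1)/\mu(i)}\bigl(u_it^{-1}q;q^{-1},t^{-1}\bigr),
\]
so it is enough to show that for every horizontal strip $\mu\succ\nu$ the single factor $\Wm_{\mu/\nu}(ut^{-1}q;q^{-1},t^{-1})$ tends, as $q\to0$, to the function $W_{\mu/\nu}(u;t)$ of part (iii). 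Granting this, the sum being finite gives (i); peeling off the first factor $\Wm_{\mu/\mu(1)}$ and recognizing that the remaining sum over chains starting at $\mu(1)$ is $\B^\HL_{\mu(1)\mid K-1}$ in the variables $u_2,\dots,u_K$ gives the branching rule of (iii); and (ii) will drop out by inspecting leading terms.

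For the one-factor limit I would substitute directly into \eqref{eq5.B}. In the numerator $\prod_{i=1}^{\ell(\mu)}\prod_{j=1}^{\nu_i-\mu_{i+1}}(ut^{-1}q-q^{\,\nu_i-j+1}t^{-i})$ each binomial is divisible by exactly $q$ (the exponent $\nu_i-j+1$ runs from $\nu_i$ down to $\mu_{i+1}+1\ge1$, so both terms have $q$-valuation $\ge1$, equal to $1$), and pulling out $q^{|\nu|-|\mu|+\mu_1}$ altogether, every surviving factor tends to $ut^{-1}$ except the one with $j=\nu_i$ in the last row $i=\ell(\mu)$ --- which is present precisely when $\mu_{\ell(\mu)+1}=0$ and $\nu_{\ell(\mu)}\ge1$, i.e. precisely when $\ell(\nu)=\ell(\mu)=:\ell$ --- and which tends to $ut^{-1}-t^{-\ell}$. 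Similarly the denominator $\prod_{k=1}^{\mu_1}(ut^{-1}q-q^k)$ equals $q^{\mu_1}$ times $(ut^{-1}-1)$ and $\mu_1-1$ factors tending to $ut^{-1}$. Collecting the powers of $q$ and $t$ yields
\[
\Wm_{\mu/\nu}(ut^{-1}q;q^{-1},t^{-1})=q^{|\nu|-|\mu|}\,\varphi_{\mu/\nu}(q^{-1},t^{-1})\,\bigl(t^{|\mu|-|\nu|}\,\omega_{\mu/\nu}(u)+O(q)\bigr),
\]
where $\omega_{\mu/\nu}(u)=u^{|\nu|-|\mu|+1}/(u-t)$ if $\ell(\nu)<\ell(\mu)$, and $\omega_{\mu/\nu}(u)=(u-t^{1-\ell})u^{|\nu|-|\mu|}/(u-t)$ if $\ell(\nu)=\ell(\mu)=\ell$.

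The remaining prefactor is controlled by the identity $\varphi_{\mu/\nu}(q^{-1},t^{-1})=(q/t)^{|\mu|-|\nu|}\varphi_{\mu/\nu}(q,t)$, which follows from the invariance $P_\mu(\ccdot;q^{-1},t^{-1})=P_\mu(\ccdot;q,t)$ (Macdonald \cite[ch. VI, (4.14)(iv)]{M}, as already used in Lemma \ref{lemma9.B1}), the relation $b_\mu(q^{-1},t^{-1})=(q/t)^{|\mu|}b_\mu(q,t)$ for the proportionality constants $b_\mu=Q_\mu/P_\mu$, and $\varphi_{\mu/\nu}=\psi_{\mu/\nu}\,b_\mu/b_\nu$. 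Since $\varphi_{\mu/\nu}(q,t)$ is regular at $q=0$ (again observed in the proof of Lemma \ref{lemma9.B1}), one gets $q^{|\nu|-|\mu|}t^{|\mu|-|\nu|}\varphi_{\mu/\nu}(q^{-1},t^{-1})=\varphi_{\mu/\nu}(q,t)\to\varphi_{\mu/\nu}(0,t)$, so the one-factor limit is $\varphi_{\mu/\nu}(0,t)\,\omega_{\mu/\nu}(u)=W_{\mu/\nu}(u;t)$, which proves (i) and (iii). For (ii), expand the branching formula of (iii) in $u_1^{-1},\dots,u_K^{-1}$: each $W_{\mu/\nu}(u;t)$ has leading term $\varphi_{\mu/\nu}(0,t)\,u^{-(|\mu|-|\nu|)}$ (the rational prefactor $\tfrac{u}{u-t}$, resp. $\tfrac{u-t^{1-\ell}}{u-t}$, tends to $1$ at infinity), so the lowest-degree part of $\sum_T\prod_i W_{\mu(i-1)/\mu(i)}(u_i;t)$ is $\sum_{T\in\RTab(\mu,K)}\varphi_T(0,t)\prod_{(a,b)\in\mu}u_{T(a,b)}^{-1}$, which is exactly the combinatorial formula for the dual Hall--Littlewood polynomial $Q_{\mu\mid K}(u_1^{-1},\dots,u_K^{-1};t)$, all other terms being of strictly higher degree. (Alternatively, (ii) follows from $\Idm_{\mu\mid K}(\ccdot;q^{-1},t^{-1})=Q_{\mu\mid K}((\ccdot)^{-1};q^{-1},t^{-1})+(\text{higher degree})$ together with $Q_\mu(\ccdot;q^{-1},t^{-1})=(q/t)^{|\mu|}Q_\mu(\ccdot;q,t)$, under which the scaling $u\to ut^{-1}q$ precisely cancels the factor $(q/t)^{|\mu|}$ and leaves $Q_{\mu\mid K}(u^{-1};q,t)\to Q_{\mu\mid K}(u^{-1};0,t)$.)

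The step I expect to require the most care is the $q$-exponent bookkeeping in the second paragraph: verifying that every binomial factor contributes exactly one power of $q$, that the unique numerator factor surviving as a genuine binomial in $u$ (rather than a monomial) is the last-row one and that it appears exactly when $\ell(\nu)=\ell(\mu)$, and then checking that the accumulated powers of $t$ reassemble verbatim into the two cases of $W_{\mu/\nu}(u;t)$. The auxiliary identity $\varphi_{\mu/\nu}(q^{-1},t^{-1})=(q/t)^{|\mu|-|\nu|}\varphi_{\mu/\nu}(q,t)$ is the other point needing verification, but it is routine from Macdonald's explicit formulas \cite[ch. VI, (6.24)]{M}.
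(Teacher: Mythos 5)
Your proof is correct and follows essentially the same route as the paper: reduce to the one-strip weight via the combinatorial formula \eqref{combJ}, substitute $(q,t)\to(q^{-1},t^{-1})$ and $u\to ut^{-1}q$, and track powers of $q$ and $t$ factor by factor, with the identity $\varphi_{\mu/\nu}(q^{-1},t^{-1})=(q/t)^{|\mu|-|\nu|}\varphi_{\mu/\nu}(q,t)$ absorbing the leftover $(q/t)^{|\nu|-|\mu|}$ prefactor. The only differences are cosmetic --- the paper applies the $\varphi$-identity and reindexes the product \emph{before} setting $q=0$, whereas you substitute first and bookkeep afterward, and you derive the $\varphi$-identity from $\psi$-invariance and $b_\mu$ rather than directly from Macdonald's (6.24)(i); you also spell out part (ii), which the paper leaves implicit.
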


\begin{proof}
All the claims follow from the examination of the combinatorial formula for the functions $\Idm_{\mu\mid K}(u_1,\dots,u_K;q,t)$, see Corollary \ref{cor5.A}. By \eqref{eq5.B}, the weight attached to a horizontal strip $\mu/\nu$ is 
\begin{multline*}
\Wm_{\mu/\nu}(u;q,t):=\varphi_{\mu/\nu}(q,t)\dfrac{\prod\limits_{i=1}^{\ell(\mu)}\prod\limits_{j=1}^{\nu_i-\mu_{i+1}}(u-q^{-\nu_i+j-1}t^i)}{(u-q^{-1})\dots(u-q^{-\mu_1})}\\
=\varphi_{\mu/\nu}(q,t)\dfrac{\prod\limits_{i=1}^{\ell(\mu)}\prod\limits_{m=\mu_{i+1}+1}^{\nu_i}(u-q^{-m} t^i)}{(u-q^{-1})\dots(u-q^{-\mu_1})},
\end{multline*}
with the understanding that the interior product over $m$ equals $1$ if $\nu_i=\mu_{i+1}$.

Note that 
$$
\varphi_{\mu/\nu}(q^{-1},t^{-1})=\left(\frac qt\right)^{|\mu|-|\nu|}\varphi_{\mu/\nu}(q,t),
$$
as it seen from \cite[ch. VI, (6.24) (i)]{M}. Using this we obtain
$$
\Wm_{\mu/\nu}(uq t^{-1};q^{-1},t^{-1}):=\varphi_{\mu/\nu}(q,t)\dfrac{\prod\limits_{i=1}^{\ell(\mu)}\prod\limits_{m=\mu_{i+1}}^{\nu_i-1}(u-q^{m} t^{1-i})}{(u-t)(u-tq)\dots(u-tq^{\mu_1-1})}.
$$
Here we assume $\mu_1\ge1$, otherwise there is nothing to prove. 

Now it is clear that the resulting expression may be specialized at $q=0$. The denominator turns into $(u-t)u^{\mu_1-1}$. In the numerator, after setting $q=0$, each factor becomes equal to $u$ unless $\mu_{i+1}=0$ and $\nu_i>\mu_{i+1}$. These two conditions just mean that $i=\ell:=\ell(\mu)$ and $\ell(\nu)=\ell$, and then the only exceptional factor is $(u-t^{1-\ell})$. 

This gives the desired weight $W_{\mu/\nu}(u;t)$ and completes the proof. 
\end{proof}

\begin{proposition}\label{prop9.B}  
\emph{The following Cauchy--type identity holds}
\begin{multline}\label{limitCauchy}
\sum_{\mu: \,\ell(\mu)\le \min(N,K)}\A^\HL_{\mu\mid N}(x_1,\dots,x_N;t) \B^\HL_{\mu\mid K}(u_1,\dots,u_K;t)\\
=\prod_{i=1}^N\prod_{j=1}^K\frac{u_j-x_it}{u_j-x_i}\cdot\prod_{j=1}^K\frac{u_j-t^{1-N}}{u_j-t}.
\end{multline}
\end{proposition}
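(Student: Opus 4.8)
The plan is to obtain \eqref{limitCauchy} as a degeneration of the truncated Cauchy identity \eqref{eq3.B}, using exactly the substitution that enters the definitions of $\A^\HL$ and $\B^\HL$ in Lemmas~\ref{lemma9.B1} and~\ref{lemma9.B2}. In \eqref{eq3.B} I would first replace $(q,t)$ by $(q^{-1},t^{-1})$ and then substitute $u_j\mapsto qt^{-1}u_j$ for $j=1,\dots,K$, i.e. $u_j^{-1}\mapsto q^{-1}t\,u_j^{-1}$. After this substitution the left-hand side reads
\[
\sum_{\mu:\,\ell(\mu)\le\min(N,K)} I_{\mu\mid N}(x_1,\dots,x_N;q^{-1},t^{-1})\,\Idm_{\mu\mid K}(qt^{-1}u_1,\dots,qt^{-1}u_K;q^{-1},t^{-1}),
\]
and by the very definitions its $\mu$-th term is regular at $q=0$ and specializes there to $\A^\HL_{\mu\mid N}(x_1,\dots,x_N;t)\,\B^\HL_{\mu\mid K}(u_1,\dots,u_K;t)$; regularity at $q=0$ is precisely what Lemmas~\ref{lemma9.B1} and~\ref{lemma9.B2} provide.

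On the right-hand side the same substitution turns the first product into $\prod_{i,j}\frac{(x_iq^{-1}u_j^{-1};q^{-1})_\infty}{(x_iq^{-1}tu_j^{-1};q^{-1})_\infty}$ and the second into $\prod_j\frac{(tq^{-1}u_j^{-1};q^{-1})_\infty}{(t^{1-N}q^{-1}u_j^{-1};q^{-1})_\infty}$. I would then apply the inversion identity \eqref{inversion}, $(z;q^{-1})_\infty=(zq;q)_\infty^{-1}$, to each of the four Pochhammer factors; in every case $zq$ is a monomial in the $x_i$ and $u_j^{-1}$ times a (Laurent) power of $t$ carrying no remaining $q$, so the right-hand side becomes
\[
\prod_{i=1}^N\prod_{j=1}^K\frac{(x_iu_j^{-1}t;q)_\infty}{(x_iu_j^{-1};q)_\infty}\cdot\prod_{j=1}^K\frac{(u_j^{-1}t^{1-N};q)_\infty}{(u_j^{-1}t;q)_\infty},
\]
which is a formal power series in the $x_i,u_j^{-1}$ whose coefficients are regular at $q=0$. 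Setting $q=0$ and using $(z;q)_\infty\big|_{q=0}=1-z$ gives $\prod_{i,j}\frac{1-x_iu_j^{-1}t}{1-x_iu_j^{-1}}\cdot\prod_j\frac{1-u_j^{-1}t^{1-N}}{1-u_j^{-1}t}$, which after clearing powers of $u_j$ from numerators and denominators is exactly the right-hand side of \eqref{limitCauchy}.

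The one point that needs care is the legitimacy of performing the $q=0$ specialization termwise under the infinite sum. Here I would argue at the level of formal power series in $x_1,\dots,x_N,u_1^{-1},\dots,u_K^{-1}$ over $\Q(t)$: since $I_{\mu\mid N}(\ccdot;q^{-1},t^{-1})$ has $x$-degree at most $|\mu|$ while every monomial of $\Idm_{\mu\mid K}$ has $u^{-1}$-degree at least $|\mu|$, the coefficient of a fixed monomial $x^{\alpha}u^{-\beta}$ on the left is a finite sum, over the $\mu$ with $|\alpha|\le|\mu|\le|\beta|$. Thus, after the manipulations above, \eqref{eq3.B} is an identity of formal power series whose coefficients are finite $\FF$-linear combinations lying — on the left by Lemmas~\ref{lemma9.B1} and~\ref{lemma9.B2}, on the right by the rewritten product form — in the subring of $\FF$ regular at $q=0$, so the evaluation $q=0$ applies coefficientwise and yields \eqref{limitCauchy}. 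The genuinely delicate step, the compatibility of the two degenerations $u_j\mapsto qt^{-1}u_j$ and $q\to 0$ on the dual side, is already settled by Lemma~\ref{lemma9.B2}; beyond this, only the elementary Pochhammer bookkeeping indicated above is required.
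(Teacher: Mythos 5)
Your proposal is correct and follows essentially the same route as the paper: replace $(q,t)$ by $(q^{-1},t^{-1})$ in \eqref{eq3.B}, substitute $u_j\mapsto qt^{-1}u_j$, apply \eqref{inversion}, and set $q=0$, invoking Lemmas~\ref{lemma9.B1} and~\ref{lemma9.B2} for the left-hand side. The only difference is that you supply an explicit degree-counting argument for the legitimacy of the coefficientwise specialization at $q=0$, a point the paper leaves implicit.
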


\begin{proof}
By virtue of \eqref{eq3.B} and \eqref{inversion} 
\begin{multline*}
\sum_{\mu: \,\ell(\mu)\le \min(N,K)}I_{\mu\mid N}(x_1,\dots,x_N;q^{-1},t^{-1})\Idm_{\mu\mid K}(u_1,\dots,u_K;q^{-1},t^{-1})\\
=\prod_{i=1}^N\prod_{j=1}^K\frac{(x_iu^{-1}_jt^{-1};q^{-1})_\infty}{(x_iu^{-1}_j;q^{-1})_\infty}\cdot\prod_{j=1}^K\frac{(u^{-1}_j;q^{-1})_\infty}{(u^{-1}_jt^{-N};q^{-1})_\infty}\\
=\prod_{i=1}^N\prod_{j=1}^K\frac{(x_iu^{-1}_j q;q)_\infty}{(x_iu^{-1}_j t^{-1}q;q)_\infty}\cdot\prod_{j=1}^K\frac{(u^{-1}_j t^{-N}q;q)_\infty}{(u^{-1}_j q;q)_\infty}.
\end{multline*}
Next, replacing each $u_j$ with $u_jqt^{-1}$ we obtain
\begin{multline*}
\sum_{\mu: \,\ell(\mu)\le \min(N,K)}I_{\mu\mid N}(x_1,\dots,x_N;q^{-1},t^{-1})\Idm_{\mu\mid K}(u_1qt^{-1},\dots,u_K qt^{-1};q^{-1},t^{-1})\\
=\prod_{i=1}^N\prod_{j=1}^K\frac{(x_iu^{-1}_j t;q)_\infty}{(x_iu^{-1}_j;q)_\infty}\cdot\prod_{j=1}^K\frac{(u^{-1}_j t^{1-N};q)_\infty}{(u^{-1}_j t;q)_\infty}.
\end{multline*}
Finally, this identity may be specialized at $q=0$. For the left-hand side we use Lemmas \ref{lemma9.B1} and \ref{lemma9.B2}, and this gives the left-hand side of \eqref{limitCauchy}. For the right-hand side this directly gives the right-hand side of \eqref{limitCauchy}.
\end{proof}

Note that the $\B^\HL$-functions are stable,
$$
\B^\HL_{\mu\mid K}(y_1,\dots,y_{K-1}, 0;t)=\B^\HL_{\mu\mid K-1}(y_1,\dots,y_{K-1};t),
$$
while the $\A^\HL$-polynomials are only quasi-stable in the sense that
$$
\A^HL_{\mu\mid N}(x_1,\dots,y_{N-1}, t^{1-N};t)=\A^\HL_{\mu\mid N-1}(x_1,\dots,x_{N-1};t).
$$

This agrees with the structure of \eqref{limitCauchy}. The quasi-stability reflects in the fact that detaching one variable in the $\A^\HL$-polynomial requires a shift of the remaining variables:
$$
\A^\HL_{\mu\mid N}(x_1,\dots,x_N;t)=\sum_{\nu:\, \nu\prec\mu} (\text{a weight in variable $x_1$})\cdot \A^\HL_{\nu\mid N-1}(x_2t^{-1},\dots,x_N t^{-1};t).
$$


\begin{thebibliography}{AA}

\bibitem{BL}
L. C. Biedenharn and J. D. Louck, A new class of 	symmetric polynomials defined in terms of tableaux. Adv. Appl. Math. 10 (1989), 396--438.

\bibitem{BL-PNAS}
L. C. Biedenharn and J. D. Louck, Inhomogeneous basis set of symmetric polynomials defined by tableaux. Proc. Natl. Acad. Sci. USA 87 (1990), 1441--1445. 

\bibitem{B-AM}
A. Borodin, On a family of symmetric rational functions. Adv. Math. 306 (2017), 973--1018.

\bibitem{BO-AM}
A. Borodin and G. Olshanski, The boundary of the Gelfand--Tsetlin graph: A new approach. Adv. Math. 230 (2012), 1738--1779.

\bibitem{BP-SM}
A. Borodin and L. Petrov, Higher spin six vertex model and symmetric rational functions.  Selecta Math. 24 (2018), no. 2, 751--874. 

\bibitem{BP-Lect}
A. Borodin and L. Petrov, Lectures on integrable probability: Stochastic vertex models and symmetric functions. In: Stochastic Processes and Random Matrices. Lecture Notes of the Les Houches Summer School, vol. 104, pp. 26--131. Oxford Univ. Press, 2017;  arXiv:1605.01349. 

\bibitem{Bump}
D. Bump, P.  McNamara, M.  Nakasuji, Factorial Schur functions and the Yang-Baxter equation. 
Comment. Math. Univ. St. Pauli 63 (2014), no. 1-2, 23--45. 

\bibitem{CL}
W. Y. C. Chen and J. D. Louck, The factorial Schur function. J. Math. Phys. 34 (1993), 4144--4160.

\bibitem{Cuenca}
C. Cuenca, Interpolation Macdonald operators at infinity. Adv. Appl. Math. 101 (2018), 15--59.

\bibitem{GH}
I. P. Goulden and A. M. Hamel, Shift operators and factorial symmetric functions. J. Comb. Theory A 69 (1995), 51--60. 

\bibitem{Knop}
F. Knop, Symmetric and non-symmetric quantum Capelli polynomials. Comment. Math. Helv. 72 (1997), 84--100.

\bibitem{KnopSahi}
F. Knop and S. Sahi, Difference equations and symmetric polynomials defined by their
zeros, Internat. Math. Res. Notices (1996), no. 10, 473--486.

\bibitem{M-SLC}
I. G. Macdonald, Schur functions: theme and variations. S\'eminaire Lotharingien de Combinatoire 28 (1992), paper B28a, 35 pp. 

\bibitem{M}
I. G. Macdonald, Symmetric functions and Hall polynomials, 2nd ed. Oxford Univ. Press, 1995.

\bibitem{Molev}
A. I. Molev, Comultiplication rules for the double Schur functions and Cauchy identities. Electr. J. Comb. 16 (2009), paper R13, 44 pp. 

\bibitem{NS-2014}
M. Nazarov and E. Sklyanin, Macdonald operators at infinity. J.  Algebraic
Comb. 40 (2014), 23--44.

\bibitem{Ok-TG}
A. Okounkov, Quantum immanants and higher Capelli identities. Transf. Groups  1 (1996), no. 1-2, 99--126.

\bibitem{Ok-MRL}
A. Okounkov, Binomial formula for Macdonald polynomials and applications. Math. Research Lett. 4 (1997), 533--553.  

\bibitem{Ok-CM}
A. Okounkov, (Shifted) Macdonald polynomials: $q$-Integral representation and combinatorial formula. Comp. Math. 112 (1998), 147--182.

\bibitem{Ok-AAM}
A. Okounkov,  On Newton interpolation of symmetric functions: A characterization of interpolation Macdonald polynomials. Adv. Appl. Math. 20 (1998), 395--428.

\bibitem{Ok-FAA}
A. Yu. Okounkov, A remark on the Fourier pairing and the binomial formula for the Macdonald polynomials. Funct. Anal. Appl. 36 (2002), no. 2, 134--139. 

\bibitem{Ok-TG-1998}
A. Okounkov, $BC$-type interpolation Macdonald polynomials and binomial formula for Koornwinder polynomials. Transf. Groups 3 (1998), 181--207.

\bibitem{OO-AA}
A. Okounkov and G. Olshanski, Shifted Schur functions. Algebra i Analiz 9
(1997), no. 2, 73--146 (Russian); English version: St. Petersburg Mathematical
J., 9 (1998), 239--300.

\bibitem{OO-IMRN}
A. Okounkov and G. Olshanski, Asymptotics of Jack polynomials as the number
of variables goes to infinity, Intern. Math. Research Notices 1998, no. 13,
641--682.

\bibitem{Ols-Notes}
G. Olshanski, unpublished handwritten notes (2000).

\bibitem{Ols-2017}
G. Olshanski, Analogue of big $q$-Jacobi polynomials in the algebra of symmetric functions. Funct. Anal. Appl. 51 (2017), no. 3, 204--220. 

\bibitem{Rains}
E. M. Rains, $BC_n$ symmetric polynomials. Transf. Groups 10 (2005), no. 1, 63--132.

\bibitem{Sahi1}
S. Sahi, The spectrum of certain invariant differential operators associated to Hermitian
symmetric spaces. In: Lie theory and geometry, J.-L. Brylinski et al. (Eds.), Progress
Math. vol. 123. Birkh\"auser, Boston, 1994, 569--576.

\bibitem{Sahi2}
S. Sahi, Interpolation, integrality, and a generalization of Macdonald's polynomials. Intern. Math. Research Notices 1996, no. 10, 457--471.


\end{thebibliography}
\end{document}